\title{A blueprint for the Formalization of Seymour's Matroid Decomposition Theorem}
\author[Sergeev et al.]{Ivan Sergeev}
\address{Ivan Sergeev, Institute of Science and Technology Austria}
\email{ivan.sergeev@ist.ac.at}
\author[]{Martin Dvorak}
\address{Martin Dvorak, Institute of Science and Technology Austria}
\email{martin.dvorak@ista.ac.at}
\author[]{Cameron Rampell}
\address{Cameron Rampell}
\email{cameronrampell@gmail.com}
\author[]{Mark Sandey}
\address{Mark Sandey}
\email{mark@sandey-family.com}
\author[]{Pietro Monticone}
\address{Pietro Monticone, University of Trento}
\email{pit.monticone@gmail.com}
\date{\today}
\newtheorem{theorem}{Theorem}
\theoremstyle{definition}
\newtheorem{definition}[theorem]{Definition}
\newtheorem{lemma}[theorem]{Lemma}
\newtheorem{corollary}[theorem]{Corollary}
\newcommand{\SeeLean}{See implementation in Lean.}
\newcommand{\lean}[1]{}
\newcommand{\discussion}[1]{}
\newcommand{\leanok}{}
\NewDocumentCommand{\uses}{m}
 {\clist_map_inline:nn{#1}{\vphantom{\ref{##1}}}%
  \ignorespaces}
\NewDocumentCommand{\proves}{m}
 {\clist_map_inline:nn{#1}{\vphantom{\ref{##1}}}%
  \ignorespaces}
\begin{document}
\maketitle
\begin{abstract}
This document is a blueprint for the formalization in Lean of the structural theory of regular matroids underlying Seymour’s decomposition theorem.  
We present a modular account of regularity via totally unimodular representations, show that regularity is preserved under $1$-, $2$-, and $3$-sums, and establish regularity for several special classes of matroids, including graphic, cographic, and the matroid $R_{10}$.  

The blueprint records the logical structure of the proof, the precise dependencies between results, and their correspondence with Lean declarations.  
It is intended both as a guide for the ongoing formalization effort and as a human-readable reference for the organization of the proof.
\end{abstract}

\tableofcontents
%

\section{Introduction}

Seymour's decomposition theorem provides a structural characterization of regular matroids by expressing them as iterated $1$-, $2$-, and $3$-sums of graphic matroids, cographic matroids, and a single exceptional matroid $R_{10}$.  
This result lies at the intersection of matroid theory, linear optimization, and combinatorial geometry, and it plays a central role in the theory of totally unimodular matrices and polynomial-time algorithms. Throughout this blueprint, we primarily work with finite matroids. Several results extend to matroids of finite rank or to infinite matroids, but these generalizations are not pursued systematically here.

Our presentation of the structural theory of regular matroids closely follows the exposition and
terminology of Truemper's monograph~\cite{Truemper}, which serves as a primary reference for the
matroid theory and the matrix-based approach adopted throughout this blueprint. We thank Klaus Truemper for helpful correspondences about the regularity of the 3-sum.

The present document is a \emph{blueprint} for the formalization of this theory in the Lean4 proof assistant.  
Rather than presenting a traditional mathematical exposition, the blueprint records the logical structure of the proof, isolates intermediate results into modular components, and tracks the precise dependencies between statements.  
Each definition, lemma, and theorem is intended to correspond to a Lean declaration, and many proofs are deferred to Lean and indicated as such.

The blueprint is organized into several thematic parts.  
We begin by developing the necessary background on totally unimodular matrices, pivoting operations, and vector matroids.  
We then prove that regularity is preserved under $1$-, $2$-, and $3$-sums of matroids.  
Finally, we establish regularity for certain special matroids -- graphic matroids, cographic matroids, and the matroid $R_{10}$ -- thereby completing the ingredients needed for Seymour's decomposition.

\section{Preliminaries}

\subsection{Total Unimodularity}

\begin{definition}
    \label{Matrix}
    \leanok
    Matrix is a function that takes a row index and returns a vector, which is a function that takes a column index and returns a value.
    The former aforementioned identity is definitional, the latter is syntactical.
    By abuse of notation $\left(R^Y\right)^X \equiv R^{X \times Y}$ we do not curry functions in this text.
    When a matrix happens to be finite (that is, both $X$ and $Y$ are finite) and its entries are numeric, we like to represent it by a table of numbers.
\end{definition}

\begin{definition}
    \label{Matrix.det}
    \uses{Matrix}
    \leanok
    Let $A$ be a square matrix over a commutative ring whose rows and columns are indexed by the integers $\{1, \dots, n\}$.
    The determinant of $A$ is
    \[
        \det A = \sum_{\sigma \in S_{n}} \left( \operatorname{sgn}(\sigma) \prod_{i = 1}^{n} a_{i, \sigma(i)} \right),
    \]
    where the sum is computed over all permutations $\sigma \in S_{n}$, $\operatorname{sgn}(\sigma)$ denotes the sign of permutation $\sigma$,
    and $a_{i,j} \in R$ is the element of $A$ corresponding to the $i$-th row and the $j$-th column.
\end{definition}

\begin{definition}
    \label{Matrix.IsTotallyUnimodular}
    \uses{Matrix.det}
    \leanok
    Let $R$ be a commutative ring. We say that a matrix $A \in R^{X \times Y}$ is totally unimodular, or TU for short, if for every $k \in \mathbb{N}$, every (not necessarily contiguous) $k \times k$ submatrix $T$ of $A$ has $\det T \in \{0, \pm 1\}$.
\end{definition}

\begin{lemma}
    \label{Matrix.IsTotallyUnimodular.mul_rows}
    \uses{Matrix.IsTotallyUnimodular}
    \leanok
    Let $A$ be a TU matrix. Suppose rows of $A$ are multiplied by $\{0, \pm 1\}$ factors. Then the resulting matrix $A'$ is also TU.
\end{lemma}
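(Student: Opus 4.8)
The plan is to reduce the claim to the homogeneity of the determinant in each row together with the fact that $\{0,\pm 1\}$ is closed under multiplication. Write $A' \in R^{X\times Y}$ for the matrix obtained from $A$ by multiplying row $x$ by a scalar $c_x \in \{0,\pm 1\}$, for each $x \in X$. Fix $k \in \mathbb{N}$ and an arbitrary $k\times k$ submatrix $T'$ of $A'$, cut out by an injective row selector $f\colon \{1,\dots,k\}\to X$ and an injective column selector $g\colon\{1,\dots,k\}\to Y$, so that the $(i,j)$ entry of $T'$ equals $c_{f(i)}\,A_{f(i),g(j)}$. Let $T$ be the $k\times k$ submatrix of $A$ cut out by the same $f$ and $g$; then $T'$ is precisely $T$ with its $i$-th row scaled by $c_{f(i)}$, for each $i$.

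First I would record the single-row homogeneity of the determinant: scaling one row of a square matrix by a scalar $c$ multiplies its determinant by $c$. This is immediate from Definition~\ref{Matrix.det}, since each summand of the Leibniz expansion contains exactly one factor drawn from any fixed row; in Lean it follows by iterating the standard single-row determinant-scaling lemma. Applying it once for every $i\in\{1,\dots,k\}$ yields
\[
    \det T' = \Bigl(\prod_{i=1}^{k} c_{f(i)}\Bigr)\det T .
\]

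Finally, since $A$ is TU (Definition~\ref{Matrix.IsTotallyUnimodular}), $\det T\in\{0,\pm 1\}$; each $c_{f(i)}$ lies in $\{0,\pm 1\}$ by hypothesis, and $\{0,\pm 1\}$ is closed under multiplication, so $\prod_{i=1}^{k} c_{f(i)}\in\{0,\pm 1\}$, and hence $\det T'\in\{0,\pm 1\}$ as well. Since $k$ and $T'$ were arbitrary, $A'$ is TU. The only real bookkeeping is the interchange between ``scale the rows of $A$, then extract a submatrix'' and ``extract the submatrix of $A$, then scale its rows,'' together with the iterated use of single-row homogeneity; neither step presents a genuine obstacle. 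One could alternatively write $A' = DA$ for $D$ the diagonal matrix with entries $c_x$, but since products of TU matrices need not be TU this reduces back to the determinant computation above, so the direct argument is the cleanest route.
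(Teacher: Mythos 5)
Your proof is correct and proceeds along the same lines as the paper's: fix a $k\times k$ submatrix $T'$ of $A'$, let $T$ be the corresponding submatrix of $A$, and relate the two determinants. The only organizational difference is that the paper splits into cases --- if some scaling factor applied to a row of $T$ is $0$, then $T'$ has a zero row and $\det T' = 0$; otherwise all the relevant factors are $\pm 1$, and since there are finitely many rows, $\det T' = \pm\det T$ --- whereas you treat both situations uniformly via the identity $\det T' = \bigl(\prod_{i} c_{f(i)}\bigr)\det T$ followed by the observation that $\{0,\pm1\}$ is closed under multiplication. Your route is marginally tidier, since it avoids the case distinction and the appeal to finiteness of the number of sign flips; both arguments are sound, and your remark that the diagonal-matrix reformulation $A' = DA$ buys nothing is apt.
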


\begin{proof}
    \uses{Matrix.IsTotallyUnimodular}
    \leanok
    We prove that $A'$ is TU by Definition~\ref{Matrix.IsTotallyUnimodular}. To this end, let $T'$ be a square submatrix of $A'$. Our goal is to show that $\det T' \in \{0, \pm 1\}$. Let $T$ be the submatrix of $A$ that represents $T'$ before pivoting. If some of the rows of $T$ were multiplied by zeros, then $T'$ contains zero rows, and hence $\det T' = 0$. Otherwise, $T'$ was obtained from $T$ by multiplying certain rows by $-1$. Since $T'$ has finitely many rows, the number of such multiplications is also finite. Since multiplying a row by $-1$ results in the determinant getting multiplied by $-1$, we get $\det T' = \pm \det T \in \{0, \pm 1\}$ as desired.
\end{proof}

\begin{lemma}
    \label{Matrix.IsTotallyUnimodular.mul_cols}
    \uses{Matrix.IsTotallyUnimodular}
    \leanok
    Let $A$ be a TU matrix. Suppose columns of $A$ are multiplied by $\{0, \pm 1\}$ factors. Then the resulting matrix $A'$ is also TU.
\end{lemma}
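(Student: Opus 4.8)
The plan is to deduce this from Lemma~\ref{Matrix.IsTotallyUnimodular.mul_rows} by transposition. The key observation is that a matrix $A$ is TU if and only if its transpose $A^\top$ is TU: every $k \times k$ submatrix $S$ of $A^\top$ is the transpose of a $k \times k$ submatrix $T$ of $A$ (swap the roles of the chosen row indices and column indices), and $\det S = \det T^\top = \det T$, so the two conditions impose the very same set of determinant constraints. Consequently, scaling the columns of $A$ by $\{0, \pm 1\}$ factors is the same operation as scaling the rows of $A^\top$ by those factors and then transposing back.

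Concretely, I would proceed as follows. First, establish (or invoke, if already available) the auxiliary fact that $A$ is TU iff $A^\top$ is TU. Second, given column-scaling factors $(c_y)_{y \in Y} \in \{0,\pm1\}^Y$, observe that the matrix $A'$ with $a'_{x,y} = c_y\, a_{x,y}$ satisfies $\bigl((A')^\top\bigr)_{y,x} = c_y\,(A^\top)_{y,x}$; that is, $(A')^\top$ is obtained from $A^\top$ by multiplying each row $y$ by the factor $c_y$. Third, apply Lemma~\ref{Matrix.IsTotallyUnimodular.mul_rows} to $A^\top$ to conclude that $(A')^\top$ is TU. Fourth, transpose once more to conclude that $A'$ is TU.

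Alternatively, if the transpose-invariance of TU is not yet set up, the argument of Lemma~\ref{Matrix.IsTotallyUnimodular.mul_rows} mirrors verbatim: take a square submatrix $T'$ of $A'$ and let $T$ be the corresponding submatrix of $A$ before scaling; if some selected column was multiplied by $0$, then $T'$ has a zero column and $\det T' = 0$; otherwise $T'$ arises from $T$ by multiplying finitely many columns by $-1$, each such operation negates the determinant, so $\det T' = \pm \det T \in \{0, \pm 1\}$ as desired.

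There is no genuine obstacle here; the only mild care needed is the bookkeeping that column-scaling commutes with passing to submatrices (the scaled submatrix of $A$ equals the submatrix of the scaled $A$), and, on the Lean side, ensuring the transpose-invariance lemma is phrased so that the reduction is essentially a one-liner rather than requiring the index-swapping argument to be redone by hand.
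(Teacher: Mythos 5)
Your proposal is correct and matches the paper's proof, which is exactly the one-line reduction ``apply Lemma~\ref{Matrix.IsTotallyUnimodular.mul_rows} to $A^{\top}$.'' You simply spell out the supporting facts (TU is transpose-invariant, and column-scaling commutes with transposition) that the paper leaves implicit.
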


\begin{proof}
    \uses{Matrix.IsTotallyUnimodular,Matrix.IsTotallyUnimodular.mul_rows}
    \leanok
    Apply Lemma~\ref{Matrix.IsTotallyUnimodular.mul_rows} to $A^{\top}$.
\end{proof}

\begin{definition}
    \label{Matrix.IsPartiallyUnimodular}
    \uses{Matrix.det}
    \leanok
    Given $k \in \mathbb{N}$, we say that a matrix $A$ is $k$-partially unimodular, or $k$-PU for short, if every (not necessarily contiguous, not necessarily injective) $k \times k$ submatrix $T$ of $A$ has $\det T \in \{0, \pm 1\}$.
\end{definition}

\begin{lemma}
    \label{Matrix.isTotallyUnimodular_iff_forall_isPartiallyUnimodular}
    \uses{Matrix.IsTotallyUnimodular,Matrix.IsPartiallyUnimodular}
    \leanok
    A matrix $A$ is TU if and only if $A$ is $k$-PU for every $k \in \mathbb{N}$.
\end{lemma}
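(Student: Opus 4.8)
The plan is to prove the two implications separately; the content is minimal, since the two notions differ only in whether the index maps defining a $k \times k$ submatrix are required to be injective, together with the standard fact that a square matrix with a repeated row or column has determinant $0$.

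For the implication from ``$k$-PU for all $k$'' to TU, I would take an arbitrary $k \in \mathbb{N}$ and an arbitrary $k \times k$ submatrix $T$ of the kind appearing in Definition~\ref{Matrix.IsTotallyUnimodular}, observe that it is also one of the (more permissive) submatrices considered in Definition~\ref{Matrix.IsPartiallyUnimodular}, and conclude $\det T \in \{0, \pm 1\}$ directly from $k$-partial unimodularity. This direction is immediate.

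For the converse, I would fix $k \in \mathbb{N}$ and let $T$ be an arbitrary $k \times k$ submatrix of $A$, given by a row-index map $f$ and a column-index map $g$ with no injectivity assumed. I would then split into cases: if $f$ is not injective then $T$ has two equal rows and $\det T = 0$, and similarly if $g$ is not injective; and if both $f$ and $g$ are injective then $T$ is (up to a row/column permutation, which only flips the sign of the determinant) a submatrix of the type occurring in Definition~\ref{Matrix.IsTotallyUnimodular}, so total unimodularity of $A$ gives $\det T \in \{0, \pm 1\}$, a set closed under negation. In all cases $\det T \in \{0, \pm 1\}$, so $A$ is $k$-PU, and since $k$ was arbitrary we are done.

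I expect the only point requiring any care to be the non-injective case above, which relies on the (routine) fact that a matrix over a commutative ring with two equal rows or columns has determinant $0$ — provable straight from Definition~\ref{Matrix.det} using the transposition that swaps the two coinciding indices. No genuine obstacle is anticipated; in effect this lemma is a repackaging of the characterization of total unimodularity via arbitrary (not necessarily injective) square submatrices.
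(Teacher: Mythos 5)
Your proposal is correct and takes essentially the same approach as the paper, which simply states that the lemma ``follows from Definitions~\ref{Matrix.IsTotallyUnimodular} and~\ref{Matrix.IsPartiallyUnimodular}''; you have merely spelled out the details, correctly identifying the only nontrivial point as the non-injective case, which is handled by the determinant of a matrix with repeated rows or columns being zero.
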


\begin{proof}
    \uses{Matrix.IsTotallyUnimodular,Matrix.IsPartiallyUnimodular}
    \leanok
    This follows from Definitions~\ref{Matrix.IsTotallyUnimodular} and~\ref{Matrix.IsPartiallyUnimodular}.
\end{proof}

\subsection{Pivoting}


\begin{definition}
    \label{Matrix.longTableauPivot}
    \uses{Matrix}
    \leanok
    Let $A \in R^{X \times Y}$ be a matrix and let $(x, y) \in X \times Y$ be such that $A (x, y) \neq 0$. A long tableau pivot in $A$ on $(x, y)$ is the operation that maps $A$ to the matrix $A'$ where
    \[
        \forall i \in X, \ \forall j \in Y, \ A' (i, j) = \begin{cases}
            \frac{A (i, j)}{A (x, y)}, & \text{ if } i = x, \\
            A (i, j) - \frac{A (i, y) \cdot A (x, j)}{A (x, y)}, & \text{ if } i \neq x.
        \end{cases}
    \]
\end{definition}

\begin{lemma}
    \label{Matrix.IsTotallyUnimodular.longTableauPivot}
    \uses{Matrix.IsTotallyUnimodular,Matrix.longTableauPivot}
    \leanok
    Let $A \in \mathbb{Q}^{X \times Y}$ be a TU matrix and let $(x, y) \in X \times Y$ be such that $A (x, y) \neq 0$. Then performing the long tableau pivot in $A$ on $(x, y)$ yields a TU matrix.
\end{lemma}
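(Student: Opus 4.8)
The plan is to verify Definition~\ref{Matrix.IsTotallyUnimodular} for the pivoted matrix $A'$ directly. Fix $k \in \mathbb{N}$ and a $k \times k$ submatrix $T'$ of $A'$, given by row indices $R$ and column indices $C$; we must show $\det T' \in \{0, \pm 1\}$. Two preliminary reductions. First, if the chosen rows or columns are not distinct, then $T'$ has two identical rows or two identical columns, so $\det T' = 0$; hence we may assume $R$ and $C$ are honest $k$-element sets. Second, since $A$ is TU and $A(x, y) \neq 0$, we have $A(x, y) = \pm 1$, so the division appearing in Definition~\ref{Matrix.longTableauPivot} is multiplication by $A(x,y)^{-1} = A(x,y) \in \{\pm 1\}$. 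Note also that in $A'$ the pivot column $y$ has become the unit vector $e_x$ (entry $1$ in row $x$, entry $0$ elsewhere) and the pivot row $x$ is $A(x,y)^{-1}$ times the old row $x$ of $A$.

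Now I would split on whether $x \in R$ and whether $y \in C$, giving four cases. \emph{Case $x \notin R$, $y \notin C$.} The entries of $T'$ are $A(i,j) - A(i,y)A(x,j)/A(x,y)$ for $i \in R$, $j \in C$; this is precisely the Schur complement of the pivot entry in the $(k+1) \times (k+1)$ submatrix $S$ of $A$ on rows $R \cup \{x\}$ and columns $C \cup \{y\}$, written with $x$ and $y$ as the bordering row and column. The block-determinant identity then gives $\det S = A(x,y) \cdot \det T'$, so $\det T' = A(x,y)^{-1}\det S = \pm \det S \in \{0, \pm 1\}$ since $S$ is a square submatrix of the TU matrix $A$ (the reordering needed to place $x, y$ first only changes $\det S$ by a sign, which is harmless). \emph{Case $x \notin R$, $y \in C$.} Then column $y$ of $T'$ is identically $0$, so $\det T' = 0$. \emph{Case $x \in R$, $y \in C$.} Column $y$ of $T'$ equals $e_x$, so cofactor expansion along it gives $\det T' = \pm \det\bigl(T'|_{R \setminus \{x\},\, C \setminus \{y\}}\bigr)$, and that minor falls under the first case (or is the $0 \times 0$ determinant $1$ when $k = 1$). \emph{Case $x \in R$, $y \notin C$.} Here $T'$ is obtained from $T := A|_{R, C}$ by scaling row $x$ by $A(x,y)^{-1}$ and then subtracting suitable multiples of the new row $x$ from the remaining rows; elementary row additions preserve the determinant and the scaling multiplies it by $\pm 1$, so $\det T' = \pm \det T \in \{0, \pm 1\}$.

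The step I expect to be the main obstacle is the bordered-matrix / Schur-complement identity $\det\!\begin{pmatrix} a & v^{\top} \\ u & B \end{pmatrix} = a \cdot \det(B - a^{-1} u v^{\top})$ for a unit $a$: both establishing it (or extracting a suitable form from Mathlib, e.g.\ via the $LU$-type factorization of the bordered matrix) and, in the formalized setting, the attendant index bookkeeping — realizing $T'$, $T$, $S$, and the cofactor minors as genuine submatrices of the relevant matrices under explicit reindexings, and tracking the $\pm 1$ signs produced by row/column permutations and cofactor expansions. The remaining ingredients ($A(x,y) \in \{\pm 1\}$ from TU, invariance of the determinant under row additions, cofactor expansion along a unit-vector column) are standard. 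As an alternative to the case split, one can write the pivot globally as $A' = M A$ with $M$ an elementary-type matrix of determinant $A(x,y)^{-1}$, apply the Cauchy--Binet formula to $(MA)|_{R,C}$, observe that the only surviving index sets contribute either $\det(A|_{R,C})$ or terms $\pm A(i,y)\,A(x,y)^{-1}\det\bigl(A|_{(R \setminus \{i\}) \cup \{x\},\, C}\bigr)$, and recognize their combination as $A(x,y)^{-1}$ times the cofactor expansion of a single $(k{+}1) \times (k{+}1)$ minor of $A$; this leads to the same estimate but with more moving parts.
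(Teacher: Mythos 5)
The paper itself gives no written argument here — the proof is simply deferred to the Lean implementation (``See implementation in Lean'') — so there is no text-level proof to compare against. On its own merits, your proof is correct and complete. The reduction to injective index choices is sound (repeated rows or columns give a zero determinant under either reading of ``submatrix''), and the four-way case split on $x \in R$, $y \in C$ covers everything. In the generic case ($x \notin R$, $y \notin C$) the identification of $T'$ with the Schur complement of the pivot entry inside the bordered $(k{+}1)\times(k{+}1)$ minor $S$ of $A$, together with $\det S = A(x,y)\det T'$ up to the sign from reordering, is exactly right; the case $y \in C$, $x \notin R$ gives a zero column; the case $x,y$ both selected reduces by cofactor expansion along the unit column to the generic case; and the case $x \in R$, $y \notin C$ is a row-scaling by $\pm 1$ followed by determinant-preserving row additions (it is fine that the coefficients $A(i,y)$ live outside $T$ — they are still just scalars). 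You also correctly flag the main formalization pain points: the bordered-determinant/Schur identity and the reindexing/sign bookkeeping. This is the standard elementary argument (essentially Truemper's), and the alternative route you sketch via $A' = MA$ and Cauchy--Binet is a legitimate but more cumbersome variant; the case split is the cleaner choice for formalization.
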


\begin{proof}
    \leanok
    \SeeLean
\end{proof}


\begin{definition}
    \label{Matrix.shortTableauPivot}
    \uses{Matrix.longTableauPivot}
    \leanok
    Let $A \in R^{X \times Y}$ be a matrix and let $(x, y) \in X \times Y$ be such that $A (x, y) \neq 0$. Perform the following sequence of operations.
    \begin{enumerate}
        \item Adjoin the identity matrix $1 \in R^{X \times X}$ to $A$, resulting in the matrix $B = \begin{bmatrix} 1 & A \end{bmatrix} \in R^{X \times (X \oplus Y)}$.
        \item Perform a long tableau pivot in $B$ on $(x, y)$, and let $C$ denote the result.
        \item Swap columns $x$ and $y$ in $C$, and let $D$ be the resulting matrix.
        \item Finally, remove columns indexed by $X$ from $D$, and let $A'$ be the resulting matrix.
    \end{enumerate}
    A short tableau pivot in $A$ on $(x, y)$ is the operation that maps $A$ to the matrix $A'$ defined above.
\end{definition}

\begin{lemma}
    \label{Matrix.shortTableauPivot_eq}
    \uses{Matrix.shortTableauPivot}
    \leanok
    Let $A \in R^{X \times Y}$ be a matrix and let $(x, y) \in X \times Y$ be such that $A (x, y) \neq 0$. Then the short tableau pivot in $A$ on $(x, y)$ maps $A$ to $A'$ with
    \[
        \forall i \in X, \ \forall j \in Y, \ A' (i, j) = \begin{cases}
            \frac{1}{A (x, y)}, & \text{ if } i = x \text{ and } j = y, \\
            \frac{A (x, j)}{A (x, y)}, & \text{ if } i = x \text{ and } j \neq y, \\
            -\frac{A (i, j)}{A (x, y)}, & \text{ if } i \neq x \text{ and } j = y, \\
            A (i, j) - \frac{A (i, y) \cdot A (x, j)}{A (x, y)}, & \text{ if } i \neq x \text{ and } j \neq y.
        \end{cases}
    \]
\end{lemma}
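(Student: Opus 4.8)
The plan is to trace a generic entry through the four operations of Definition~\ref{Matrix.shortTableauPivot} and then split into four cases. Write $B = \begin{bmatrix} 1 & A \end{bmatrix} \in R^{X \times (X \oplus Y)}$; its columns come in two flavours, the ``identity columns'' indexed by $X$, on which $B$ takes the value $\delta_{i,j}$ in row $i$ and identity-column $j$, and the ``$A$-columns'' indexed by $Y$, on which $B(i, j) = A(i, j)$. The pivot entry of $B$ is the entry in row $x$ of $A$-column $y$, namely $A(x, y) \neq 0$, so the long tableau pivot of Step~2 is well-defined, and division by $A(x,y)$ is legitimate by hypothesis throughout.

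Applying the formula of Definition~\ref{Matrix.longTableauPivot}, the result $C$ has: in row $x$, the entry $\delta_{x,j}/A(x,y)$ in identity-column $j$ and $A(x,j)/A(x,y)$ in $A$-column $j$; in each row $i \neq x$, the entry $\delta_{i,j} - A(i,y)\,\delta_{x,j}/A(x,y)$ in identity-column $j$ and $A(i,j) - A(i,y)A(x,j)/A(x,y)$ in $A$-column $j$. Step~3 swaps identity-column $x$ with $A$-column $y$, and Step~4 discards every identity column. Consequently, for $j \neq y$ the $j$-th column of $A'$ is the (untouched) $A$-column $j$ of $C$, whereas the $y$-th column of $A'$ is whatever the swap moved into $A$-column $y$, namely the former identity-column $x$ of $C$. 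In symbols, $A'(i,j) = C(i, \text{$A$-column } j)$ for $j \neq y$, and $A'(i,y) = C(i, \text{identity-column } x)$.

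The conclusion then follows by evaluating these at the four combinations of $i = x$ versus $i \neq x$ and $j = y$ versus $j \neq y$: one obtains $\delta_{x,x}/A(x,y) = 1/A(x,y)$, then $A(x,j)/A(x,y)$, then $\delta_{i,x} - A(i,y)\,\delta_{x,x}/A(x,y) = -A(i,y)/A(x,y)$ (using $i \neq x$), and finally $A(i,j) - A(i,y)A(x,j)/A(x,y)$ — exactly the four cases in the statement.

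Mathematically there is no real obstacle here; the proof is pure bookkeeping. The only subtle point — and the main thing to get right in a formalization — is the identification of the columns involved in the swap of Step~3 (``column $x$'' being the identity-block copy of $x$ and ``column $y$'' being the $A$-block copy of $y$), together with the ensuing fact that the $y$-th column of $A'$ inherits its entries from the identity block of $C$ rather than from $A$. That is precisely what produces the off-diagonal cases $1/A(x,y)$ and $-A(i,y)/A(x,y)$. Correctly composing the reindexing maps through the coproduct $X \oplus Y$, and keeping the $\mathrm{inl}$/$\mathrm{inr}$ injections straight, is the fiddly part.
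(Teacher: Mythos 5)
Your proof is correct and carries out in full the ``direct calculation'' that the paper merely asserts: you unwind the four steps of Definition~\ref{Matrix.shortTableauPivot}, propagate a generic entry through the long tableau pivot of Definition~\ref{Matrix.longTableauPivot}, track what the column swap and the deletion of the identity block do to each of the four $(i,j)$-cases, and recover exactly the stated formulas. This is the same approach the paper intends, just written out explicitly, and your emphasis on the $X \oplus Y$ reindexing bookkeeping is precisely the part that matters in the Lean formalization.
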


\begin{proof}
    \leanok
    Follows by direct calculation.
\end{proof}

\begin{lemma}
    \label{Matrix.shortTableauPivot_zero}
    \uses{Matrix.shortTableauPivot}
    \leanok
    Let $B = \begin{bmatrix} B_{11} & 0 \\ B_{21} & B_{22} \end{bmatrix} \in \mathbb{Q}^{\{X_{1} \cup X_{2}\} \times \{Y_{1} \times Y_{2}\}}$. Let $B' = \begin{bmatrix} B_{11}' & B_{12}' \\ B_{21}' & B_{22}' \end{bmatrix}$ be the result of performing a short tableau pivot on $(x, y) \in X_{1} \times Y_{1}$ in $B$. Then $B_{12}' = 0$, $B_{22}' = B_{22}$, and $\begin{bmatrix} B_{11}' \\ B_{21}' \end{bmatrix}$ is the matrix resulting from performing a short tableau pivot on $(x, y)$ in $\begin{bmatrix} B_{11} \\ B_{21} \end{bmatrix}$.
\end{lemma}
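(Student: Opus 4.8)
The plan is to reduce the entire statement to the closed-form entrywise description of the short tableau pivot given in Lemma~\ref{Matrix.shortTableauPivot_eq}, and then to run a short case analysis according to which of the four blocks an index pair $(i,j)$ lies in. Throughout, the only inputs that matter are that the pivot position lies in the north-west block (so $x \in X_1$ and $y \in Y_1$, and in particular $B(x,y) \neq 0$ for the pivot to be defined), and that $B_{12} = 0$, i.e. $B(x', j) = 0$ whenever the row $x'$ lies in $X_1$ and the column $j$ lies in $Y_2$. Once the formula is unfolded, each of the three conclusions is immediate.

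For $B_{12}' = 0$: fix a row $i \in X_1$ and a column $j \in Y_2$; since $y \in Y_1$ we have $j \neq y$. If $i = x$, the formula gives $B'(i,j) = B(x,j)/B(x,y)$, and $B(x,j) = 0$ because $(x,j)$ sits in the $B_{12}$-block. If $i \neq x$, the formula gives $B'(i,j) = B(i,j) - B(i,y)\,B(x,j)/B(x,y)$, and here both $B(i,j)$ and $B(x,j)$ vanish, their rows lying in $X_1$ and their column $j$ in $Y_2$. For $B_{22}' = B_{22}$: fix $i \in X_2$ and $j \in Y_2$; then $i \neq x$ (as $x \in X_1$) and $j \neq y$ (as $y \in Y_1$), so $B'(i,j) = B(i,j) - B(i,y)\,B(x,j)/B(x,y)$, and the factor $B(x,j)$ is an entry of $B_{12}$ and hence zero, leaving $B'(i,j) = B(i,j) = B_{22}(i,j)$.

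For the left half, write $M = \begin{bmatrix} B_{11} \\ B_{21} \end{bmatrix}$, so that $M(i,j) = B(i,j)$ for every row $i$ and every column $j \in Y_1$, and in particular $M(x,y) = B(x,y) \neq 0$, so the short tableau pivot of $M$ on $(x,y)$ is defined. For $j \in Y_1$, every entry referenced by the formula of Lemma~\ref{Matrix.shortTableauPivot_eq} — namely $B(i,j)$, $B(x,j)$, $B(i,y)$, and $B(x,y)$ — has its column index in $Y_1$, hence coincides with the corresponding entry of $M$. Matching the four cases of the formula for $B'$ restricted to columns $Y_1$ against the four cases of the formula for the pivot of $M$ then shows that they agree term by term, which is exactly the claim that $\begin{bmatrix} B_{11}' \\ B_{21}' \end{bmatrix}$ is the short tableau pivot of $\begin{bmatrix} B_{11} \\ B_{21} \end{bmatrix}$ on $(x,y)$.

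There is no genuine mathematical obstacle here: the statement is a bookkeeping consequence of the pivot formula. In the formalization the only mild friction I expect is administrative — handling the disjoint-union index types for $X_1, X_2$ and $Y_1, Y_2$ cleanly, discharging the disjointness facts ``$i \in X_2 \implies i \neq x$'' and ``$j \in Y_2 \implies j \neq y$'' that silently collapse several of the cases, and phrasing the last conclusion so that it is literally an equality of the column-restricted matrix with the pivot of the column-restricted matrix rather than merely an entrywise identity.
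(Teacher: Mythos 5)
Your proof is correct and follows the same route as the paper's: both reduce the claim to the explicit entrywise pivot formula of Lemma~\ref{Matrix.shortTableauPivot_eq} and then perform the block-by-block case check, using $B_{12}=0$ to kill the correction terms. The paper simply compresses the case analysis into a remark, whereas you have spelled it out.
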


\begin{proof}
    \leanok
    This follows by a direct calculation. Indeed, because of the $0$ block in $B$, $B_{12}$ and $B_{22}$ remain unchanged, and since $\begin{bmatrix} B_{11} \\ B_{21} \end{bmatrix}$ is a submatrix of $B$ containing the pivot element, performing a short tableau pivot in it is equivalent to performing a short tableau pivot in $B$ and then taking the corresponding submatrix.
\end{proof}

\begin{lemma}
    \label{shortTableauPivot_submatrix_det_abs_eq_div}
    \uses{Matrix.shortTableauPivot}
    \leanok
    Let $k \in \mathbb{N}$, let $A \in \mathbb{Q}^{k \times k}$, and let $A'$ be the result of performing a short tableau pivot in $A$ on $(x, y)$ with $x, y \in \{1, \dots, k\}$ such that $A (x, y) \neq 0$. Then $A'$ contains a submatrix $A''$ of size $(k - 1) \times (k - 1)$ with $|\det A''| = |\det A| / |A (x, y)|$.
\end{lemma}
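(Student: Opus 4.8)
The plan is to take $A''$ to be the submatrix of $A'$ obtained by deleting row $x$ and column $y$; this is a $(k-1) \times (k-1)$ matrix. By Lemma~\ref{Matrix.shortTableauPivot_eq}, for $i \neq x$ and $j \neq y$ we have $A'(i,j) = A(i,j) - \frac{A(i,y)\,A(x,j)}{A(x,y)}$, so the entries of $A''$ are precisely the Schur-complement entries of $A$ with respect to the pivot $(x,y)$. It therefore remains to prove $|\det A''| = |\det A| / |A(x,y)|$, a statement purely about $A$.

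For this I would reduce to the Schur-complement determinant identity by elementary row operations on $A$ itself. For each row index $i \neq x$, subtract $\frac{A(i,y)}{A(x,y)}$ times row $x$ from row $i$; call the result $\tilde A$. Row operations of this form leave the determinant unchanged, so $\det \tilde A = \det A$. By construction $\tilde A(i,j) = A(i,j) - \frac{A(i,y)\,A(x,j)}{A(x,y)}$ for $i \neq x$ (and $\tilde A(x,j) = A(x,j)$), so in particular column $y$ of $\tilde A$ vanishes everywhere except in row $x$, where its entry is $A(x,y)$. Expanding $\det \tilde A$ along column $y$ by cofactor expansion leaves a single nonzero term, giving $\det \tilde A = (-1)^{x+y}\,A(x,y)\,\det M$, where $M$ is the minor of $\tilde A$ obtained by deleting row $x$ and column $y$. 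But $M$ has exactly the entries $A(i,j) - \frac{A(i,y)\,A(x,j)}{A(x,y)}$ with $i \neq x$ and $j \neq y$, i.e. $M = A''$. Combining, $\det A = \pm\, A(x,y)\,\det A''$, and taking absolute values yields the claim.

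The main obstacle is the index bookkeeping: organizing the deletion of row $x$ and column $y$ consistently between $A'$ and the cofactor expansion of $\tilde A$, and, in Lean, handling the reindexing of $\{1,\dots,k\}$ with one or two elements removed. The sign $(-1)^{x+y}$ is irrelevant, since only $|\det|$ matters, which removes the need to track the permutation sign precisely. For the formalization, the row-operation step can be packaged using the lemmas describing how the determinant behaves when a scalar multiple of one row is added to another, and the final step using the existing cofactor-expansion-along-a-column lemma; alternatively, one could invoke a Schur-complement determinant lemma directly if one is available in the library.
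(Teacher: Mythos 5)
Your proof is correct, and your choice of $A''$ (delete the pivot row and column from $A'$) and the identification of its entries with the Schur-complement entries of $A$ match the paper exactly. Where you diverge is in establishing $|\det A''| = |\det A| / |A(x,y)|$: the paper expands $\det A$ along the pivot row $x$, introduces auxiliary minors $B_j$ and $B_j''$, reorders columns, and then uses multilinearity of the determinant in $\det A''$ to relate the pieces. You instead first apply the determinant-preserving row operations that subtract $\frac{A(i,y)}{A(x,y)}$ times row $x$ from each row $i \neq x$, zeroing out column $y$ outside the pivot, and then do a single cofactor expansion along column $y$; this is the standard Schur-complement determinant argument, arguably cleaner and with less bookkeeping of signs and sub-minors. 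Both routes are valid. For the formalization, your remark that the $(-1)^{x+y}$ is absorbed by the absolute value is exactly right and removes a source of fiddly sign-tracking; the main remaining effort in Lean is, as you note, the reindexing when a row and a column are removed, and locating (or proving) a lemma that expresses the determinant as invariant under adding a scalar multiple of one row to another.
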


\begin{proof}
    \leanok
    Let $X = \{1, \dots, k\} \setminus \{x\}$ and $Y = \{1, \dots, k\} \setminus \{y\}$, and let $A'' = A' (X, Y)$. Since $A''$ does not contain the pivot row or the pivot column, $\forall (i, j) \in X \times Y$ we have $A'' (i, j) = A (i, j) - \frac{A (i, y) \cdot A (x, j)}{A (x, y)}$. For $\forall j \in Y$, let $B_{j}$ be the matrix obtained from $A$ by removing row $x$ and column $j$, and let $B_{j}''$ be the matrix obtained from $A''$ by replacing column $j$ with $A (X, y)$ (i.e., the pivot column without the pivot element). The cofactor expansion along row $x$ in $A$ yields
    \[
        \det A = \sum_{j = 1}^{k} (-1)^{y + j} \cdot A (x, j) \cdot \det B_{j}.
    \]
    By reordering columns of every $B_{j}$ to match their order in $B_{j}''$, we get
    \[
        \det A = (-1)^{x + y} \cdot \left( A (x, y) \cdot \det A' - \sum_{j \in Y} A (x, j) \cdot \det B_{j}'' \right).
    \]
    By linearity of the determinant applied to $\det A''$, we have
    \[
        \det A'' = \det A' - \sum_{j \in Y} \frac{A (x, j)}{A (x, y)} \cdot \det B_{j}''
    \]
    Therefore, $|\det A''| = |\det A| / |A (x, y)|$.
\end{proof}

\begin{lemma}
    \label{Matrix.IsTotallyUnimodular.shortTableauPivot}
    \uses{Matrix.IsTotallyUnimodular,Matrix.shortTableauPivot}
    \leanok
    Let $A \in \mathbb{Q}^{X \times Y}$ be a TU matrix and let $(x, y) \in X \times Y$ be such that $A (x, y) \neq 0$. Then performing the short tableau pivot in $A$ on $(x, y)$ yields a TU matrix.
\end{lemma}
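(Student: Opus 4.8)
The plan is to invoke Lemma~\ref{Matrix.isTotallyUnimodular_iff_forall_isPartiallyUnimodular}: it suffices to show that the pivoted matrix $A'$ is $k$-PU for every $k \in \mathbb{N}$. Fix $k$ and an arbitrary $k \times k$ submatrix $T'$ of $A'$, presented by index functions onto a row set $R$ and a column set $C$. If either index function is non-injective, then $T'$ has two equal rows or columns and $\det T' = 0$, so we may assume $|R| = |C| = k$; permuting rows and columns changes only the sign of the determinant, so we may further assume $T' = A'(R, C)$. Applying total unimodularity of $A$ to the $1 \times 1$ submatrix $\bigl[\, A(x,y)\,\bigr]$ together with $A(x,y) \neq 0$ gives $|A(x,y)| = 1$, a normalization used throughout. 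The target is, in each of the cases below, to produce a (not necessarily contiguous) square submatrix $S$ of $A$ with $|\det T'| = |\det S|$; since $A$ is TU this forces $|\det T'| \in \{0,1\}$ and hence $\det T' \in \{0, \pm 1\}$, which is exactly what Definition~\ref{Matrix.IsPartiallyUnimodular} demands of $A'$.

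The case split is on whether $T'$ meets the pivot row $x$ and the pivot column $y$; in each case I would use the explicit entrywise formula for the short tableau pivot from Lemma~\ref{Matrix.shortTableauPivot_eq}. If $x \notin R$ and $y \notin C$, every entry of $T'$ is $A(i,j) - A(i,y)A(x,j)/A(x,y)$, so $T'$ is the Schur complement of the invertible block $\bigl[\, A(x,y)\,\bigr]$ inside the $(k+1) \times (k+1)$ submatrix $S = A\bigl(\{x\} \cup R,\ \{y\} \cup C\bigr)$ of $A$, and the Schur complement identity gives $|\det S| = |A(x,y)|\,|\det T'| = |\det T'|$. If exactly one of $x \in R$, $y \in C$ holds, then one pass of elementary row operations using the rescaled pivot row (if $x \in R$), or of elementary column operations using the rescaled pivot column (if $y \in C$), turns $T'$ into $A(R,C)$ with a single row or column rescaled by the unit $\pm 1/A(x,y)$, so $S = A(R,C)$ works. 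Finally, if $x \in R$ and $y \in C$, then, since the pivot formula only references the pivot entry, the pivot row, and the pivot column, $T'$ equals the short tableau pivot at $(x,y)$ of the $k \times k$ submatrix $A(R,C)$; a cofactor expansion along the pivot row — essentially the determinant computation of Lemma~\ref{shortTableauPivot_submatrix_det_abs_eq_div}, which can also be quoted directly using that the short tableau pivot at a fixed position is an involution — yields $|\det T'| = \bigl|\det A\bigl(R \setminus \{x\},\ C \setminus \{y\}\bigr)\bigr|$, so $S = A\bigl(R \setminus \{x\}, C \setminus \{y\}\bigr)$ works.

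Since $|\det T'| \in \{0,1\}$ in every case, the proof concludes. I do not expect a single genuinely hard step; the work is bookkeeping — tracking signs through the cofactor expansions and the row/column operations, disposing of the degenerate non-injective submatrices cleanly, and, in the $x \notin R$, $y \notin C$ case, checking that $A\bigl(\{x\} \cup R,\ \{y\} \cup C\bigr)$ really is a $(k+1) \times (k+1)$ (possibly non-contiguous) submatrix of $A$ of the right shape. As a cross-check, there is a shorter route that follows Definition~\ref{Matrix.shortTableauPivot} literally: appending the identity columns to $A$ preserves total unimodularity (expand any mixed minor along the identity columns it uses), a long tableau pivot preserves it by Lemma~\ref{Matrix.IsTotallyUnimodular.longTableauPivot}, and swapping two columns and then deleting a set of columns are both obviously TU-preserving, so the composite $A \mapsto A'$ preserves total unimodularity; the only ingredient of this route not already in the excerpt is the claim about appending the identity.
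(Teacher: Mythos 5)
Your proposal is correct, and your ``cross-check'' paragraph is precisely the paper's proof: since Definition~\ref{Matrix.shortTableauPivot} realizes the short tableau pivot as (adjoin identity) $\circ$ (long tableau pivot) $\circ$ (column swap) $\circ$ (column deletion), the paper simply observes that each of these preserves total unimodularity, citing Lemma~\ref{Matrix.IsTotallyUnimodular.longTableauPivot} for the nontrivial step, plus the standard fact that appending identity columns is TU-preserving (expand any mixed minor along the identity columns).

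Your main argument is a genuinely different, more hands-on route. Rather than going through the long-tableau/identity-augmentation machinery, you fix a $k\times k$ submatrix $T'$ of the pivoted matrix and case-split on whether it meets the pivot row and column. The $x\notin R, y\notin C$ case is handled by the Schur complement identity $\det A(\{x\}\cup R, \{y\}\cup C) = A(x,y)\cdot\det T'$; the ``exactly one'' cases fall to elementary row/column operations that absorb the pivot row/column and leave a single $\pm1/A(x,y)$ rescaling; the $x\in R, y\in C$ case uses that $T'$ is the short tableau pivot of $A(R,C)$ and reduces to a $(k-1)\times(k-1)$ minor via the determinant computation of Lemma~\ref{shortTableauPivot_submatrix_det_abs_eq_div} together with the involution property of the short tableau pivot. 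All four branches land on a submatrix of the TU matrix $A$, as required. The tradeoff is modularity versus self-containedness: the paper's route is a one-liner given Lemma~\ref{Matrix.IsTotallyUnimodular.longTableauPivot}, while yours re-derives the determinant facts but never touches the long tableau pivot or the definitional decomposition at all. One small caveat: the involution property you invoke in the $x\in R, y\in C$ case is not stated in the paper, so in a formalization you would either prove it as a lemma or substitute the direct cofactor expansion you mention as the alternative.
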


\begin{proof}
    \uses{Matrix.IsTotallyUnimodular.longTableauPivot}
    \leanok
    See implementation in Lean, which uses Lemma \ref{Matrix.IsTotallyUnimodular.longTableauPivot}.
\end{proof}

\subsection{Vector Matroids}

\begin{definition}
    \label{Matroid}
    \leanok
    A matroid $M$ is a pair $(E, \mathcal{I})$ where $E$ is a (possibly infinite) set and $\mathcal I \in 2^E$ is such that:
    \begin{enumerate}
        \item $\emptyset \in \mathcal I$
        \item If $I \in \mathcal I$ and $J \subseteq I$, then $I \in \mathcal I$.
        \item If $I \in \mathcal I$ is not maximal (with respect to set inclusion) and $B \in \mathcal I$ is maximal,
            then there exists an $x \in B \setminus I$ such that $I \cup \{x\} \in \mathcal{I}$.
        \item If $X \subseteq E$ and $I \subseteq X$ is such that $I \in \mathcal I$, then there exists an $J \in \mathcal I$ with $I \subseteq J \subseteq X$
            that is maximal with respect to set inclusion.
    \end{enumerate}
    We call $E$ the ground set of $M$ and $\mathcal{I}$ the collection of independent sets in $M$. We say that $B \in \mathcal I$ is a base of $M$ if $B$ is maximal in $\mathcal I$.
\end{definition}

\begin{definition}
    \label{VectorMatroid}
    \uses{Matrix, Matroid}
    \leanok
    Let $R$ be a division ring, let $X$ and $Y$ be sets, and let $A \in R^{X \times Y}$ be a matrix. The vector matroid of $A$ is the matroid $M = (Y, \mathcal{I})$ where a set $I \subset Y$ is independent in $M$ if and only if the columns of $A$ indexed by $I$ are linearly independent.
\end{definition}

\begin{definition}
    \label{StandardRepr}
    \uses{VectorMatroid}
    \leanok
    Let $R$ be a division ring, let $X$ and $Y$ be disjoint sets, and let $S \in R^{X \times Y}$ be a matrix. Let $A = \begin{bmatrix} 1 & S \end{bmatrix} \in R^{X \times (X \cup Y)}$ be the matrix obtained from $S$ by adjoining the identity matrix as columns, and let $M$ be the vector matroid of $A$. Then $S$ is called the standard representation of $M$.
\end{definition}

\begin{lemma}
    \label{StandardRepr.toMatroid_isBase_X}
    \uses{StandardRepr}
    \leanok
    Let $S \in R^{X \times Y}$ be a standard representation of a vector matroid $M$. Then $X$ is a base in $M$.
\end{lemma}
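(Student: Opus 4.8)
The plan is to verify directly that $X$ meets the two conditions in Definition~\ref{Matroid} for being a base of $M$ — that $X$ is independent, and that $X$ is maximal among independent sets. For independence, I would use that, by the construction in Definition~\ref{StandardRepr}, the restriction of $A = \begin{bmatrix} 1 & S \end{bmatrix} \in R^{X \times (X \cup Y)}$ to the columns indexed by (the copy of) $X$ is exactly the identity matrix $1 \in R^{X \times X}$. Its columns are the standard basis vectors $e_x \in R^X$ for $x \in X$, which are linearly independent over any division ring; hence $X \in \mathcal{I}$ by Definition~\ref{VectorMatroid}.

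For maximality, I would take an arbitrary $I \in \mathcal{I}$ with $X \subseteq I \subseteq X \cup Y$ and show $I = X$. If there were some $y \in I \setminus X$, then $y \in Y$, and the column $A(\cdot, y)$ is the $y$-th column $S(\cdot, y) \in R^X$ of $S$, which equals the finite linear combination $\sum_{x \in X} S(x, y)\, e_x$ of the columns of $A$ indexed by $X$. Thus the columns of $A$ indexed by $X \cup \{y\}$ are linearly dependent, and therefore so are those indexed by any superset, in particular those indexed by $I$ — contradicting $I \in \mathcal{I}$. Hence $I = X$, so $X$ is a maximal independent set, i.e., a base of $M$.

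The underlying mathematics is elementary linear algebra, so I expect the only real work to be formalization bookkeeping: identifying the $X$-indexed block of $\begin{bmatrix} 1 & S \end{bmatrix}$ with the identity matrix under the disjoint-union indexing of $X \cup Y$, unfolding the columns of $A$ at indices in $Y$ as columns of $S$, and driving Mathlib's linear-independence API. It is also worth noting that finiteness of $X$ (the blueprint's standing assumption) is exactly what makes $S(\cdot, y)$ a finite combination of standard basis vectors and hence guarantees maximality; for genuinely infinite ground sets the statement would need to be reconsidered.
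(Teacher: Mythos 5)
Your proposal is correct and is the standard, essentially unique argument for this lemma; the paper defers the proof to Lean, but the Lean proof necessarily proceeds the same way — independence of $X$ via the identity block, maximality via the observation that every column of $S$ lies in the span of the identity columns. Your closing remark about finiteness of $X$ is accurate and worth keeping in mind: without it the maximality step fails (e.g.\ with $X$ countably infinite and a single all-ones column $S(\cdot,y)$, no finite linear dependence exists among the columns indexed by $X \cup \{y\}$), so the Lean statement does carry a finiteness hypothesis, consistent with the blueprint's standing restriction to finite matroids.
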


\begin{proof}
    \leanok
    \SeeLean
\end{proof}




\begin{lemma}
    \label{Matrix.fromRows_zero_reindex_toMatroid}
    \uses{VectorMatroid}
    \leanok
    Adding extra zero rows to a full representation matrix of a vector matroid does not change the matroid.
\end{lemma}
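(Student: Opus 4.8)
The plan is to reduce the statement to the elementary fact that linear (in)dependence of a family of vectors is unaffected by appending coordinates that are identically zero. Concretely, write the enlarged matrix as $A' \in R^{(X \oplus X') \times Y}$ whose block of rows indexed by $X'$ vanishes, so that, after the reindexing that identifies the new row index set with $X \oplus X'$, we have $A' = \begin{bmatrix} A \\ 0 \end{bmatrix}$. By Definition~\ref{VectorMatroid} the two matroids have the same ground set $Y$, so by extensionality for matroids it suffices to prove that a set $I \subseteq Y$ indexes a linearly independent family of columns of $A'$ if and only if it indexes a linearly independent family of columns of $A$.

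For that equivalence I would argue directly from the definition of linear independence. A coefficient family $c : I \to R$ witnesses a dependence among the columns of $A'$ indexed by $I$ precisely when $\sum_{i \in I} c_i \cdot A'(i\text{-th column}) = 0$ as an element of $R^{X \oplus X'}$; since equality in a product is checked componentwise and every entry of $A'$ in a row of $X'$ is $0$, this is equivalent to $\sum_{i \in I} c_i \cdot A(i\text{-th column}) = 0$ in $R^{X}$. Hence the set of dependence witnesses is literally the same for $A'$ and for $A$, and in particular one family is linearly independent iff the other is. In Lean this is most cleanly phrased via the statement that the columns of $\begin{bmatrix} A \\ 0 \end{bmatrix}$, viewed through the canonical linear isomorphism $R^{X \oplus X'} \cong R^{X} \times R^{X'}$ as living in $R^{X} \times \{0\}$, are linearly independent iff the columns of $A$ are; a lemma of this shape about \texttt{Matrix.fromRows}/\texttt{Sum.elim} and \texttt{LinearIndependent} may already be available in Mathlib.

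Finally I would assemble the pieces: unfold both \texttt{toMatroid} applications, rewrite the independence predicate using the column-independence equivalence just established, and close the goal with the extensionality principle for matroids (equal ground sets, equal independence predicates). The reindexing is pure bookkeeping — one transports the zero-row block along the given equivalence on row indices and observes that reindexing rows is a linear isomorphism on each column, hence preserves linear independence — but it is the part most likely to generate friction in the formalization, because it forces one to track the precise index types and how \texttt{reindex} interacts with \texttt{fromRows} and with column restriction. I do not expect any genuine mathematical difficulty beyond that.
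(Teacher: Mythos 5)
Your proposal is correct, and since the paper defers this proof entirely to the Lean implementation, there is no written argument to compare against; your reduction to the elementary fact that appending identically-zero coordinates does not affect linear (in)dependence of a family of vectors is exactly the natural argument one would expect the formalization to carry out.

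Two small points to keep in mind when formalizing. First, the statement as written in the paper speaks of a "full representation matrix," and the Lean lemma name (\texttt{fromRows\_zero\_reindex\_toMatroid}) suggests the Lean statement also involves a \texttt{reindex} of the row type; you have correctly flagged that the reindexing is where most of the friction will be, since linear independence of columns must be shown invariant under the row-index equivalence before the $\begin{bmatrix} A \\ 0 \end{bmatrix}$ picture applies. Second, it is worth being explicit that the matroid-extensionality step you invoke needs equality of ground sets \emph{and} of the independence predicate as a predicate on all subsets of the ground set (not just on sets of some bounded size), which your column-independence equivalence does deliver since it is uniform in $I$. Neither of these is a gap in the mathematics — just the bookkeeping you already anticipated.
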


\begin{proof}
    \leanok
    \SeeLean
\end{proof}

\begin{lemma}
    \label{VectorMatroid.exists_standardRepr_isBase_isTotallyUnimodular}
    \uses{Matrix.IsTotallyUnimodular,VectorMatroid,StandardRepr}
    \leanok
    Let $A \in \mathbb{Q}^{X \times Y}$ be a TU matrix, let $M$ be the vector matroid of $A$, and let $B$ be a base of $M$. Then there exists a matrix $S \in \mathbb{Q}^{B \times (Y \setminus B)}$ such that $S$ is TU and $S$ is a standard representation of $M$.
\end{lemma}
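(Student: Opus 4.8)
The plan is to build the standard representation explicitly from the coordinates of the columns of $A$ with respect to the basis of the column space $\operatorname{col}(A) \subseteq \mathbb{Q}^{X}$ provided by $B$, and then to verify total unimodularity through a Cramer-type determinant identity. First I would set $r := |B|$, which is finite because $B \subseteq Y$ and $Y$ is finite. Since $B$ is a base of $M$, the columns $\{A(\cdot,b)\}_{b \in B}$ are linearly independent and span $\operatorname{col}(A)$, so this space has dimension $r$; hence there is a subset $X_{0} \subseteq X$ with $|X_{0}| = r$ for which the $r \times r$ submatrix $P := A(X_{0}, B)$ is invertible. As $P$ is a square submatrix of the TU matrix $A$, Definition~\ref{Matrix.IsTotallyUnimodular} gives $\det P \in \{0, \pm 1\}$, and invertibility forces $\det P = \pm 1$; in particular $P^{-1}$ is an integer matrix with $\det P^{-1} = \pm 1$.

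Next I would define $\widehat{A} := P^{-1} \cdot A(X_{0}, \cdot) \in \mathbb{Q}^{B \times Y}$ and set $S := \widehat{A}(\cdot, Y \setminus B)$. Restricting $\widehat{A}$ to the columns indexed by $B$ gives $\widehat{A}(\cdot, B) = P^{-1} P = 1$, so $\widehat{A} = \begin{bmatrix} 1 & S \end{bmatrix}$, and by Definition~\ref{StandardRepr} the matrix $S$ is a standard representation of the vector matroid of $\widehat{A}$. That this matroid equals $M$ follows because the assignment $v \mapsto P^{-1}\, v(X_{0})$ sends $A(\cdot, b) \mapsto e_{b}$ for each $b \in B$ and is therefore a linear isomorphism $\operatorname{col}(A) \to \mathbb{Q}^{B}$ carrying each column $A(\cdot, y)$ to the column $\widehat{A}(\cdot, y)$; a family of columns of $A$ is linearly independent precisely when the corresponding family of columns of $\widehat{A}$ is, so the two vector matroids have the same independent sets. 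This step is where the passage from $A$ to $A(X_{0}, \cdot)$ gets justified; it is the natural generalization of Lemma~\ref{Matrix.fromRows_zero_reindex_toMatroid} from deleting zero rows to deleting linearly redundant rows.

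It remains to prove $S$ is TU, and since $S$ is a submatrix of $\widehat{A}$ it suffices to prove $\widehat{A}$ is TU. Let $T = \widehat{A}(R, C)$ be an arbitrary $k \times k$ submatrix with $R \subseteq B$ and $C \subseteq Y$. If $C$ meets $B \setminus R$, pick $y \in C \cap (B \setminus R)$: the $y$-th column of $\widehat{A}$ is $e_{y}$, which is zero on the rows $R$, so $T$ has a zero column and $\det T = 0$. Otherwise $C' := C \sqcup (B \setminus R) \subseteq Y$ has exactly $r$ elements; ordering the rows of $\widehat{A}(B, C')$ as $R$ then $B \setminus R$ and its columns as $C$ then $B \setminus R$, the columns indexed by $B \setminus R$ form a zero block over the rows $R$ and the identity over the rows $B \setminus R$, so $\widehat{A}(B, C')$ is block triangular with diagonal blocks $T$ and $1$, whence $|\det \widehat{A}(B, C')| = |\det T|$. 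On the other hand $\widehat{A}(B, C') = P^{-1} \cdot A(X_{0}, C')$ with $A(X_{0}, C')$ an $r \times r$ submatrix of the TU matrix $A$, so $|\det T| = |\det P^{-1}|\cdot|\det A(X_{0}, C')| = |\det A(X_{0}, C')| \in \{0, 1\}$. Thus every square submatrix of $\widehat{A}$ has determinant in $\{0, \pm 1\}$, so $\widehat{A}$, and hence $S$, is TU.

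I expect the main obstacle to be the preliminary linear algebra over a row index set $X$ that need not be finite: producing the full-rank subset $X_{0}$, setting up the coordinate isomorphism $\operatorname{col}(A) \cong \mathbb{Q}^{B}$, and carrying the relabelling of row indices from $X$ to $B$ through the matroid-preservation argument cleanly. By comparison, the determinant computation in the last paragraph is routine cofactor and permutation-sign bookkeeping once the block structure has been exposed.
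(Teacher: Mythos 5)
Your proposal is correct, and it takes a genuinely different route from the paper's (Lean) proof. The paper reaches the standard form by iterating long tableau pivots, invoking Lemma~\ref{Matrix.IsTotallyUnimodular.longTableauPivot} at each step to see that TU is preserved, and then uses Lemma~\ref{Matrix.fromRows_zero_reindex_toMatroid} to drop the zero rows that remain after elimination. You instead pick a row set $X_{0}$ with $P := A(X_{0}, B)$ invertible (possible since the $B$-columns have full rank $r$), note $\det P = \pm 1$ by TU, and form $\widehat{A} = P^{-1} A(X_{0}, \cdot)$ in a single multiplication; you then verify TU of $\widehat{A}$ by a one-shot Cramer-type argument -- completing any $k \times k$ submatrix $T = \widehat{A}(R, C)$ (with $C$ disjoint from $B \setminus R$, else a zero column forces $\det T = 0$) to the full $r \times r$ submatrix $\widehat{A}(B, C \sqcup (B \setminus R))$, which is block triangular with blocks $T$ and $I$, so $|\det T| = |\det P^{-1}| \cdot |\det A(X_{0}, C')| \in \{0,1\}$. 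Both arguments are correct, and they are two faces of the same elimination: the paper's version computes $P^{-1}$ step by step and is naturally suited to Lean because it reuses an existing invariant-preservation lemma, whereas your global argument is mathematically cleaner but needs more up-front linear algebra (existence of the full-rank row set $X_{0}$, the identification $\operatorname{col}(A) \cong \mathbb{Q}^{B}$, and the block-triangular determinant bookkeeping) and a matroid-equivalence step that genuinely extends Lemma~\ref{Matrix.fromRows_zero_reindex_toMatroid} from deleting zero rows to deleting linearly redundant ones, as you observe. Your isomorphism argument for that extension is sound: the restriction $v \mapsto v(X_{0})$ is surjective onto $\mathbb{Q}^{X_{0}}$ on the column space (because $A(X_0,B)$ is invertible), hence an isomorphism by dimension count, so composing with $P^{-1}$ preserves linear dependence of column families.
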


\begin{proof}
    \uses{Matrix.IsTotallyUnimodular.longTableauPivot,Matrix.fromRows_zero_reindex_toMatroid}
    \leanok
    See Lean implementation, which uses Lemmas \ref{Matrix.IsTotallyUnimodular.longTableauPivot} and \ref{Matrix.fromRows_zero_reindex_toMatroid}.
\end{proof}


\begin{definition}
    \label{Matrix.support}
    \uses{Matrix}
    \leanok
    Let $R$ be a magma containing zero. The support of matrix $A \in R^{X \times Y}$ is $A^{\#} \in \{0, 1\}^{X \times Y}$ given by
    \[
        \forall i \in X, \ \forall j \in Y, \ A^{\#} (i, j) = \begin{cases}
            0, & \text{ if } A (i, j) = 0, \\
            1, & \text{ if } A (i, j) \neq 0.
        \end{cases}
    \]
\end{definition}

\begin{lemma}
    \label{Matrix.support_transpose}
    \uses{Matrix.support}
    \leanok
    Transpose of a support matrix is equal to a support of the transposed matrix.
\end{lemma}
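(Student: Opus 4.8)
The plan is to prove the identity $(A^{\#})^{\top} = (A^{\top})^{\#}$ by a direct entrywise comparison, appealing to Definition~\ref{Matrix.support} and the definition of matrix transpose. Fix a matrix $A \in R^{X \times Y}$ over a magma with zero. Both sides of the claimed equality are matrices in $\{0,1\}^{Y \times X}$, so by extensionality it suffices to fix indices $i \in Y$ and $j \in X$ and show the two entries agree.

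First I would unfold the left-hand side: by definition of transpose, $(A^{\#})^{\top}(i,j) = A^{\#}(j,i)$, and by Definition~\ref{Matrix.support} this equals $0$ when $A(j,i) = 0$ and $1$ when $A(j,i) \neq 0$. Then I would unfold the right-hand side: by Definition~\ref{Matrix.support}, $(A^{\top})^{\#}(i,j)$ equals $0$ when $A^{\top}(i,j) = 0$ and $1$ when $A^{\top}(i,j) \neq 0$; since $A^{\top}(i,j) = A(j,i)$, this is the same case split. Hence the two entries coincide for every $(i,j)$, and the matrices are equal.

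There is no real obstacle here: the statement is definitional once the conventions are in place. The only point requiring a modicum of care — and the one most likely to generate friction in the Lean formalization — is bookkeeping the index types, namely that transposition swaps the roles of $X$ and $Y$ so that both sides genuinely live in $\{0,1\}^{Y \times X}$, and that the case distinction "$=0$ versus $\neq 0$" commutes with transposition because it only inspects a single entry. Accordingly, in Lean I expect this to reduce to `funext` twice followed by unfolding `Matrix.support` and `Matrix.transpose` and a `split`/`simp` on the equality `A j i = 0`.
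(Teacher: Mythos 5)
Your proof is correct and takes the same route as the paper, which simply records the result as a definitional equality; you have merely written out the entrywise unfolding that justifies that one-liner.
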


\begin{proof}
    \leanok
    Definitional equality.
\end{proof}

\begin{lemma}
    \label{Matrix.support_submatrix}
    \uses{Matrix.support}
    \leanok
    Submatrix of a support matrix is equal to a support matrix of the submatrix.
\end{lemma}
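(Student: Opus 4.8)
The statement asserts that for any matrix $A \in R^{X \times Y}$ (with $R$ a magma containing zero) and any reindexing maps $f : X' \to X$, $g : Y' \to Y$, we have $\bigl(A^{\#}\bigr)(f,g) = \bigl(A(f,g)\bigr)^{\#}$, where $N(f,g)$ denotes the submatrix $(i,j) \mapsto N(f(i), g(j))$. Since matrices are functions (Definition~\ref{Matrix}), it suffices to prove equality pointwise: fix an arbitrary $(i,j) \in X' \times Y'$ and show the two sides agree at $(i,j)$.

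I would unfold both sides at $(i,j)$ using Definition~\ref{Matrix.support} and the definition of submatrix. The left-hand side is $\bigl(A^{\#}\bigr)(f(i), g(j))$, which equals $0$ if $A(f(i), g(j)) = 0$ and $1$ otherwise. The right-hand side is $\bigl(A(f,g)\bigr)^{\#}(i,j)$, which equals $0$ if $A(f,g)(i,j) = A(f(i),g(j)) = 0$ and $1$ otherwise. The two case splits are governed by the identical condition $A(f(i),g(j)) = 0$, so both sides coincide in every case. In Lean this should close by \texttt{rfl} or by \texttt{funext} followed by unfolding and \texttt{split}/\texttt{simp}; as in Lemma~\ref{Matrix.support_transpose}, it is essentially a definitional equality.

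\textbf{Main obstacle.} There is no real obstacle: the only subtlety is bookkeeping about how the submatrix operation and the support operation are phrased in Lean (e.g.\ whether the support is defined via an \texttt{if}-expression or via a coercion from a decidable predicate), which determines whether the proof is literally \texttt{rfl} or needs a one-line \texttt{simp}. No nontrivial property of $R$ is used beyond the presence of a distinguished zero.
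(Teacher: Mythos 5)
Your proposal is correct and matches the paper's approach, which simply notes the lemma holds by definitional equality. Your pointwise unfolding of both sides to the identical condition $A(f(i),g(j)) = 0$ is precisely why the paper can close this with \texttt{rfl}.
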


\begin{proof}
    \leanok
    Definitional equality.
\end{proof}

\begin{lemma}
    \label{Matrix.support_Z2}
    \uses{Matrix.support}
    \leanok
    If $A$ is a matrix over $\mathbb{Z}_{2}$, then $A^{\#} = A$.
\end{lemma}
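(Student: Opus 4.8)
The statement is immediate from an entrywise case analysis, invoking Definition~\ref{Matrix.support}. Fix $i \in X$ and $j \in Y$; it suffices to show $A^{\#}(i,j) = A(i,j)$, since two matrices agreeing in every entry are equal. Because $A$ is a matrix over $\mathbb{Z}_2$, the entry $A(i,j)$ is either $0$ or $1$, these being the only two elements of $\mathbb{Z}_2$, so we may split into exactly two cases.

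If $A(i,j) = 0$, then Definition~\ref{Matrix.support} gives $A^{\#}(i,j) = 0 = A(i,j)$. If instead $A(i,j) \neq 0$, then $A(i,j) = 1$ since $1$ is the unique nonzero element of $\mathbb{Z}_2$, and Definition~\ref{Matrix.support} gives $A^{\#}(i,j) = 1 = A(i,j)$. In either case the entries coincide, hence $A^{\#} = A$.

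The only point deserving a moment's attention is the identification of the value $1$ produced by the support operation --- a priori an element of the two-element set $\{0,1\}$ in which supports are valued --- with the multiplicative identity $1 \in \mathbb{Z}_2$; once the relevant coercion is unfolded this is definitional, so there is no real obstacle. In the Lean formalization this reduces to a short computation over \texttt{ZMod 2}, for instance by \texttt{decide} on its two elements or by \texttt{Fin.cases}.
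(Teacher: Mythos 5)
Your proof is correct and matches the paper's approach exactly: the paper's proof is the one-line ``Check elementwise equality,'' and you have simply spelled out the two-case entrywise check that this entails. The remark about identifying the support's codomain value $1$ with the ring element $1 \in \mathbb{Z}_2$ is a reasonable note for the formalization but does not change the argument.
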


\begin{proof}
    \leanok
    Check elementwise equality.
\end{proof}

\begin{lemma}
    \label{support_eq_support_of_same_matroid_same_X}
    \uses{Matrix.support,StandardRepr}
    \leanok
    If two standard representation matrices of the same matroid have the same base, then they have the same support.
\end{lemma}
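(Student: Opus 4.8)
\emph{Proof proposal.} The plan is to show that the support of a standard representation is completely determined by the underlying matroid together with the chosen base, via the fundamental‑circuit characterization; once this is established, two standard representations of the same matroid with the same base must obviously have the same support. So let $S \in R^{X \times Y}$ be a standard representation of $M$ (Definition~\ref{StandardRepr}), so that $M$ is the vector matroid of $A = \begin{bmatrix} 1 & S \end{bmatrix} \in R^{X \times (X \cup Y)}$, whose columns lie in $R^{X}$. The columns indexed by $X$ form the standard basis $(e_{i})_{i \in X}$ of $R^{X}$, and for each $j \in Y$ the $j$-th column of $A$ equals $\sum_{i \in X} S(i,j)\, e_{i}$. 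Note that $X$ is a base of $M$ by Lemma~\ref{StandardRepr.toMatroid_isBase_X}.

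The key claim is: for all $i \in X$ and $j \in Y$,
\[
    S(i,j) \neq 0 \iff (X \setminus \{i\}) \cup \{j\} \text{ is a base of } M .
\]
For the forward implication, if $S(i,j) \neq 0$ then $e_{i} = S(i,j)^{-1}\bigl(\mathrm{col}_{j} - \sum_{k \in X,\ k \neq i} S(k,j)\, e_{k}\bigr)$, so the $|X|$ columns indexed by $(X \setminus \{i\}) \cup \{j\}$ span every $e_{k}$, hence span all of $R^{X}$; being a spanning family of size $|X|$ in $R^{X}$, they are linearly independent, so $(X \setminus \{i\}) \cup \{j\}$ is independent in $M$, and since it has the cardinality of a base it is a base. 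Conversely, if $S(i,j) = 0$ then $\mathrm{col}_{j} = \sum_{k \neq i} S(k,j)\, e_{k}$ lies in the span of the columns indexed by $X \setminus \{i\}$, so $(X \setminus \{i\}) \cup \{j\}$ is dependent in $M$ and hence not a base.

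Given the claim, the lemma follows immediately: if $S_{1}$ and $S_{2}$ are standard representations of $M$ with the same base $X$, then for every $(i,j) \in X \times Y$ we have $S_{1}(i,j) \neq 0 \iff (X \setminus \{i\}) \cup \{j\}$ is a base of $M \iff S_{2}(i,j) \neq 0$, which by Definition~\ref{Matrix.support} is exactly $S_{1}^{\#} = S_{2}^{\#}$.

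The main obstacle is the key claim, and specifically the linear‑algebra bookkeeping behind it over a division ring: one must keep the left/right module conventions straight and invoke the fact that a spanning set of size $\dim R^{X}$ is automatically a basis (for infinite $X$ one instead argues with the finite support of $\mathrm{col}_{j}$ and reduces to a finite subset containing $i$, which does not affect the argument). Everything else — matching up the definition of the support, translating "independent of the right cardinality" into "base", and the final two-line deduction — is routine.
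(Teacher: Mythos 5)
Your argument is correct and is the standard one: the support of a standard representation with base $X$ is recovered from the matroid alone via the basis-exchange criterion $S(i,j)\neq 0 \iff (X\setminus\{i\})\cup\{j\}$ is a base of $M$, so any two standard representations over the same base must agree in support (which is exactly what makes the lemma applicable across different coefficient rings, as in the paper's use with $\mathbb{Z}_2$ versus $\mathbb{Q}$). The paper defers this proof entirely to the Lean implementation, so there is no written argument to compare against, but this fundamental-circuit characterization is surely the underlying mechanism; the one point to keep an eye on is that the ``spanning family of size $|X|$ is a basis'' step is a finite-dimensional argument, and in the infinite-$X$ setting one should first note that each column $S(\cdot,j)$ must have finite support in order for $X$ to be a base (Lemma~\ref{StandardRepr.toMatroid_isBase_X}) at all, after which the reduction to a finite subset you allude to goes through.
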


\begin{proof}
    \leanok
    \SeeLean
\end{proof}

\begin{lemma}
    \label{Matrix.isUnit_iff_isUnit_det}
    \uses{Matrix.det}
    \leanok
    A square matrix is invertible iff its determinant is invertible.
\end{lemma}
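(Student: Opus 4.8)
The plan is to prove the two implications separately, using multiplicativity of the determinant in one direction and the adjugate (Cramer) formula in the other. Throughout, "invertible" means a two-sided multiplicative inverse in the ring of square matrices $R^{X \times X}$, and "det is invertible" means $\det A$ is a unit in the commutative ring $R$.

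First I would handle the easy direction: suppose $A$ has an inverse $A^{-1}$, so that $A \cdot A^{-1} = 1$. Applying $\det$ and using the fact that $\det$ is multiplicative together with $\det 1 = 1$, we obtain $\det A \cdot \det A^{-1} = 1$, which exhibits $\det A^{-1}$ as a multiplicative inverse of $\det A$; hence $\det A$ is a unit in $R$.

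For the converse, suppose $\det A$ is a unit, with inverse $u \in R$. The adjugate satisfies $A \cdot \operatorname{adj}(A) = \operatorname{adj}(A) \cdot A = (\det A) \cdot 1$. Multiplying through by the scalar $u$, we get $A \cdot (u \cdot \operatorname{adj}(A)) = (u \cdot \operatorname{adj}(A)) \cdot A = 1$, so $u \cdot \operatorname{adj}(A)$ is a two-sided inverse of $A$, and $A$ is invertible.

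In the Lean formalization this is immediate, as it is exactly the Mathlib lemma \texttt{Matrix.isUnit\_iff\_isUnit\_det}, so the expected "main obstacle" is essentially nonexistent: one only needs to match conventions (in particular that the index type $X$ is finite, so that the determinant and adjugate are defined). The only mildly delicate point is ensuring one works with the genuine ring-theoretic notion of unit on both sides rather than, say, merely a one-sided inverse; but over a finite index type a one-sided inverse of a square matrix is automatically two-sided, so even that is routine.

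\begin{proof}
    \leanok
    \SeeLean
\end{proof}
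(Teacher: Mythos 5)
Your proof is correct and matches the paper's approach: the paper simply cites the existing Mathlib lemma \texttt{Matrix.isUnit\_iff\_isUnit\_det}, and your mathematical argument (multiplicativity of $\det$ for one direction, the adjugate formula for the other) is precisely the standard proof underlying that Mathlib result.
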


\begin{proof}
    \leanok
    This result is proved in Mathlib.
\end{proof}

\begin{lemma}
    \label{Matrix.IsTotallyUnimodular.linearIndependent_iff_support_linearIndependent_of_finite_of_finite}
    \uses{Matrix.IsTotallyUnimodular,Matrix.support}
    \leanok
    Let $A$ be a rational TU matrix with finite number of rows and finite number of columns.
    Its rows are linearly independent iff the rows of its support matrix are linearly independent.
\end{lemma}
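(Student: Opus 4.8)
The plan is to reduce linear independence of the rows of $A$ to a statement about its maximal square submatrices, and then exploit that, for a TU matrix, every square submatrix has determinant in $\{0,\pm1\}$, so that "nonsingular over $\mathbb{Q}$" coincides with "determinant $\pm1$", a property which is preserved — and reflected — by reduction modulo $2$. Throughout I read the support matrix $A^{\#}$, whose entries lie in $\{0,1\}$, over $\mathbb{Z}_2$: since $A$ is TU its entries lie in $\{0,\pm1\}$ (take $1\times1$ submatrices), and reducing $A$ modulo $2$, i.e.\ identifying $-1$ with $1$, produces exactly $A^{\#}$.

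First I would record the standard fact that, for a finite matrix $M$ over a field $F$, the rows of $M$ are linearly independent if and only if $M$ has full row rank, equivalently if and only if $M$ has a square submatrix of order $|X|$ (the number of rows) whose determinant is nonzero. In Lean this should be available from the existing \texttt{Matrix.rank} API; if no ready-made lemma matches, it is an easy consequence of the characterization of rank as the dimension of the row span together with its description via nonvanishing minors. Applying this to $M=A$ over $\mathbb{Q}$ and to $M=A^{\#}$ over $\mathbb{Z}_2$ — which share the index sets $X$ and $Y$ — reduces the lemma to the per-submatrix equivalence: for every column subset $S\subseteq Y$ with $|S|=|X|$,
\[
    \det{}_{\mathbb{Q}}\, A(X,S)\neq 0 \iff \det{}_{\mathbb{Z}_2}\, A^{\#}(X,S)\neq 0 .
\]

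To prove this, fix such an $S$. A submatrix of a TU matrix is TU, so $A(X,S)$ is TU and hence $\det_{\mathbb{Q}} A(X,S)\in\{0,\pm1\}$; moreover its entries lie in $\{0,\pm1\}\subseteq\mathbb{Z}$, so $A(X,S)=\iota\circ B$ entrywise for a unique $B\in\mathbb{Z}^{X\times S}$ with entries in $\{0,\pm1\}$, where $\iota:\mathbb{Z}\hookrightarrow\mathbb{Q}$. Compatibility of the determinant with ring homomorphisms gives $\det_{\mathbb{Q}} A(X,S)=\iota(\det_{\mathbb{Z}} B)$, so $\det_{\mathbb{Z}} B\in\{0,\pm1\}$. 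Let $\pi:\mathbb{Z}\to\mathbb{Z}_2$ be reduction modulo $2$. Entrywise $\pi\circ B=A^{\#}(X,S)$, since $\pi$ sends $0$ to $0$ and $\pm1$ to $1$, which is precisely the definition of the support; hence $\det_{\mathbb{Z}_2} A^{\#}(X,S)=\pi(\det_{\mathbb{Z}} B)$. As $\det_{\mathbb{Z}} B\in\{0,\pm1\}$, it reduces to $0$ modulo $2$ exactly when it equals $0$, i.e.\ exactly when $\det_{\mathbb{Q}} A(X,S)=0$; this is the displayed equivalence, and chaining the four equivalences above finishes the reduction.

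The main obstacle I anticipate is infrastructural rather than mathematical: pinning down the statement "rows linearly independent $\iff$ some order-$|X|$ square submatrix is nonsingular" in a form usable in Lean, and carefully managing the three rings $\mathbb{Q}$, $\mathbb{Z}$, $\mathbb{Z}_2$ together with the extraction, from total unimodularity, of the integer matrix $B$. It is worth noting that only the direction "$A$ row-independent over $\mathbb{Q}$ $\Rightarrow$ $A^{\#}$ row-independent over $\mathbb{Z}_2$" genuinely uses the strength of total unimodularity — namely that the relevant minor is $\pm1$ rather than merely a nonzero integer; the converse follows already from the entries of $A$ being integers, since an integer minor that is nonzero modulo $2$ cannot vanish.
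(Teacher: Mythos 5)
Your proposal is correct and matches what the paper's terse proof hint suggests: the cited lemmas \texttt{Matrix.support\_submatrix} and \texttt{Matrix.isUnit\_iff\_isUnit\_det} point to exactly the reduction you carry out — passing from row independence to invertibility of order-$|X|$ square submatrices, commuting the support operation past the submatrix, and then comparing determinants. Your explicit detour through $\mathbb{Z}$ is the right way to make the determinant comparison precise, since there is no ring homomorphism $\mathbb{Q}\to\mathbb{Z}_2$ and the support map on entries is not additive on all of $\mathbb{Q}$; the paper defers this bookkeeping to Lean, but the mathematical content is the same. Your closing remark that total unimodularity beyond the $1\times1$ case is used only in the forward direction (the backward direction needs only that the minors are integers) is a correct and worthwhile observation.
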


\begin{proof}
    \uses{Matrix.support_submatrix,Matrix.isUnit_iff_isUnit_det}
    \leanok
    See Lean implementation, which uses Lemmas \ref{Matrix.support_submatrix} and \ref{Matrix.isUnit_iff_isUnit_det}.
\end{proof}

\begin{lemma}
    \label{Matrix.IsTotallyUnimodular.linearIndependent_iff_support_linearIndependent_of_finite}
    \uses{Matrix.IsTotallyUnimodular,Matrix.support}
    \leanok
    Let $A$ be a rational TU matrix with finite number of rows.
    Its rows are linearly independent iff the rows of its support matrix are linearly independent.
\end{lemma}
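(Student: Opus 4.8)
The plan is to reduce the statement to the case in which the matrix also has finitely many columns, which is exactly Lemma~\ref{Matrix.IsTotallyUnimodular.linearIndependent_iff_support_linearIndependent_of_finite_of_finite}. Write $A \in \mathbb{Q}^{X \times Y}$ with $X$ finite and $Y$ arbitrary. The bridge is a \emph{finite witness} principle for linear independence: for any matrix $M \in \mathbb{Q}^{X \times Y}$ with $X$ finite, the rows of $M$ are linearly independent if and only if there is a finite subset $Y_0 \subseteq Y$ such that the rows of the submatrix $M(X, Y_0)$ are linearly independent. One direction is immediate, since any linear dependence among the rows of $M$ restricts to a linear dependence among the rows of $M(X, Y_0)$. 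For the other direction, the row space $V \subseteq \mathbb{Q}^Y$ is finite-dimensional (of dimension at most $|X|$); the coordinate functionals $\pi_y$ for $y \in Y$ separate the points of $V$, so their restrictions to $V$ span the finite-dimensional dual $V^{\ast}$; picking finitely many indices $y_1, \dots, y_m$ whose functionals span $V^{\ast}$ makes the evaluation map $v \mapsto (\pi_{y_1}(v), \dots, \pi_{y_m}(v))$ injective on $V$, and hence the restrictions of the (independent) rows of $M$ to $Y_0 = \{y_1, \dots, y_m\}$ remain independent.

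Granting this principle, I would prove both implications in parallel. If the rows of $A$ are linearly independent, choose a finite $Y_0 \subseteq Y$ witnessing this. The submatrix $A(X, Y_0)$ is again TU and now has finitely many rows and finitely many columns, so Lemma~\ref{Matrix.IsTotallyUnimodular.linearIndependent_iff_support_linearIndependent_of_finite_of_finite} applies: the rows of $\bigl(A(X,Y_0)\bigr)^{\#}$ are linearly independent. By Lemma~\ref{Matrix.support_submatrix}, $\bigl(A(X,Y_0)\bigr)^{\#} = A^{\#}(X, Y_0)$ is a submatrix of $A^{\#}$, so the rows of $A^{\#}$ are linearly independent. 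The converse is symmetric: if the rows of $A^{\#}$ are linearly independent, apply the finite witness principle to $A^{\#}$ to obtain a finite $Y_0$ with the rows of $A^{\#}(X, Y_0) = \bigl(A(X, Y_0)\bigr)^{\#}$ independent; since $A(X, Y_0)$ is TU and finite in both dimensions, Lemma~\ref{Matrix.IsTotallyUnimodular.linearIndependent_iff_support_linearIndependent_of_finite_of_finite} yields that the rows of $A(X, Y_0)$ are independent, whence so are the rows of $A$.

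The routine ingredients — that a submatrix of a TU matrix is TU, and that taking the support commutes with taking submatrices (Lemma~\ref{Matrix.support_submatrix}) — are straightforward. I expect the main obstacle to be the formalization of the finite witness principle and the attendant bookkeeping: transporting a linear independence statement along the reindexing between $A$ and its submatrix $A(X, Y_0)$, and extracting the finite set $Y_0$ from the finite-dimensionality of the row space in a form that is convenient in Lean (for instance via the rank of $A$, or via a finite spanning set for the dual of the row space).
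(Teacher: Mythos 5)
Your proposal is correct and takes essentially the same route as the paper: the paper's (Lean-deferred) proof also reduces to the finite-by-finite Lemma~\ref{Matrix.IsTotallyUnimodular.linearIndependent_iff_support_linearIndependent_of_finite_of_finite}, and your finite-witness argument for selecting a finite column set $Y_0$ is the natural way to carry out that reduction.
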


\begin{proof}
    \uses{Matrix.IsTotallyUnimodular.linearIndependent_iff_support_linearIndependent_of_finite_of_finite}
    \leanok
    See Lean implementation, which uses Lemma \ref{Matrix.IsTotallyUnimodular.linearIndependent_iff_support_linearIndependent_of_finite_of_finite}.
\end{proof}

\begin{lemma}
    \label{Matrix.IsTotallyUnimodular.linearIndependent_iff_support_linearIndependent}
    \uses{Matrix.IsTotallyUnimodular,Matrix.support}
    \leanok
    Let $A$ be a rational TU matrix.
    Its rows are linearly independent iff the rows of its support matrix are linearly independent.
\end{lemma}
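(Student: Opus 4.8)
The plan is to bootstrap from the finite-rows version, Lemma~\ref{Matrix.IsTotallyUnimodular.linearIndependent_iff_support_linearIndependent_of_finite}, using the standard fact that a family of vectors in a module over a field is linearly independent if and only if every finite subfamily is linearly independent. Write $A \in \mathbb{Q}^{X \times Y}$, with rows indexed by the (possibly infinite) set $X$, and for a finite subset $X_{0} \subseteq X$ let $A_{X_{0}}$ denote the submatrix of $A$ that keeps exactly the rows in $X_{0}$ and all of the columns $Y$.

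I would first record three elementary facts about $A_{X_{0}}$. It is totally unimodular, being a submatrix of the TU matrix $A$ (immediate from Definition~\ref{Matrix.IsTotallyUnimodular}); it has finitely many rows by construction; and its support is $(A^{\#})_{X_{0}}$, the corresponding submatrix of the support matrix of $A$, which is precisely Lemma~\ref{Matrix.support_submatrix}. Consequently Lemma~\ref{Matrix.IsTotallyUnimodular.linearIndependent_iff_support_linearIndependent_of_finite} applies to $A_{X_{0}}$ and yields, for each finite $X_{0} \subseteq X$, that the rows of $A_{X_{0}}$ are linearly independent if and only if the rows of $(A^{\#})_{X_{0}}$ are linearly independent.

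It then remains to chain the equivalences: the rows of $A$ are linearly independent $\iff$ for every finite $X_{0} \subseteq X$ the rows of $A_{X_{0}}$ are linearly independent $\iff$ for every finite $X_{0} \subseteq X$ the rows of $(A^{\#})_{X_{0}}$ are linearly independent $\iff$ the rows of $A^{\#}$ are linearly independent. The two outer equivalences are the finite-subfamily characterization of linear independence, applied once to the family of rows of $A$ inside $\mathbb{Q}^{Y}$ and once to the family of rows of $A^{\#}$ inside $\{0,1\}^{Y}$ viewed over $\mathbb{Q}$; the middle equivalence is the per-$X_{0}$ statement established in the previous step.

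The main obstacle is not mathematical but organizational: locating the right Mathlib statement of ``linearly independent if and only if every finite restriction is linearly independent'' (phrased, e.g., via finitely supported coefficient functions, or as a directed-union argument over the finite subsets of $X$) in a form whose ``restriction of the row family to $X_{0}$'' is definitionally the row family of the submatrix $A_{X_{0}}$, and likewise arranging that ``support of the restricted matrix'' coincides on the nose with ``restriction of the support matrix'' so that Lemma~\ref{Matrix.support_submatrix} can be fed in directly. Once this plumbing is in place, the argument is a short formal deduction from Lemmas~\ref{Matrix.support_submatrix} and~\ref{Matrix.IsTotallyUnimodular.linearIndependent_iff_support_linearIndependent_of_finite}.
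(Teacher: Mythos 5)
Your proposal is correct and follows the same route as the paper: reduce to the already-established finite-rows case (Lemma~\ref{Matrix.IsTotallyUnimodular.linearIndependent_iff_support_linearIndependent_of_finite}) by invoking the standard fact that linear independence of a family can be checked on finite subfamilies, using Lemma~\ref{Matrix.support_submatrix} to commute taking supports with taking row submatrices. The paper defers the details to Lean but explicitly cites the finite-rows lemma as the key ingredient, which is exactly your strategy.
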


\begin{proof}
    \uses{Matrix.IsTotallyUnimodular.linearIndependent_iff_support_linearIndependent_of_finite}
    \leanok
    See Lean implementation, which uses Lemma \ref{Matrix.IsTotallyUnimodular.linearIndependent_iff_support_linearIndependent_of_finite}.
\end{proof}

\begin{lemma}
    \label{Matrix.IsTotallyUnimodular.toMatroid_eq_support_toMatroid}
    \uses{Matrix.IsTotallyUnimodular,Matrix.support,StandardRepr}
    \leanok
    Let $A$ be a TU matrix.
    \begin{enumerate}
        \item If a matroid is represented by $A$, then it is also represented by $A^{\#}$.
        \item If a matroid is represented by $A^{\#}$, then it is also represented by $A$.
    \end{enumerate}
\end{lemma}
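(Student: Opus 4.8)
The plan is to prove that the vector matroid of $A$ and the vector matroid of $A^{\#}$ are literally the same matroid; since both have ground set $Y$, it suffices to show that a set $I \subseteq Y$ is independent in the vector matroid of $A$ exactly when it is independent in the vector matroid of $A^{\#}$, after which items~(1) and~(2) are both immediate. By Definition~\ref{VectorMatroid}, $I$ is independent in the vector matroid of $A$ iff the columns of $A$ indexed by $I$ are linearly independent, i.e.\ iff the rows of the submatrix $B := (A^{\top})(I, X)$ of $A^{\top}$ (keeping all columns, and only the rows indexed by $I$) are linearly independent; likewise $I$ is independent in the vector matroid of $A^{\#}$ iff the rows of $\bigl((A^{\#})^{\top}\bigr)(I, X)$ are linearly independent. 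We take $A$ to be rational, as is needed to invoke the support lemmas below.

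First I would record that $B$ is TU: $A^{\top}$ is TU because transposition merely transposes each square submatrix and so preserves the determinant condition of Definition~\ref{Matrix.IsTotallyUnimodular}, and a row-submatrix of a TU matrix is TU because each of its square submatrices is again a square submatrix of the ambient matrix. Then I would apply Lemma~\ref{Matrix.IsTotallyUnimodular.linearIndependent_iff_support_linearIndependent} to $B$, obtaining that the rows of $B$ are linearly independent iff the rows of $B^{\#}$ are linearly independent. It remains to identify $B^{\#}$ with $\bigl((A^{\#})^{\top}\bigr)(I, X)$: by Lemma~\ref{Matrix.support_submatrix} the support of a submatrix is the submatrix of the support, so $B^{\#} = \bigl((A^{\top})(I,X)\bigr)^{\#} = (A^{\top})^{\#}(I,X)$, and by Lemma~\ref{Matrix.support_transpose} the support of a transpose is the transpose of the support, so $(A^{\top})^{\#} = (A^{\#})^{\top}$. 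Chaining the equivalences — $I$ independent in the vector matroid of $A$ $\iff$ rows of $B$ linearly independent $\iff$ rows of $B^{\#}$ linearly independent $\iff$ $I$ independent in the vector matroid of $A^{\#}$ — completes the argument, and both~(1) and~(2) follow since the resulting relation is symmetric.

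Essentially all of the mathematical substance is already contained in Lemma~\ref{Matrix.IsTotallyUnimodular.linearIndependent_iff_support_linearIndependent}; the present lemma is a thin packaging of it. The only genuine work is bookkeeping: lining up the column-family/row-of-transpose correspondence used by Definition~\ref{VectorMatroid}, the routine inheritance of total unimodularity by transposes and row-submatrices, and the commutation of the support operation with transpose and submatrix (Lemmas~\ref{Matrix.support_transpose} and~\ref{Matrix.support_submatrix}). I expect the main obstacle to be a purely formal one on the Lean side, namely unfolding ``a matroid is represented by $A$'' (Definition~\ref{VectorMatroid}) into precisely the linear-independence statement above, so that the symmetric equivalence really does discharge both implications rather than only one.
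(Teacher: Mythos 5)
Your proposal is correct and takes essentially the same route as the paper: the paper defers the proof to Lean but cites precisely the three lemmas you invoke (support commutes with transpose, support commutes with submatrix, and the TU linear-independence/support equivalence), and your chain of equivalences — passing from column-independence in $A$ to row-independence of the corresponding row-submatrix of $A^{\top}$, applying Lemma~\ref{Matrix.IsTotallyUnimodular.linearIndependent_iff_support_linearIndependent}, and transporting back along the support/transpose/submatrix identities — is exactly the intended argument. Establishing the matroid equality directly, so that both enumerated implications fall out by symmetry, is the natural packaging and matches the Lean structure.
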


\begin{proof}
    \uses{Matrix.support_transpose,Matrix.support_submatrix,Matrix.IsTotallyUnimodular.linearIndependent_iff_support_linearIndependent}
    \leanok
    See Lean implementation, which uses Lemmas \ref{Matrix.support_transpose}, \ref{Matrix.support_submatrix}, and \ref{Matrix.IsTotallyUnimodular.linearIndependent_iff_support_linearIndependent}.
\end{proof}

\subsection{Regular Matroids}

\begin{definition}
    \label{Matroid.IsRegular}
    \uses{Matroid,VectorMatroid,Matrix.IsTotallyUnimodular}
    \leanok
    A matroid $M$ is regular if there exists a TU matrix $A \in \mathbb{Q}^{X \times Y}$ such that $M$ is a vector matroid of $A$.
\end{definition}

\begin{definition}
    \label{Matrix.IsTuSigningOf}
    \uses{Matrix.IsTotallyUnimodular}
    \leanok
    We say that $A' \in \mathbb{Q}^{X \times Y}$ is a TU signing of $A \in \mathbb{Z}_{2}^{X \times Y}$ if $A'$ is TU and
    \[
        \forall i \in X, \ \forall j \in Y, \ |A' (i, j)| = A (i, j).
    \]
\end{definition}


\begin{lemma}
    \label{StandardRepr.toMatroid_isRegular_iff_hasTuSigning}
    \uses{StandardRepr,Matroid.IsRegular,Matrix.IsTuSigningOf}
    \leanok
    Let $B \in \mathbb{Z}_{2}^{X \times Y}$ be a standard representation matrix of a matroid $M$. Then $M$ is regular if and only if $B$ has a TU signing.
\end{lemma}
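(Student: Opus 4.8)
The plan is to prove the two implications separately, in each case reducing to the ``support bridge'' of Lemma~\ref{Matrix.IsTotallyUnimodular.toMatroid_eq_support_toMatroid} together with the uniqueness of the standard representation over $\mathbb{Z}_2$.

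For the direction ``$B$ has a TU signing $\Rightarrow M$ is regular'', let $B'$ be a TU signing of $B$. First I would prepend the identity block to obtain $A' = \begin{bmatrix} 1 & B' \end{bmatrix} \in \mathbb{Q}^{X \times (X \cup Y)}$ and note that $A'$ is again TU, since prepending an identity matrix preserves total unimodularity (the same fact that makes Definition~\ref{Matrix.shortTableauPivot} legitimate, and which is available in the development). Because $B'$ is a signing of $B$, we have $B'(i,j) = 0 \iff B(i,j) = 0$, so $(B')^{\#} = B$ and hence $(A')^{\#} = \begin{bmatrix} 1 & B \end{bmatrix}$, which is precisely the matrix whose $\mathbb{Z}_2$-vector matroid is $M$ by Definition~\ref{StandardRepr}. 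Applying Lemma~\ref{Matrix.IsTotallyUnimodular.toMatroid_eq_support_toMatroid}(2) to the TU matrix $A'$: since $M$ is represented by $(A')^{\#}$, it is also represented by $A'$; thus $M$ is the vector matroid of a rational TU matrix and is therefore regular by Definition~\ref{Matroid.IsRegular}.

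For the converse, suppose $M$ is regular, so $M$ is the vector matroid of some TU matrix $A_0$ over $\mathbb{Q}$ (whose columns are therefore indexed by $X \cup Y$). By Lemma~\ref{StandardRepr.toMatroid_isBase_X}, $X$ is a base of $M$, so Lemma~\ref{VectorMatroid.exists_standardRepr_isBase_isTotallyUnimodular} yields a matrix $S \in \mathbb{Q}^{X \times Y}$ that is TU and is a standard representation of $M$; by Lemma~\ref{StandardRepr.toMatroid_isBase_X} again, $S$ has base $X$ as well. Now $S$ and $B$ are two standard representations of the same matroid $M$ with the same base $X$, so Lemma~\ref{support_eq_support_of_same_matroid_same_X} gives $S^{\#} = B^{\#}$, and Lemma~\ref{Matrix.support_Z2} gives $B^{\#} = B$; hence $S^{\#} = B$. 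Finally, total unimodularity of $S$ forces every entry of $S$ into $\{0, \pm 1\}$, so $|S(i,j)| = S^{\#}(i,j) = B(i,j)$ for all $i,j$; together with the fact that $S$ is TU, this says exactly that $S$ is a TU signing of $B$ in the sense of Definition~\ref{Matrix.IsTuSigningOf}.

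The conceptual content is entirely carried by Lemma~\ref{Matrix.IsTotallyUnimodular.toMatroid_eq_support_toMatroid} (a rational TU matrix represents the same matroid as its $\mathbb{Z}_2$-support) and Lemma~\ref{support_eq_support_of_same_matroid_same_X} (the support of a standard representation is determined by the base), so I expect the main obstacle to be bookkeeping rather than mathematics: tracking which matrices live over $\mathbb{Q}$ and which over $\mathbb{Z}_2$, handling the coercion $\mathbb{Z}_2 \to \mathbb{Q}$ implicit in Definition~\ref{Matrix.IsTuSigningOf}, identifying $(A')^{\#}$ on the nose with the matrix defining $M$, and invoking the ``prepending an identity preserves TU'' fact in the first direction. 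Getting the index sets and coercions to line up is where the Lean proof will mostly live.
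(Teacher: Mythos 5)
Your proposal is correct and takes essentially the same approach as the paper's proof: the forward direction uses Lemma~\ref{StandardRepr.toMatroid_isBase_X}, Lemma~\ref{VectorMatroid.exists_standardRepr_isBase_isTotallyUnimodular}, Lemma~\ref{support_eq_support_of_same_matroid_same_X}, and Lemma~\ref{Matrix.support_Z2} in the same way, and the reverse direction prepends the identity and invokes Lemma~\ref{Matrix.IsTotallyUnimodular.toMatroid_eq_support_toMatroid} exactly as the paper does. The only cosmetic difference is that you spell out explicitly why $S^{\#} = B$ and TU-ness of $S$ together yield $|S(i,j)| = B(i,j)$, a step the paper leaves implicit.
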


\begin{proof}
    \uses{Matroid.IsRegular,Matrix.IsTuSigningOf,StandardRepr.toMatroid_isBase_X,VectorMatroid.exists_standardRepr_isBase_isTotallyUnimodular,support_eq_support_of_same_matroid_same_X,Matrix.IsTotallyUnimodular.toMatroid_eq_support_toMatroid, Matrix.support_Z2}
    \leanok
    Suppose that $M$ is regular. By Definition~\ref{Matroid.IsRegular}, there exists a TU matrix $A \in \mathbb{Q}^{X \times Y}$ such that $M$ is a vector matroid of $A$. By Lemma~\ref{StandardRepr.toMatroid_isBase_X}, $X$ (the row set of $B$) is a base of $M$. By Lemma~\ref{VectorMatroid.exists_standardRepr_isBase_isTotallyUnimodular}, $A$ can be converted into a standard representation matrix $B' \in \mathbb{Q}^{X \times Y}$ of $M$ such that $B'$ is also TU. Since $B'$ and $B$ are both standard representations of $M$, by Lemma~\ref{support_eq_support_of_same_matroid_same_X} the support matrices $(B')^{\#}$ and $B^{\#}$ are the same. Lemma \ref{Matrix.support_Z2} gives $B^{\#} = B$. Thus, $B'$ is TU and $(B')^{\#} = B$, so $B'$ is a TU signing of $B$.

    Suppose that $B$ has a TU signing $B' \in \mathbb{Q}^{X \times Y}$. Then $A = [1 \mid B']$ is TU, as it is obtained from $B'$ by adjoining the identity matrix. Moreover, by Lemma~\ref{Matrix.IsTotallyUnimodular.toMatroid_eq_support_toMatroid}, $A$ represents the same matroid as $A^{\#} = [1 \mid B]$, which is $M$. Thus, $A$ is a TU matrix representing $M$, so $M$ is regular.
\end{proof}

\section{Regularity of 1-Sum}

\begin{definition}
    \label{standardReprSum1}
    \uses{StandardRepr}
    \leanok
    Let $R$ be a magma containing zero (we will use $R = \mathbb{Z}_{2}$ and $R = \mathbb{Q}$). Let $B_{\ell} \in R^{X_{\ell} \times Y_{\ell}}$ and $B_{r} \in R^{X_{r} \times Y_{r}}$ be matrices where $X_{\ell}, Y_{\ell}, X_{r}, Y_{r}$ are pairwise disjoint sets. The $1$-sum $B = B_{\ell} \oplus_{1} B_{r}$ of $B_{\ell}$ and $B_{r}$ is
    \[
        B = \begin{bmatrix} B_{\ell} & 0 \\ 0 & B_{r} \end{bmatrix} \in R^{(X_{\ell} \cup X_{r}) \times (Y_{\ell} \cup Y_{r})}.
    \]
\end{definition}

\begin{definition}
    \label{Matroid.IsSum1of}
    \uses{Matroid,StandardRepr,standardReprSum1}
    \leanok
    A matroid $M$ is a $1$-sum of matroids $M_{\ell}$ and $M_{r}$ if there exist standard $\mathbb{Z}_{2}$ representation matrices $B_{\ell}$, $B_{r}$, and $B$ (for $M_{\ell}$, $M_{r}$, and $M$, respectively) of the form given in Definition~\ref{standardReprSum1}.
\end{definition}

\begin{lemma}
    \label{Matrix.det_fromBlocks_zero}
    \uses{Matrix.det}
    \leanok
    Let $A$ be a square matrix of the form $A = \begin{bmatrix} A_{11} & A_{12} \\ 0 & A_{22} \end{bmatrix}$. Then $\det A = \det A_{11} \cdot \det A_{22}$.
\end{lemma}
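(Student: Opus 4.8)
The plan is to prove this directly from the Leibniz (permutation-sum) definition of the determinant, Definition~\ref{Matrix.det}. Write the common index set of rows and columns as a disjoint union $X_{1} \cup X_{2}$ with $A_{11} \in R^{X_{1} \times X_{1}}$, $A_{22} \in R^{X_{2} \times X_{2}}$, $A_{12} \in R^{X_{1} \times X_{2}}$, and the bottom-left block equal to $0 \in R^{X_{2} \times X_{1}}$, so that $\det A = \sum_{\sigma} \operatorname{sgn}(\sigma) \prod_{i} a_{i, \sigma(i)}$ with $\sigma$ ranging over permutations of $X_{1} \cup X_{2}$.

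The first key step is to note that any permutation $\sigma$ contributing a nonzero summand must satisfy $\sigma(X_{2}) \subseteq X_{2}$: if some $i \in X_{2}$ had $\sigma(i) \in X_{1}$, then $a_{i,\sigma(i)}$ lies in the zero block and the whole product vanishes. Since $\sigma$ is a bijection and $X_{2}$ is finite, $\sigma(X_{2}) \subseteq X_{2}$ forces $\sigma|_{X_{2}}$ to be a permutation of $X_{2}$, hence $\sigma|_{X_{1}}$ to be a permutation of $X_{1}$. Thus the nonzero terms are indexed by pairs $(\sigma_{1}, \sigma_{2}) \in S_{X_{1}} \times S_{X_{2}}$ assembled into a "block-diagonal" permutation, with $\operatorname{sgn}(\sigma) = \operatorname{sgn}(\sigma_{1}) \operatorname{sgn}(\sigma_{2})$ since $\sigma$ is a product of two permutations with disjoint supports.

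The second step is to split the product over $i \in X_{1} \cup X_{2}$ into the product over $i \in X_{1}$, where $a_{i,\sigma(i)} = (A_{11})(i, \sigma_{1}(i))$, times the product over $i \in X_{2}$, where $a_{i,\sigma(i)} = (A_{22})(i, \sigma_{2}(i))$ (here the $A_{12}$ block never appears because for $i \in X_{1}$ we have $\sigma(i) = \sigma_{1}(i) \in X_{1}$). The double sum over $(\sigma_{1}, \sigma_{2})$ then factors as a product of two independent sums, which are exactly $\det A_{11}$ and $\det A_{22}$ by Definition~\ref{Matrix.det}, giving the claim.

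The main obstacle is not conceptual but bookkeeping: making the decomposition $\sigma \leftrightarrow (\sigma_{1}, \sigma_{2})$ into an explicit bijection (a \texttt{sumCongr}-style equivalence) and checking the sign multiplicativity through it. An alternative, possibly cleaner formal route is induction on $|X_{2}|$ via Laplace expansion along a row in the $X_{2}$-block: that row is zero on all $X_{1}$-columns, each minor is again block-triangular with a smaller bottom-right block, and the induction hypothesis lets one factor out $\det A_{11}$, leaving precisely the cofactor expansion of $\det A_{22}$. In any case, this identity is already available in Mathlib (as \texttt{Matrix.det\_fromBlocks\_zero} up to reindexing the block decomposition), so in the formalization it suffices to invoke that result.
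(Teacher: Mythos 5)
Your proposal is correct, and it lands in the same place as the paper: the blueprint proof simply cites \texttt{Matrix.det\_fromBlocks\_zero} in Mathlib, which you also note at the end is the appropriate move for the formalization. The Leibniz-formula argument you sketch (nonzero terms force $\sigma$ to stabilize $X_2$, hence $X_1$, so the sum factors over $S_{X_1} \times S_{X_2}$ with multiplicative signs) is a correct and standard proof of the identity, but it is supplementary to the paper's one-line appeal to the library.
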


\begin{proof}
    \uses{Matrix.det}
    \leanok
    This result is proved in Mathlib.
\end{proof}

\begin{lemma}
    \label{Matrix.fromBlocks_isTotallyUnimodular}
    \uses{standardReprSum1,Matrix.IsTotallyUnimodular}
    \leanok
    Let $B_{\ell}$ and $B_{r}$ from Definition~\ref{standardReprSum1} be TU matrices (over $\mathbb{Q}$). Then $B = B_{\ell} \oplus_{1} B_{r}$ is TU.
\end{lemma}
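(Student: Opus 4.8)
The plan is to verify the defining property of TU matrices directly, possibly after passing to the equivalent notion of $k$-partial unimodularity via Lemma~\ref{Matrix.isTotallyUnimodular_iff_forall_isPartiallyUnimodular}. Fix $k \in \mathbb{N}$ and an arbitrary $k \times k$ submatrix $T$ of $B$; the goal is to show $\det T \in \{0, \pm 1\}$. Such a $T$ is specified by a $k$-tuple of rows, each drawn from $X_{\ell} \cup X_{r}$, and a $k$-tuple of columns, each drawn from $Y_{\ell} \cup Y_{r}$. First I would partition the row indices of $T$ according to whether the corresponding row of $B$ lies in $X_{\ell}$ or in $X_{r}$, and likewise partition the column indices according to membership in $Y_{\ell}$ or $Y_{r}$. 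Since the off-diagonal blocks of $B$ vanish, every entry of $T$ sitting in a row coming from $X_{\ell}$ and a column coming from $Y_{r}$ is $0$, and symmetrically for a row from $X_{r}$ and a column from $Y_{\ell}$.

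Next I would reorder the rows of $T$ so that those coming from $X_{\ell}$ precede those coming from $X_{r}$, and reorder the columns analogously; each reordering is a permutation, so this only multiplies $\det T$ by $\pm 1$. It therefore suffices to control the determinant of the reordered matrix $\widetilde T = \begin{bmatrix} T_{\ell\ell} & 0 \\ 0 & T_{rr} \end{bmatrix}$, where $T_{\ell\ell}$ is a submatrix of $B_{\ell}$ and $T_{rr}$ is a submatrix of $B_{r}$. Let $p$ be the number of rows of $T$ taken from $X_{\ell}$ and $q$ the number of columns taken from $Y_{\ell}$. If $p = q$, then $T_{\ell\ell}$ and $T_{rr}$ are both square, so Lemma~\ref{Matrix.det_fromBlocks_zero} gives $\det \widetilde T = \det T_{\ell\ell} \cdot \det T_{rr}$; each factor lies in $\{0, \pm 1\}$ because $T_{\ell\ell}$ and $T_{rr}$ are square submatrices of the TU matrices $B_{\ell}$ and $B_{r}$, hence $\det T = \pm \det \widetilde T \in \{0, \pm 1\}$.

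It remains to treat the unbalanced case $p \neq q$; say $p < q$, the case $p > q$ being symmetric. Here I would argue directly on $T$ (no reordering needed): the $q$ columns of $T$ coming from $Y_{\ell}$ have all their nonzero entries confined to the $p$ rows coming from $X_{\ell}$, so they lie in a coordinate subspace of dimension $p < q$ and are therefore linearly dependent; consequently $\det T = 0$. This covers every case and completes the verification.

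I expect the main obstacle to be organizational rather than mathematical: cleanly setting up the partition of the row and column index tuples together with the reindexing permutation that exhibits the block-diagonal form, so that Lemma~\ref{Matrix.det_fromBlocks_zero} (which requires square diagonal blocks) can be invoked in the balanced case while the unbalanced case is routed through the linear-dependence argument. In the Lean formalization this bookkeeping — tracking the equivalences between the index types and checking that the two off-diagonal blocks are genuinely zero — tends to be more delicate than the underlying reasoning, and phrasing the final step through $k$-partial unimodularity (so the tuples need not be injective) is likely to keep the argument uniform.
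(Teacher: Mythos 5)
Your proof is correct and follows essentially the same strategy as the paper's: split rows and columns of the $k \times k$ submatrix $T$ according to which block of $B$ they come from, use the vanishing off-diagonal blocks, handle the balanced case with Lemma~\ref{Matrix.det_fromBlocks_zero} applied to the square diagonal blocks, and show $\det T = 0$ in the unbalanced case. The one place you diverge is the unbalanced case: the paper pads the short diagonal block with zero rows borrowed from the other side to produce an auxiliary block-upper-triangular decomposition with a zero row in the top-left square block, then invokes Lemma~\ref{Matrix.det_fromBlocks_zero} a second time. You instead argue directly on $T$ that the $q$ columns from $Y_\ell$ are supported on only $p < q$ rows and are therefore linearly dependent. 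Both are valid; your linear-dependence argument is slightly more direct and avoids constructing the auxiliary decomposition, while the paper's version keeps everything routed through a single determinant lemma, which may be more convenient when formalizing. Your opening remark about possibly passing through $k$-partial unimodularity is harmless but not actually needed: the direct argument never uses the non-injective submatrix notion, so you could drop it.
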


\begin{proof}
    \uses{standardReprSum1,Matrix.IsTotallyUnimodular,Matrix.det_fromBlocks_zero}
    \leanok
    We prove that $B$ is TU by Definition~\ref{Matrix.IsTotallyUnimodular}. To this end, let $T$ be a square submatrix of $B$. Our goal is to show that $\det T \in \{0, \pm 1\}$.

    Let $T_{\ell}$ and $T_{r}$ denote the submatrices in the intersection of $T$ with $B_{\ell}$ and $B_{r}$, respectively. Then $T$ has the form
    \[
        T = \begin{bmatrix} T_{\ell} & 0 \\ 0 & T_{r} \end{bmatrix}.
    \]

    First, suppose that $T_{\ell}$ and $T_{r}$ are square. Then $\det T = \det T_{\ell} \cdot \det T_{r}$ by Lemma~\ref{Matrix.det_fromBlocks_zero}. Moreover, $\det T_{\ell}, \det T_{r} \in \{0, \pm 1\}$, since $T_{\ell}$ and $T_{r}$ are square submatrices of TU matrices $B_{\ell}$ and $B_{r}$, respectively. Thus, $\det T \in \{0, \pm 1\}$, as desired.

    Without loss of generality we may assume that $T_{\ell}$ has fewer rows than columns. Otherwise we can transpose all matrices and use the same proof, since TUness and determinants are preserved under transposition. Thus, $T$ can be represented in the form
    \[
        T = \begin{bmatrix} T_{11} & T_{12} \\ 0 & T_{22} \end{bmatrix},
    \]
    where $T_{11}$ contains $T_{\ell}$ and some zero rows, $T_{22}$ is a submatrix of $T_{r}$, and $T_{12}$ contains the rest of the rows of $T_{r}$ (not contained in $T_{22}$) and some zero rows. By Lemma~\ref{Matrix.det_fromBlocks_zero}, we have $\det T = \det T_{11} \cdot \det T_{22}$. Since $T_{11}$ contains at least one zero row, $\det T_{11} = 0$. Thus, $\det T = 0 \in \{0, \pm 1\}$, as desired.
\end{proof}

\begin{theorem}
    \label{Matroid.IsSum1of.isRegular}
    \uses{Matroid.IsSum1of,Matroid.IsRegular}
    \leanok
    Let $M$ be a $1$-sum of regular matroids $M_{\ell}$ and $M_{r}$. Then $M$ is also regular.
\end{theorem}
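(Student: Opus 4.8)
The plan is to reduce the statement to the two facts already established: the characterization of regularity by TU signings (Lemma~\ref{StandardRepr.toMatroid_isRegular_iff_hasTuSigning}) and the fact that a $1$-sum of TU matrices is TU (Lemma~\ref{Matrix.fromBlocks_isTotallyUnimodular}). First I would unpack the hypothesis: since $M$ is a $1$-sum of $M_\ell$ and $M_r$, Definition~\ref{Matroid.IsSum1of} provides standard $\mathbb{Z}_2$ representation matrices $B_\ell \in \mathbb{Z}_2^{X_\ell \times Y_\ell}$ of $M_\ell$, $B_r \in \mathbb{Z}_2^{X_r \times Y_r}$ of $M_r$, and $B \in \mathbb{Z}_2^{(X_\ell \cup X_r) \times (Y_\ell \cup Y_r)}$ of $M$, with $X_\ell, Y_\ell, X_r, Y_r$ pairwise disjoint and $B = B_\ell \oplus_1 B_r$ as in Definition~\ref{standardReprSum1}.

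Next I would produce a rational TU signing of $B$. Applying Lemma~\ref{StandardRepr.toMatroid_isRegular_iff_hasTuSigning} to $M_\ell$ (using that $M_\ell$ is regular) yields a TU signing $B_\ell' \in \mathbb{Q}^{X_\ell \times Y_\ell}$ of $B_\ell$, and similarly a TU signing $B_r' \in \mathbb{Q}^{X_r \times Y_r}$ of $B_r$. Set $B' = B_\ell' \oplus_1 B_r' \in \mathbb{Q}^{(X_\ell \cup X_r) \times (Y_\ell \cup Y_r)}$. By Lemma~\ref{Matrix.fromBlocks_isTotallyUnimodular}, $B'$ is TU. It then remains to verify that $B'$ is a TU signing of $B$ in the sense of Definition~\ref{Matrix.IsTuSigningOf}, i.e.\ that $|B'(i,j)| = B(i,j)$ for every $(i,j)$; this is a four-way case split on which block $(i,j)$ lies in. On the $X_\ell \times Y_\ell$ block the identity reads $|B_\ell'(i,j)| = B_\ell(i,j)$, which holds because $B_\ell'$ signs $B_\ell$; symmetrically on the $X_r \times Y_r$ block; and on each of the two off-diagonal blocks both $B'$ and $B$ are identically $0$, so $|0| = 0$.

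Finally, since $B$ is a standard $\mathbb{Z}_2$ representation of $M$ and $B$ admits the TU signing $B'$, the converse direction of Lemma~\ref{StandardRepr.toMatroid_isRegular_iff_hasTuSigning} gives that $M$ is regular. I do not expect a genuine obstacle here: all the substantive content is already packaged in Lemmas~\ref{StandardRepr.toMatroid_isRegular_iff_hasTuSigning} and~\ref{Matrix.fromBlocks_isTotallyUnimodular}, so the only real work in this proof is the bookkeeping of the four disjoint index sets and confirming that $B_\ell' \oplus_1 B_r'$ has exactly the same zero/nonzero pattern, block by block, as $B_\ell \oplus_1 B_r$ — the kind of step that is conceptually trivial but is the main thing to get right in the Lean formalization.
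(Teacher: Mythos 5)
Your proof is correct and takes essentially the same route as the paper: obtain TU signings $B_\ell'$, $B_r'$ via Lemma~\ref{StandardRepr.toMatroid_isRegular_iff_hasTuSigning}, form $B' = B_\ell' \oplus_1 B_r'$, invoke Lemma~\ref{Matrix.fromBlocks_isTotallyUnimodular} for TUness, check the signing property blockwise, and apply Lemma~\ref{StandardRepr.toMatroid_isRegular_iff_hasTuSigning} once more. The only difference is that you spell out the four-block verification that the paper compresses into ``a direct calculation.''
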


\begin{proof}
    \uses{StandardRepr,Matroid.IsSum1of,Matroid.IsRegular,StandardRepr.toMatroid_isRegular_iff_hasTuSigning,Matrix.fromBlocks_isTotallyUnimodular,Matrix.IsTuSigningOf}
    \leanok
    Let $B_{\ell}$, $B_{r}$, and $B$ be standard $\mathbb{Z}_{2}$ representation matrices from Definition~\ref{Matroid.IsSum1of}. Since $M_{\ell}$ and $M_{r}$ are regular, by Lemma~\ref{StandardRepr.toMatroid_isRegular_iff_hasTuSigning}, $B_{\ell}$ and $B_{r}$ have TU signings $B_{\ell}'$ and $B_{r}'$, respectively. Then $B' = B_{\ell}' \oplus_{1} B_{r}'$ is a TU signing of $B$. Indeed, $B'$ is TU by Lemma~\ref{Matrix.fromBlocks_isTotallyUnimodular}, and a direct calculation shows that $B'$ is a signing of $B$. Thus, $M$ is regular by Lemma~\ref{StandardRepr.toMatroid_isRegular_iff_hasTuSigning}.
\end{proof}

\section{Regularity of 2-Sum}

\begin{definition}
    \label{standardReprSum2}
    \uses{StandardRepr}
    \leanok
    Let $R$ be a semiring (we will use $R = \mathbb{Z}_{2}$ and $R = \mathbb{Q}$). Let $B_{\ell} \in R^{X_{\ell} \times Y_{\ell}}$ and $B_{r} \in R^{X_{r} \times Y_{r}}$ where $X_{\ell} \cap X_{r} = \{x\}$, $Y_{\ell} \cap Y_{r} = \{y\}$, $X_{\ell}$ is disjoint with $Y_{\ell}$ and $Y_{r}$, and $X_{r}$ is disjoint with $Y_{\ell}$ and $Y_{r}$. Additionally, let $A_{\ell} = B_{\ell} (X_{\ell} \setminus \{x\}, Y_{\ell})$ and $A_{r} = B_{r} (X_{r}, Y_{r} \setminus \{y\})$, and suppose $r = B_{\ell} (x, Y_{\ell}) \neq 0$ and $c = B_{r} (X_{r}, y) \neq 0$. Then the $2$-sum $B = B_{\ell} \oplus_{2, x, y} B_{r}$ of $B_{\ell}$ and $B_{r}$ is defined as
    \[
        B = \begin{bmatrix} A_{\ell} & 0 \\ D & A_{r} \end{bmatrix}
        \quad \text{where} \quad
        D = c \otimes r.
    \]
    Here $D \in R^{X_{r} \times Y_{\ell}}$, and the indexing is consistent everywhere.
\end{definition}

\begin{definition}
    \label{Matroid.IsSum2of}
    \uses{Matroid,StandardRepr,standardReprSum2}
    \leanok
    A matroid $M$ is a $2$-sum of matroids $M_{\ell}$ and $M_{r}$ if there exist standard $\mathbb{Z}_{2}$ representation matrices $B_{\ell}$, $B_{r}$, and $B$ (for $M_{\ell}$, $M_{r}$, and $M$, respectively) of the form given in Definition~\ref{standardReprSum2}.
\end{definition}

\begin{lemma}
    \label{Matrix.IsTotallyUnimodular.fromCols_outer}
    \uses{standardReprSum2,Matrix.IsTotallyUnimodular}
    \leanok
    Let $B_{\ell}$ and $B_{r}$ from Definition~\ref{standardReprSum2} be TU matrices (over $\mathbb{Q}$). Then $C = \begin{bmatrix} D & A_{r} \end{bmatrix}$ is TU.
\end{lemma}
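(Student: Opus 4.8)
The plan is to exhibit $C$ as a column-scaled reindexing of $B_{r}$, and then invoke Lemma~\ref{Matrix.IsTotallyUnimodular.mul_cols} together with the fact that passing to a submatrix — even one whose column map repeats columns — preserves total unimodularity. Recall from Definition~\ref{standardReprSum2} that $c = B_{r}(X_{r}, y)$ is the $y$-th column of $B_{r}$, that $A_{r} = B_{r}(X_{r}, Y_{r} \setminus \{y\})$, and that $D = c \otimes r$ with $r = B_{\ell}(x, Y_{\ell})$, so $D(i, j) = c(i) \cdot r(j)$ for $i \in X_{r}$, $j \in Y_{\ell}$. Thus, up to reordering its columns so that $y$ comes first, $B_{r}$ is exactly $\begin{bmatrix} c & A_{r} \end{bmatrix}$, and the columns of $C$ indexed by $Y_{\ell}$ are precisely the scalar multiples $r(j) \cdot c$ of that single column $c$.

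First, since $B_{\ell}$ is TU, every entry of the row vector $r$ lies in $\{0, \pm 1\}$ (each entry is a $1 \times 1$ submatrix of $B_{\ell}$). Next, define the column-index map $f \colon Y_{\ell} \cup (Y_{r} \setminus \{y\}) \to Y_{r}$ by $f(j) = y$ for $j \in Y_{\ell}$ and $f(j) = j$ for $j \in Y_{r} \setminus \{y\}$, and let $C_{0}$ be the matrix with rows indexed by $X_{r}$ whose $j$-th column is the $f(j)$-th column of $B_{r}$. Then $C_{0}$ is TU: by Lemma~\ref{Matrix.isTotallyUnimodular_iff_forall_isPartiallyUnimodular} it suffices to check that $C_{0}$ is $k$-PU for every $k$, and any $k \times k$ submatrix of $C_{0}$ (with row indices $i_{1}, \dots, i_{k}$ and column indices $j_{1}, \dots, j_{k}$, possibly with repeats) is literally the $k \times k$ submatrix of $B_{r}$ on rows $i_{1}, \dots, i_{k}$ and columns $f(j_{1}), \dots, f(j_{k})$, whose determinant lies in $\{0, \pm 1\}$ because $B_{r}$ is $k$-PU.

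Finally, scale the columns of $C_{0}$: multiply column $j$ by $g(j)$, where $g(j) = r(j) \in \{0, \pm 1\}$ for $j \in Y_{\ell}$ and $g(j) = 1$ for $j \in Y_{r} \setminus \{y\}$. By Lemma~\ref{Matrix.IsTotallyUnimodular.mul_cols} the resulting matrix is TU, and it equals $C$: for $i \in X_{r}$ and $j \in Y_{\ell}$ the entry is $g(j) \cdot B_{r}(i, f(j)) = r(j) \cdot c(i) = D(i, j)$, and for $j \in Y_{r} \setminus \{y\}$ the entry is $1 \cdot B_{r}(i, j) = A_{r}(i, j)$. Hence $C = \begin{bmatrix} D & A_{r} \end{bmatrix}$ is TU.

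I expect the main obstacle to be the index bookkeeping in Lean rather than any mathematical content: the ground sets $X_{\ell}, X_{r}, Y_{\ell}, Y_{r}$ overlap exactly in $\{x\}$ and $\{y\}$, so one must set up carefully how the column index type of $C$ embeds into $Y_{r}$ via $f$, and phrase the "repeated column" reindexing so that the existing submatrix/TU API applies cleanly. Conceptually everything reduces to three facts already available: entries of a TU matrix are in $\{0, \pm 1\}$, submatrices of TU matrices are TU (via the $k$-PU characterization), and Lemma~\ref{Matrix.IsTotallyUnimodular.mul_cols}.
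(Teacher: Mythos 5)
Your proof is correct and takes essentially the same approach as the paper: observe that $r$ has $\{0,\pm1\}$ entries because $B_\ell$ is TU, note that every column of $D$ is therefore $r(j)\cdot c$ with $c$ the $y$-column of $B_r$, and realize $C$ as $B_r$ with the $y$-column duplicated and scaled by $\{0,\pm1\}$ factors, which preserves TU. Your intermediate step through $C_0$ and the $k$-PU characterization (which explicitly permits non-injective column maps) is a slightly more explicit rendering of the paper's "adjoining zero columns, duplicating the $y$ column, and multiplying some columns by $-1$," and is well suited to the Lean API.
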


\begin{proof}
    \uses{standardReprSum2,Matrix.IsTotallyUnimodular,Matrix.IsTotallyUnimodular.mul_cols}
    \leanok
    Since $B_{\ell}$ is TU, all its entries are in $\{0, \pm 1\}$. In particular, $r$ is a $\{0, \pm 1\}$ vector. Therefore, every column of $D$ is a copy of $y$, $-y$, or the zero column. Thus, $C$ can be obtained from $B_{r}$ by adjoining zero columns, duplicating the $y$ column, and multiplying some columns by $-1$. Since all these operations preserve TUess and since $B_{r}$ is TU, $C$ is also TU.
\end{proof}

\begin{lemma}
    \label{standardReprSum2_shortTableauPivot}
    \uses{standardReprSum2,Matrix.shortTableauPivot}
    \leanok
    Let $B_{\ell}$ and $B_{r}$ be matrices from Definition~\ref{standardReprSum2}. Let $B_{\ell}'$ and $B'$ be the matrices obtained by performing a short tableau pivot on $(x_{\ell}, y_{\ell}) \in X_{\ell} \times Y_{\ell}$ in $B_{\ell}$ and $B$, respectively. Then $B' = B_{\ell}' \oplus_{2, x, y} B_{r}$.
\end{lemma}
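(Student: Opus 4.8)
The plan is to unfold both short tableau pivots via the explicit formula of Lemma~\ref{Matrix.shortTableauPivot_eq} and match, block by block, the four blocks of $B'$ against the four blocks of $B_\ell' \oplus_{2,x,y} B_r$. Write the rows of $B$ as $(X_\ell\setminus\{x\})\cup X_r$ and its columns as $Y_\ell\cup(Y_r\setminus\{y\})$; since the pivot is performed in $B$ as well as in $B_\ell$, the pivot row must satisfy $x_\ell\in X_\ell\setminus\{x\}$, and then $B(x_\ell,y_\ell)=A_\ell(x_\ell,y_\ell)=B_\ell(x_\ell,y_\ell)\ne 0$ is exactly the pivot entry used in $B_\ell$.

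First I would dispose of the two blocks lying in the columns $Y_r\setminus\{y\}$ by Lemma~\ref{Matrix.shortTableauPivot_zero} (or the analogous direct calculation): taking $X_1=X_\ell\setminus\{x\}$, $X_2=X_r$, $Y_1=Y_\ell$, $Y_2=Y_r\setminus\{y\}$, the top-right block of $B$ is zero and the pivot lies in $X_1\times Y_1$, so after the pivot the top-right block of $B'$ is still zero, its bottom-right block is still $A_r=B_r(X_r,Y_r\setminus\{y\})$ (and $B_r$ itself is not pivoted), and $\begin{bmatrix}A_\ell\\ D\end{bmatrix}$ is transformed by the short tableau pivot on $(x_\ell,y_\ell)$. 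These already agree with the top-right and bottom-right blocks of $B_\ell'\oplus_{2,x,y}B_r$, so it remains only to identify the pivot of $\begin{bmatrix}A_\ell\\ D\end{bmatrix}$ with $\begin{bmatrix}A_\ell'\\ D'\end{bmatrix}$, where $A_\ell'=B_\ell'(X_\ell\setminus\{x\},Y_\ell)$, $r'=B_\ell'(x,Y_\ell)$, and $D'=c\otimes r'$.

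For the rows indexed by $X_\ell\setminus\{x\}$ this is immediate, since there $\begin{bmatrix}A_\ell\\ D\end{bmatrix}$ coincides with $B_\ell$ and, because $x_\ell\ne x$, the formula of Lemma~\ref{Matrix.shortTableauPivot_eq} reproduces verbatim the corresponding rows of $B_\ell'$, namely $A_\ell'$. For a row $i\in X_r$ we have $i\ne x_\ell$, and the $i$-th row of $\begin{bmatrix}A_\ell\\ D\end{bmatrix}$ equals $c(i)\cdot r=c(i)\cdot B_\ell(x,\cdot)$; substituting this into the ``$i\ne x_\ell$'' cases of Lemma~\ref{Matrix.shortTableauPivot_eq} and factoring out the scalar $c(i)$ (the pivot is linear in each non-pivot row) shows that the resulting row is $c(i)$ times the pivot transform of $B_\ell(x,\cdot)$, i.e. $c(i)\,r'$, which is exactly the $i$-th row of $D'=c\otimes r'$. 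Reassembling the blocks gives $B'=\begin{bmatrix}A_\ell' & 0\\ D' & A_r\end{bmatrix}=B_\ell'\oplus_{2,x,y}B_r$; along the way one also checks that the right-hand side is well defined, since $c\ne 0$ by hypothesis and $r'$ is nonzero — if $r(y_\ell)\ne 0$ then $r'(y_\ell)=-r(y_\ell)/B_\ell(x_\ell,y_\ell)\ne 0$, and otherwise $r'=r\ne 0$.

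I do not expect a genuine obstacle here: the whole argument is a direct calculation in the spirit of Lemmas~\ref{Matrix.shortTableauPivot_zero} and~\ref{shortTableauPivot_submatrix_det_abs_eq_div}. The only points requiring care are keeping the index identifications between $B_\ell$ and the left half of $B$ straight, and the one structural observation that makes everything go through — that the rank-one block $D=c\otimes r$ scales row-wise by $c$, so the short tableau pivot, being linear in each non-pivot row, commutes with that scaling.
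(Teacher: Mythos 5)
Your proof is correct and follows essentially the same route as the paper's: both invoke Lemma~\ref{Matrix.shortTableauPivot_zero} to handle the zero block and the $A_r$ block, then verify $D' = c \otimes r'$ by direct computation. You go slightly further by spelling out the computation the paper labels ``direct calculation'' — in particular the key structural point that each non-pivot row transforms linearly under the short tableau pivot, so scaling the row by $c(i)$ commutes with pivoting, which is precisely why the rank-one structure of $D$ is preserved.
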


\begin{proof}
    \uses{standardReprSum2,Matrix.shortTableauPivot,Matrix.shortTableauPivot_zero,Matrix.shortTableauPivot_eq}
    \leanok
    Let
    \[
        B_{\ell}' = \begin{bmatrix} A_{\ell}' \\ r' \end{bmatrix}, \quad
        B' = \begin{bmatrix} B_{11}' & B_{12}' \\ B_{21}' & B_{22}' \end{bmatrix}
    \]
    where the blocks have the same dimensions as in $B_{\ell}$ and $B$, respectively. By Lemma~\ref{Matrix.shortTableauPivot_zero}, $B_{11}' = A_{\ell}'$, $B_{12}' = 0$, and $B_{22}' = A_{r}$. Equality $B_{21}' = c \otimes r'$ can be verified via a direct calculation. Thus, $B' = B_{\ell}' \oplus_{2, x, y} B_{r}$.
\end{proof}

\begin{lemma}
    \label{standardReprSum2_isTotallyUnimodular}
    \uses{standardReprSum2,Matrix.IsTotallyUnimodular}
    \leanok
    Let $B_{\ell}$ and $B_{r}$ from Definition~\ref{standardReprSum2} be TU matrices (over $\mathbb{Q}$). Then $B_{\ell} \oplus_{2, x, y} B_{r}$ is TU.
\end{lemma}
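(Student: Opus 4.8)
The plan is to prove the apparently stronger statement by induction on $k$: for every pair of TU matrices $B_{\ell}, B_{r}$ satisfying the hypotheses of Definition~\ref{standardReprSum2}, every $k \times k$ submatrix $T$ of $B_{\ell} \oplus_{2, x, y} B_{r}$ has $\det T \in \{0, \pm 1\}$. Total unimodularity of $B$ then follows by Definition~\ref{Matrix.IsTotallyUnimodular}. The base case $k = 0$ is trivial (the empty determinant is $1$), and it is immediate that every entry of $B$ lies in $\{0, \pm 1\}$, since the entries of $A_{\ell}$ and $A_{r}$ do (both $B_{\ell}$ and $B_{r}$ being TU) and the entries of $D = c \otimes r$ are products of entries of the $\{0, \pm 1\}$ vectors $c$ and $r$.

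For the inductive step, fix a $k \times k$ submatrix $T$ with $k \ge 1$ and write it according to the block structure of $B$ as $T = \begin{bmatrix} P & 0 \\ Q & R \end{bmatrix}$, where $P = A_{\ell}(I_{\ell}, J_{\ell})$, $Q = D(I_{r}, J_{\ell})$, $R = A_{r}(I_{r}, J_{r})$ for the selected row sets $I_{\ell} \subseteq X_{\ell} \setminus \{x\}$, $I_{r} \subseteq X_{r}$ and column sets $J_{\ell} \subseteq Y_{\ell}$, $J_{r} \subseteq Y_{r} \setminus \{y\}$. I would then distinguish three cases. First, if $I_{\ell} = \emptyset$, then $T = \begin{bmatrix} Q & R \end{bmatrix}$ is a submatrix of $C = \begin{bmatrix} D & A_{r} \end{bmatrix}$, which is TU by Lemma~\ref{Matrix.IsTotallyUnimodular.fromCols_outer}, so $\det T \in \{0, \pm 1\}$. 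Second, if $I_{\ell} \neq \emptyset$ but $P$ is the zero matrix (in particular if $J_{\ell} = \emptyset$), then every row of $T$ indexed by $I_{\ell}$ is identically zero -- zero on the $J_{\ell}$-columns because $P = 0$, and zero on the $J_{r}$-columns by the block structure of $B$ -- so $\det T = 0$.

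The remaining case is $I_{\ell} \neq \emptyset$ with $P \neq 0$: choose $(i_{0}, j_{0}) \in I_{\ell} \times J_{\ell}$ with $A_{\ell}(i_{0}, j_{0}) \neq 0$; since $B_{\ell}$ is TU this entry equals $\pm 1$, and $B(i_{0}, j_{0}) = A_{\ell}(i_{0}, j_{0}) \neq 0$, so we may perform a short tableau pivot in $B$ on $(i_{0}, j_{0}) \in (X_{\ell} \setminus \{x\}) \times Y_{\ell} \subseteq X_{\ell} \times Y_{\ell}$. By Lemma~\ref{standardReprSum2_shortTableauPivot} the result is $B' = B_{\ell}' \oplus_{2, x, y} B_{r}$, where $B_{\ell}'$ is the short tableau pivot of $B_{\ell}$ on $(i_{0}, j_{0})$, which is TU by Lemma~\ref{Matrix.IsTotallyUnimodular.shortTableauPivot}; hence $B'$ is again a $2$-sum of TU matrices. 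Because the pivot position $(i_{0}, j_{0})$ lies inside $T$, pivoting commutes with restriction to the index set of $T$ (this is the localization principle already used in the proof of Lemma~\ref{Matrix.shortTableauPivot_zero}), so the short tableau pivot of $T$ on $(i_{0}, j_{0})$ is a submatrix of $B'$. After reindexing $T$ by $\{1, \dots, k\}$, Lemma~\ref{shortTableauPivot_submatrix_det_abs_eq_div} produces a $(k-1) \times (k-1)$ submatrix $T''$ of that pivot -- hence of $B'$ -- with $|\det T''| = |\det T| / |A_{\ell}(i_{0}, j_{0})| = |\det T|$. Applying the induction hypothesis to the $2$-sum $B'$ and the size-$(k-1)$ submatrix $T''$ gives $\det T'' \in \{0, \pm 1\}$, whence $\det T \in \{0, \pm 1\}$, completing the induction.

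I expect the main obstacle to be the bookkeeping around the pivot in the third case: checking that $B' = B_{\ell}' \oplus_{2, x, y} B_{r}$ is still a legitimate instance of Definition~\ref{standardReprSum2} -- in particular that $r' = B_{\ell}'(x, Y_{\ell})$ remains nonzero, which holds because a short tableau pivot on a row other than $x$ sends the row $x$ to zero if and only if that row was already zero (visible from the formula in Lemma~\ref{Matrix.shortTableauPivot_eq}); that the short tableau pivot localizes correctly, so that ``pivot then restrict'' equals ``restrict then pivot'' whenever the pivot entry is retained; and that the reindexing needed to invoke Lemma~\ref{shortTableauPivot_submatrix_det_abs_eq_div} disturbs neither determinants nor the submatrix relation. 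The combinatorial core -- the three-way case split together with the determinant identity of Lemma~\ref{shortTableauPivot_submatrix_det_abs_eq_div} -- is then routine.
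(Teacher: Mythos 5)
Your proposal matches the paper's proof essentially step for step: both use the same strengthened induction on the submatrix size (over all TU instances of Definition~\ref{standardReprSum2}), the same case split on whether the submatrix touches $X_{\ell}$ and whether a nonzero entry is available in the $A_{\ell}$ block, and the same triple of lemmas -- Lemma~\ref{Matrix.IsTotallyUnimodular.fromCols_outer} for the bottom-rows case, Lemma~\ref{standardReprSum2_shortTableauPivot} together with Lemma~\ref{Matrix.IsTotallyUnimodular.shortTableauPivot} to stay inside the family of $2$-sums of TU matrices after a pivot, and Lemma~\ref{shortTableauPivot_submatrix_det_abs_eq_div} to drop the submatrix size by one. (The paper routes the size induction through the $k$-PU reformulation of Lemma~\ref{Matrix.isTotallyUnimodular_iff_forall_isPartiallyUnimodular}, which is cosmetically different but logically the same; your observation that $r' = B_{\ell}'(x, Y_{\ell})$ stays nonzero after a pivot in a different row is a real, correct bookkeeping point that the paper leaves implicit inside Lemma~\ref{standardReprSum2_shortTableauPivot}.)
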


\begin{proof}
    \uses{Matrix.isTotallyUnimodular_iff_forall_isPartiallyUnimodular,standardReprSum2,Matrix.IsTotallyUnimodular.fromCols_outer,shortTableauPivot_submatrix_det_abs_eq_div,standardReprSum2_shortTableauPivot,Matrix.IsTotallyUnimodular.shortTableauPivot}
    \leanok
    By Lemma~\ref{Matrix.isTotallyUnimodular_iff_forall_isPartiallyUnimodular}, it suffices to show that $B_{\ell} \oplus_{2, x, y} B_{r}$ is $k$-PU for every $k \in \mathbb{N}$. We prove this claim by induction on $k$. The base case with $k = 1$ holds, since all entries of $B_{\ell} \oplus_{2, x, y} B_{r}$ are in $\{0, \pm 1\}$ by construction.

    Suppose that for some $k \in \mathbb{N}$ we know that for any TU matrices $B_{\ell}'$ and $B_{r}'$ (from Definition~\ref{standardReprSum2}) their $2$-sum $B_{\ell}' \oplus_{2, x, y} B_{r}'$ is $k$-PU. Now, given TU matrices $B_{\ell}$ and $B_{r}$ (from Definition~\ref{standardReprSum2}), our goal is to show that $B = B_{\ell} \oplus_{2, x, y} B_{r}$ is $(k + 1)$-PU, i.e., that every $(k + 1) \times (k + 1)$ submatrix $T$ of $B$ has $\det T \in \{0, \pm 1\}$.

    First, suppose that $T$ has no rows in $X_{\ell}$. Then $T$ is a submatrix of $\begin{bmatrix} D & A_{r} \end{bmatrix}$, which is TU by Lemma~\ref{Matrix.IsTotallyUnimodular.fromCols_outer}, so $\det T \in \{0, \pm 1\}$. Thus, we may assume that $T$ contains a row $x_{\ell} \in X_{\ell}$.

    Next, note that without loss of generality we may assume that there exists $y_{\ell} \in Y_{\ell}$ such that $T (x_{\ell}, y_{\ell}) \neq 0$. Indeed, if $T (x_{\ell}, y) = 0$ for all $y$, then $\det T = 0$ and we are done, and $T (x_{\ell}, y) = 0$ holds whenever $y \in Y_{r}$.

    Since $B$ is $1$-PU, all entries of $T$ are in $\{0, \pm 1\}$, and hence $T (x_{\ell}, y_{\ell}) \in \{\pm 1\}$. Thus, by Lemma~\ref{shortTableauPivot_submatrix_det_abs_eq_div}, performing a short tableau pivot in $T$ on $(x_{\ell}, y_{\ell})$ yields a matrix that contains a $k \times k$ submatrix $T''$ such that $|\det T| = |\det T''|$. Since $T$ is a submatrix of $B$, matrix $T''$ is a submatrix of the matrix $B'$ resulting from performing a short tableau pivot in $B$ on the same entry $(x_{\ell}, y_{\ell})$. By Lemma~\ref{standardReprSum2_shortTableauPivot}, we have $B' = B_{\ell}' \oplus_{2, x, y} B_{r}$ where $B_{\ell}'$ is the result of performing a short tableau pivot in $B_{\ell}$ on $(x_{\ell}, y_{\ell})$. Since $B_{\ell}$ is TU, by Lemma \ref{Matrix.IsTotallyUnimodular.shortTableauPivot}, $B_{\ell}'$ is also TU. Thus, by the inductive hypothesis applied to $T''$ and $B_{\ell}' \oplus_{2, x, y} B_{r}$, we have $\det T'' \in \{0, \pm 1\}$. Since $|\det T| = |\det T''|$, we conclude that $\det T \in \{0, \pm 1\}$.
\end{proof}

\begin{theorem}
    \label{Matroid.IsSum2of.isRegular}
    \uses{Matroid.IsRegular,Matroid.IsSum2of}
    \leanok
    Let $M$ be a $2$-sum of regular matroids $M_{\ell}$ and $M_{r}$. Then $M$ is also regular.
\end{theorem}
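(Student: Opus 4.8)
The plan is to follow the template of the $1$-sum case (Theorem~\ref{Matroid.IsSum1of.isRegular}), replacing the $1$-sum ingredients by their $2$-sum counterparts. First I would apply Definition~\ref{Matroid.IsSum2of} to fix standard $\mathbb{Z}_2$ representation matrices $B_\ell \in \mathbb{Z}_2^{X_\ell \times Y_\ell}$, $B_r \in \mathbb{Z}_2^{X_r \times Y_r}$, and $B$ of $M_\ell$, $M_r$, and $M$, all of the shape prescribed by Definition~\ref{standardReprSum2}; in particular $X_\ell \cap X_r = \{x\}$, $Y_\ell \cap Y_r = \{y\}$, the row $r = B_\ell(x, Y_\ell)$ and the column $c = B_r(X_r, y)$ are nonzero, and $B = B_\ell \oplus_{2, x, y} B_r$, so $B$ carries the block decomposition into $A_\ell$, $D = c \otimes r$, and $A_r$ of that definition.

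Next, since $M_\ell$ and $M_r$ are regular, Lemma~\ref{StandardRepr.toMatroid_isRegular_iff_hasTuSigning} supplies TU signings $B_\ell' \in \mathbb{Q}^{X_\ell \times Y_\ell}$ of $B_\ell$ and $B_r' \in \mathbb{Q}^{X_r \times Y_r}$ of $B_r$. I would then observe that $B_\ell'$ and $B_r'$ still satisfy all the structural hypotheses of Definition~\ref{standardReprSum2}: their index sets are literally those of $B_\ell$ and $B_r$, so the required disjointness is inherited, and because $|B_\ell'(i,j)| = B_\ell(i,j)$ and $|B_r'(i,j)| = B_r(i,j)$ entrywise, the row $r' = B_\ell'(x, Y_\ell)$ and the column $c' = B_r'(X_r, y)$ have the same supports as $r$ and $c$ and are therefore nonzero. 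Hence the $2$-sum $B' := B_\ell' \oplus_{2, x, y} B_r'$ is well-defined, and Lemma~\ref{standardReprSum2_isTotallyUnimodular} gives that it is TU.

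It then remains to verify that $B'$ is a signing of $B$, i.e.\ $|B'(i,j)| = B(i,j)$ for all $i, j$. On the block coming from $A_\ell$ and on the block coming from $A_r$ this is immediate from the fact that $B_\ell'$ and $B_r'$ sign $B_\ell$ and $B_r$; the zero block is common to both; and on the $D$-block one has $|(c' \otimes r')(i,j)| = |c'(i)| \cdot |r'(j)| = c(i) \cdot r(j) = (c \otimes r)(i,j)$, the last equality holding because a product of two $\{0,1\}$ values over $\mathbb{Z}_2$ is $1$ exactly when both factors are. Thus $B'$ is a TU signing of the standard $\mathbb{Z}_2$ representation $B$ of $M$, and a final appeal to Lemma~\ref{StandardRepr.toMatroid_isRegular_iff_hasTuSigning} yields that $M$ is regular.

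I do not anticipate a genuine obstacle here: all the substantive content, namely that the $2$-sum of TU matrices of the prescribed form is TU, is already contained in Lemma~\ref{standardReprSum2_isTotallyUnimodular}. The only care needed is the bookkeeping in the middle step — checking that passing from $B_\ell, B_r$ to their signings preserves the index-set disjointness and the nonvanishing of $r$ and $c$, so that the $2$-sum of the signings is even defined — together with the routine entrywise verification that $c' \otimes r'$ signs $c \otimes r$.
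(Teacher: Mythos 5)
Your proposal is correct and follows the same route as the paper: obtain TU signings $B_\ell'$, $B_r'$ via Lemma~\ref{StandardRepr.toMatroid_isRegular_iff_hasTuSigning}, form $B' = B_\ell' \oplus_{2,x,y} B_r'$, invoke Lemma~\ref{standardReprSum2_isTotallyUnimodular} for TUness, check that $B'$ signs $B$ blockwise, and conclude with Lemma~\ref{StandardRepr.toMatroid_isRegular_iff_hasTuSigning} again. You merely spell out the ``direct calculation'' (well-definedness of the $2$-sum of signings and the $D$-block computation) that the paper leaves implicit.
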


\begin{proof}
    \uses{StandardRepr,Matroid.IsSum2of,StandardRepr.toMatroid_isRegular_iff_hasTuSigning,standardReprSum2_isTotallyUnimodular,Matrix.IsTuSigningOf}
    \leanok
    Let $B_{\ell}$, $B_{r}$, and $B$ be standard $\mathbb{Z}_{2}$ representation matrices from Definition~\ref{Matroid.IsSum2of}. Since $M_{\ell}$ and $M_{r}$ are regular, by Lemma~\ref{StandardRepr.toMatroid_isRegular_iff_hasTuSigning}, $B_{\ell}$ and $B_{r}$ have TU signings $B_{\ell}'$ and $B_{r}'$, respectively. Then $B' = B_{\ell}' \oplus_{2, x, y} B_{r}'$ is a TU signing of $B$. Indeed, $B'$ is TU by Lemma~\ref{standardReprSum2_isTotallyUnimodular}, and a direct calculation verifies that $B'$ is a signing of $B$. Thus, $M$ is regular by Lemma~\ref{StandardRepr.toMatroid_isRegular_iff_hasTuSigning}.
\end{proof}

\section{Regularity of 3-Sum}

\subsection{Definition}

\begin{definition}
    \label{standardReprSum3}
    \uses{StandardRepr}
    \leanok
    Let $X_{\ell}$, $Y_{\ell}$, $X_{r}$, and $Y_{r}$ be sets satisfying the following properties:
    \begin{itemize}
        \item $X_{\ell} \cap X_{r} = \{x_{2}, x_{1}, x_{0}\}$ for some distinct $x_{0}$, $x_{1}$, and $x_{2}$;
        \item $Y_{\ell} \cap Y_{r} = \{y_{0}, y_{1}, y_{2}\}$ for some distinct $y_{0}$, $y_{1}$, and $y_{2}$;
        \item $X_{\ell}$ is disjoint with $Y_{r}$; and
        \item $Y_{\ell}$ is disjoint with $X_{r}$.
    \end{itemize}
    Let $B_{\ell} \in \mathbb{Z}_{2}^{X_{\ell} \times Y_{\ell}}$ and $B_{r} \in \mathbb{Z}_{2}^{X_{r} \times Y_{r}}$ be matrices of the form
    \begin{center}
        \noindent
        \begin{tikzpicture}
            \begin{scope}[scale=0.5, shift={(-5.5, -3)}]
                \node[anchor=east] at (0, 3) {$B_{\ell} =$};
                \draw (0, 0) -- (5, 0) -- (5, 6) -- (0, 6) -- cycle;
                \draw (0, 2) -- (5, 2);
                \draw (4, 0) -- (4, 6);
                \draw (2, 0) -- (2, 3) -- (5, 3);
                \draw (4, 1) -- (5, 1);
                \draw (3, 2) -- (3, 3);
                \node at (2, 4) {$A_{\ell}$};
                \node at (1, 1) {$D_{\ell}$};
                \node at (4.5, 4) {$0$};
                \node at (2.5, 2.5) {$1$};
                \node at (3.5, 2.5) {$1$};
                \node at (4.5, 2.5) {$0$};
                \node at (4.5, 1.5) {$1$};
                \node at (4.5, 0.5) {$1$};
                \node at (3, 1) {$D_{0}$};
            \end{scope}
            \node[anchor=west] at (0, 0) {and};
            \begin{scope}[scale=0.5, shift={(3.5, -2.5)}]
                \node[anchor=east] at (0, 2.5) {$B_{r} =$};
                \draw (0, 0) -- (6, 0) -- (6, 5) -- (0, 5) -- cycle;
                \draw (2, 0) -- (2, 5);
                \draw (0, 4) -- (6, 4);
                \draw (0, 2) -- (3, 2) -- (3, 5);
                \draw (1, 4) -- (1, 5);
                \draw (2, 3) -- (3, 3);
                \node at (1, 1) {$D_{r}$};
                \node at (1, 3) {$D_{0}$};
                \node at (0.5, 4.5) {$1$};
                \node at (1.5, 4.5) {$1$};
                \node at (4, 4.5) {$0$};
                \node at (2.5, 2.5) {$1$};
                \node at (2.5, 3.5) {$1$};
                \node at (2.5, 4.5) {$0$};
                \node at (4, 2) {$A_{r}$};
            \end{scope}
        \end{tikzpicture}
    \end{center}
    where $D_{0}$ is invertible. Then the $3$-sum $B = B_{\ell} \oplus_{3} B_{r}$ of $B_{\ell}$ and $B_{r}$ is defined as
    \begin{center}
        \noindent
        \begin{tikzpicture}
            \begin{scope}[scale=0.5, shift={(0, -4)}]
                \node[anchor=east] at (0, 4) {$B =$};
                \draw (0, 0) -- (8, 0) -- (8, 8) -- (0, 8) -- cycle;
                \draw (0, 4) -- (8, 4);
                \draw (4, 0) -- (4, 8);
                \draw (0, 2) -- (5, 2) -- (5, 5) -- (2, 5) -- (2, 0);
                \draw (3, 4) -- (3, 5);
                \draw (4, 3) -- (5, 3);
                \node at (2, 6) {$A_{\ell}$};
                \node at (6, 2) {$A_{r}$};
                \node at (6, 6) {$0$};
                \node at (1, 1) {$D_{\ell r}$};
                \node at (1, 3) {$D_{\ell}$};
                \node at (3, 1) {$D_{r}$};
                \node at (3, 3) {$D_{0}$};
                \node at (2.5, 4.5) {$1$};
                \node at (3.5, 4.5) {$1$};
                \node at (4.5, 4.5) {$0$};
                \node at (4.5, 3.5) {$1$};
                \node at (4.5, 2.5) {$1$};
            \end{scope}
            \node[anchor=west] at (4.25, 0) {where $D_{\ell r} = D_{r} \cdot (D_{0})^{-1} \cdot D_{\ell}$.};
        \end{tikzpicture}
    \end{center}
    Here the indexing is consistent between all the matrices, $D_{0} \in \mathbb{Z}_{2}^{\{x_{1}, x_{0}\} \times \{y_{0}, y_{1}\}}$, and the submatrix
    \begin{tikzpicture}
        \begin{scope}[scale=0.5, shift={(-3.5, -1.5)}]
            \draw (0, 0) -- (3, 0) -- (3, 3) -- (0, 3) -- cycle;
            \draw (0, 2) -- (3, 2);
            \draw (2, 0) -- (2, 3);
            \draw (1, 2) -- (1, 3);
            \draw (2, 1) -- (3, 1);
            \node at (1, 1) {$D_{0}$};
            \node at (0.5, 2.5) {$1$};
            \node at (1.5, 2.5) {$1$};
            \node at (2.5, 2.5) {$0$};
            \node at (2.5, 1.5) {$1$};
            \node at (2.5, 0.5) {$1$};
        \end{scope}
    \end{tikzpicture}
    is indexed by $\{x_{2}, x_{1}, x_{0}\} \times \{y_{0}, y_{1}, y_{2}\}$ in $B_{\ell}$, $B_{r}$, and $B$.
\end{definition}

\begin{definition}
    \label{Matroid.IsSum3of}
    \uses{Matroid,StandardRepr,standardReprSum3}
    \leanok
    A matroid $M$ is a $3$-sum of matroids $M_{\ell}$ and $M_{r}$ if there exist standard $\mathbb{Z}_{2}$ representation matrices $B_{\ell}$, $B_{r}$, and $B$ (for $M_{\ell}$, $M_{r}$, and $M$, respectively) of the form given in Definition~\ref{standardReprSum3}.
\end{definition}

\subsection{Canonical Signing}

\begin{lemma}
    \label{Matrix.isUnit_2x2}
    \uses{Matrix}
    \leanok
    Let $D_{0} \in \mathbb{Z}_{2}^{\{x_{1}, x_{0}\} \times \{y_{0}, y_{1}\}}$ be an invertible matrix. Then, up to reindexing of rows and columns, either $D_{0} = \begin{bmatrix} 1 & 0 \\ 0 & 1 \\ \end{bmatrix}$ or $D_{0} = \begin{bmatrix} 1 & 1 \\ 0 & 1 \\ \end{bmatrix}$.
\end{lemma}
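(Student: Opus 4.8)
The plan is to reduce the statement to a finite case analysis. Over $\mathbb{Z}_2$ a square matrix is invertible precisely when its determinant equals $1$ (the only nonzero element), so by Lemma~\ref{Matrix.isUnit_iff_isUnit_det} it suffices to enumerate the $2\times 2$ matrices over $\mathbb{Z}_2$ of determinant $1$. Writing $D_0 = \begin{bmatrix} a & b \\ c & d \end{bmatrix}$, the determinant is $ad + bc$ (signs are irrelevant in characteristic $2$), which equals $1$ exactly when precisely one of the products $ad$, $bc$ is nonzero; as a sanity check, this yields the $|GL_2(\mathbb{Z}_2)| = 6$ invertible matrices.

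First I would extract the structural consequences of invertibility: $D_0$ has no zero row and no zero column, and its two rows are distinct (and likewise its columns), since otherwise the determinant would vanish. Hence each row of $D_0$ is one of the three nonzero vectors $(1,0)$, $(0,1)$, $(1,1)$, and the two rows are distinct. This leaves exactly three unordered possibilities for the pair of rows, namely $\{(1,0),(0,1)\}$, $\{(1,0),(1,1)\}$, and $\{(0,1),(1,1)\}$; a quick check confirms that in each of these the two columns are automatically distinct and nonzero, so all three genuinely occur.

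Next I would dispose of these cases by exhibiting explicit reindexings. In the first case $D_0$ is, after possibly swapping its two rows, equal to $\begin{bmatrix} 1 & 0 \\ 0 & 1 \end{bmatrix}$. In each of the remaining two cases $D_0$ has exactly one zero entry, and swapping rows and/or columns moves that zero into the lower-left position, which yields $\begin{bmatrix} 1 & 1 \\ 0 & 1 \end{bmatrix}$. Concretely, $\begin{bmatrix} 1 & 0 \\ 1 & 1 \end{bmatrix}$ becomes $\begin{bmatrix} 1 & 1 \\ 0 & 1 \end{bmatrix}$ after swapping both the rows and the columns, while $\begin{bmatrix} 0 & 1 \\ 1 & 1 \end{bmatrix}$ becomes it after swapping the rows alone; the variants obtained by pre-swapping rows are handled identically.

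There is no real obstacle here: the only point requiring care is the bookkeeping of the reindexing equivalences on the two-element index sets $\{x_1, x_0\}$ and $\{y_0, y_1\}$, i.e. producing the explicit permutations witnessing ``up to reindexing''. In the Lean formalization this is where the small amount of work lies, after which the finite enumeration can be discharged by decidability.
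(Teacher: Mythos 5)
Your proposal is correct and takes the same approach as the paper, which simply says ``brute force'': you reduce to a finite case analysis over $\mathbb{Z}_2$, enumerate the six invertible $2\times 2$ matrices (or equivalently the three unordered pairs of distinct nonzero rows), and show each reduces to one of the two canonical forms under row/column swaps. The only difference is that you spell out the bookkeeping explicitly, which the paper leaves implicit.
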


\begin{proof}
    \uses{Matrix,Matrix.det}
    \leanok
    Brute force.
\end{proof}

For the sake of simplicity of notation, going forward we assume that the submatrix $D_{0}$ in Definition~\ref{standardReprSum3} falls into one of the two special cases presented in Lemma~\ref{Matrix.isUnit_2x2}.

\begin{definition}
    \label{matrix3x3signed}
    \leanok
    We call $D_{0}' \in \mathbb{Q}^{\{x_{1}, x_{0}\} \times \{y_{0}, y_{1}\}}$ the canonical signing of $D_{0} \in \mathbb{Z}_{2}^{\{x_{1}, x_{0}\} \times \{y_{0}, y_{1}\}}$ if
    \[
        D_{0} = \begin{bmatrix}
            1 & 0 \\
            0 & 1 \\
        \end{bmatrix}
        \quad \text{and} \quad
        D_{0}' = \begin{bmatrix}
            1 & 0 \\
            0 & -1
        \end{bmatrix},
        \quad \text{or} \quad
        D_{0} = \begin{bmatrix}
            1 & 1 \\
            0 & 1
        \end{bmatrix}
        \quad \text{and} \quad
        D_{0}' = \begin{bmatrix}
            1 & 1 \\
            0 & 1
        \end{bmatrix}.
    \]
    Similarly, we call $S' \in \mathbb{Q}^{\{x_{2}, x_{1}, x_{0}\} \times \{y_{0}, y_{1}, y_{2}\}}$ the canonical signing of $S \in \mathbb{Z}_{2}^{\{x_{2}, x_{1}, x_{0}\} \times \{y_{0}, y_{1}, y_{2}\}}$ if
    \begin{center}
        \noindent
        \begin{tikzpicture}
            \begin{scope}[scale=0.5, shift={(-3.5, -1.5)}]
                \node[anchor=east] at (0, 1.5) {$S =$};
                \draw (0, 0) -- (3, 0) -- (3, 3) -- (0, 3) -- cycle;
                \draw (0, 2) -- (3, 2);
                \draw (2, 0) -- (2, 3);
                \draw (1, 2) -- (1, 3);
                \draw (2, 1) -- (3, 1);
                \node at (1, 1) {$D_{0}$};
                \node at (0.5, 2.5) {$1$};
                \node at (1.5, 2.5) {$1$};
                \node at (2.5, 2.5) {$0$};
                \node at (2.5, 1.5) {$1$};
                \node at (2.5, 0.5) {$1$};
            \end{scope}
            \node[anchor=west] at (0, 0) {and};
            \begin{scope}[scale=0.5, shift={(3.5, -1.5)}]
                \node[anchor=east] at (0, 1.5) {$S' =$};
                \draw (0, 0) -- (3, 0) -- (3, 3) -- (0, 3) -- cycle;
                \draw (0, 2) -- (3, 2);
                \draw (2, 0) -- (2, 3);
                \draw (1, 2) -- (1, 3);
                \draw (2, 1) -- (3, 1);
                \node at (1, 1) {$D_{0}'$};
                \node at (0.5, 2.5) {$1$};
                \node at (1.5, 2.5) {$1$};
                \node at (2.5, 2.5) {$0$};
                \node at (2.5, 1.5) {$1$};
                \node at (2.5, 0.5) {$1$};
            \end{scope}
        \end{tikzpicture}
    \end{center}
    To simplify notation, going forward we use $D_{0}$, $D_{0}'$, $S$, and $S'$ to refer to the matrices of the form above.
    Observe that the canonical signing $S'$ of $S$ (from Definition~\ref{matrix3x3signed}) is TU.
\end{definition}

\begin{lemma}
    \label{Matrix.HasTuCanonicalSigning.toCanonicalSigning_submatrix3x3}
    \uses{Matrix.IsTuSigningOf,matrix3x3signed}
    \leanok
    Let $Q$ be a TU signing of $S$ (from Definition~\ref{matrix3x3signed}). Let $u \in \{0, \pm 1\}^{\{x_{2}, x_{1}, x_{0}\}}$, $v \in \{0, \pm 1\}^{\{y_{0}, y_{1}, y_{2}\}}$, and $Q'$ be defined as follows:
    \begin{align*}
        u(i) &= \begin{cases}
            Q (x_{2}, y_{0}) \cdot Q (x_{0}, y_{0}), & i = x_{0}, \\
            Q (x_{2}, y_{0}) \cdot Q (x_{0}, y_{0}) \cdot Q (x_{0}, y_{2}) \cdot Q (x_{1}, y_{2}), & i = x_{1}, \\
            1, & i = x_{2}, \\
        \end{cases} \\
        v(j) &= \begin{cases}
            Q (x_{2}, y_{0}), & j = y_{0}, \\
            Q (x_{2}, y_{1}), & j = y_{1}, \\
            Q (x_{2}, y_{0}) \cdot Q (x_{0}, y_{0}) \cdot Q (x_{0}, y_{2}), & j = y_{2}, \\
        \end{cases} \\
        Q' (i, j) &= Q (i, j) \cdot u(i) \cdot v(j) \quad \forall i \in \{x_{2}, x_{1}, x_{0}\}, \ \forall j \in \{y_{0}, y_{1}, y_{2}\}.
    \end{align*}
    Then $Q' = S'$ (from Definition~\ref{matrix3x3signed}).
\end{lemma}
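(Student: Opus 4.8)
The plan is to establish the equality $Q' = S'$ by a direct, essentially finite computation, organized around three observations. Since $Q$ is a TU signing of $S$ and all entries of $S$ lie in $\{0,1\}$, every entry $Q(i,j)$ lies in $\{0,\pm 1\}$, and $Q(i,j)=0$ exactly when $S(i,j)=0$; in particular every factor appearing in the definitions of $u$ and $v$ is a nonzero entry of $Q$, so $u$ and $v$ take values in $\{\pm 1\}$. Moreover $Q'$ is obtained from $Q$ by multiplying its rows by the factors $u$ and its columns by the factors $v$, all in $\{\pm 1\}$, so by Lemmas~\ref{Matrix.IsTotallyUnimodular.mul_rows} and~\ref{Matrix.IsTotallyUnimodular.mul_cols} the matrix $Q'$ is TU, and it has exactly the same support as $Q$, hence as $S$ and $S'$. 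Finally, by the standing assumption following Lemma~\ref{Matrix.isUnit_2x2}, $D_0$ is one of the two explicit matrices of Definition~\ref{matrix3x3signed}, so it suffices to treat those two cases; in both, the $(x_0,y_0)$- and $(x_1,y_1)$-entries of $D_0$ equal $1$, which is what makes $u$ well-defined.

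Next I would compute the entries of $Q'$ that the particular choice of $u,v$ forces. The formulas for $u$ and $v$ are precisely the triangular solution of the requirements $Q'(x_2,y_0)=Q'(x_2,y_1)=Q'(x_0,y_0)=Q'(x_0,y_2)=Q'(x_1,y_2)=1$: indeed $Q'(x_2,y_0)=Q(x_2,y_0)\,u(x_2)\,v(y_0)=Q(x_2,y_0)^2=1$, and similarly $Q'(x_2,y_1)=1$; then $Q'(x_0,y_0)=Q(x_0,y_0)^2\,Q(x_2,y_0)^2=1$; then $Q'(x_0,y_2)=Q(x_0,y_2)^2\,\bigl(Q(x_2,y_0)\,Q(x_0,y_0)\bigr)^2=1$; and finally $Q'(x_1,y_2)=Q(x_1,y_2)^2\,\bigl(Q(x_2,y_0)\,Q(x_0,y_0)\,Q(x_0,y_2)\bigr)^2=1$. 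Also $Q'(x_2,y_2)=Q(x_2,y_2)=0$. So $Q'$ already agrees with $S'$ on these six positions, and $Q'$ and $S'$ both vanish off the support of $S$.

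It then remains to determine the one or two support entries of $Q'$ not yet pinned, using that $Q'$ is TU. If $D_0=\begin{bmatrix}1&0\\0&1\end{bmatrix}$, the only undetermined entry is $Q'(x_1,y_1)$, and a direct expansion of the $3\times 3$ determinant of $Q'$ (whose remaining entries are now all known) gives $|\det Q'| = |Q'(x_1,y_1)+1|$; since $\det Q'\in\{0,\pm 1\}$ while $Q'(x_1,y_1)\in\{\pm 1\}$, this forces $Q'(x_1,y_1)=-1$, matching $D_0'=\begin{bmatrix}1&0\\0&-1\end{bmatrix}$. If $D_0=\begin{bmatrix}1&1\\0&1\end{bmatrix}$, the undetermined entries are $Q'(x_0,y_1)$ and $Q'(x_1,y_1)$; the $2\times 2$ minor of $Q'$ on rows $\{x_2,x_0\}$, columns $\{y_0,y_1\}$ has absolute value $|Q'(x_0,y_1)-1|$, which being in $\{0,1\}$ forces $Q'(x_0,y_1)=1$, and then the minor on rows $\{x_0,x_1\}$, columns $\{y_1,y_2\}$ has absolute value $|Q'(x_0,y_1)-Q'(x_1,y_1)|$, forcing $Q'(x_1,y_1)=1$, matching $D_0'=D_0$. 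In either case every entry of $Q'$ now coincides with the corresponding entry of $S'$, so $Q'=S'$.

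I expect the main obstacle to be the sign bookkeeping in the last two steps, together with fixing once and for all the convention for how $D_0$ sits inside the $3\times 3$ matrices — i.e., which of $x_0,x_1$ and $y_0,y_1$ index its rows and columns — since the argument is sensitive to that choice even though it is forced by the requirement $Q(x_0,y_0)\ne 0$. None of it is deep; in the Lean development the statement should reduce, once one records that the entries of $Q$ lie in $\{0,\pm 1\}$ with support equal to that of $S$, to a finite case analysis dispatched by \texttt{decide} or \texttt{fin\_cases}.
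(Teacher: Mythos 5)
Your proposal is correct and follows essentially the same approach as the paper: verify that $Q'$ is a TU signing of $S$ with the prescribed entries forced to $1$ or $0$ by the choice of $u,v$, then pin down the remaining entries $Q'(x_0,y_1)$ and $Q'(x_1,y_1)$ by TU constraints coming from the $2\times 2$ minor on $\{x_2,x_0\}\times\{y_0,y_1\}$, the $2\times 2$ minor on $\{x_1,x_0\}\times\{y_1,y_2\}$, and the full $3\times 3$ determinant. The only cosmetic difference is the order of case splitting (you split on $D_0$ first; the paper handles $Q'(x_0,y_1)$ uniformly and then splits for $Q'(x_1,y_1)$), which does not affect the substance.
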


\begin{proof}
    \uses{Matrix.IsTuSigningOf,Matrix.IsTotallyUnimodular.mul_rows,Matrix.IsTotallyUnimodular.mul_cols,Matrix.IsTotallyUnimodular}
    \leanok
    Since $Q$ is a TU signing of $S$ and $Q'$ is obtained from $Q$ by multiplying rows and columns by $\pm 1$ factors, $Q'$ is also a TU signing of $S$. By construction, we have
    \begin{align*}
        Q' (x_{2}, y_{0}) &= Q (x_{2}, y_{0}) \cdot 1 \cdot Q (x_{2}, y_{0}) = 1, \\
        Q' (x_{2}, y_{1}) &= Q (x_{2}, y_{1}) \cdot 1 \cdot Q (x_{2}, y_{1}) = 1, \\
        Q' (x_{2}, y_{2}) &= 0, \\
        Q' (x_{0}, y_{0}) &= Q (x_{0}, y_{0}) \cdot (Q (x_{2}, y_{0}) \cdot Q (x_{0}, y_{0})) \cdot Q (x_{2}, y_{0}) = 1, \\
        Q' (x_{0}, y_{1}) &= Q (x_{0}, y_{1}) \cdot (Q (x_{2}, y_{0}) \cdot Q (x_{0}, y_{0})) \cdot Q (x_{2}, y_{1}), \\
        Q' (x_{0}, y_{2}) &= Q (x_{0}, y_{2}) \cdot (Q (x_{2}, y_{0}) \cdot Q (x_{0}, y_{0})) \cdot (Q (x_{2}, y_{0}) \cdot Q (x_{0}, y_{0}) \cdot Q (x_{0}, y_{2})) = 1, \\
        Q' (x_{1}, y_{0}) &= 0, \\
        Q' (x_{1}, y_{1}) &= Q (x_{1}, y_{1}) \cdot (Q (x_{2}, y_{0}) \cdot Q (x_{0}, y_{0}) \cdot Q (x_{0}, y_{2}) \cdot Q (x_{1}, y_{2})) \cdot (Q (x_{2}, y_{1})), \\
        Q' (x_{1}, y_{2}) &= Q (x_{1}, y_{2}) \cdot (Q (x_{2}, y_{0}) \cdot Q (x_{0}, y_{0}) \cdot Q (x_{0}, y_{2}) \cdot Q (x_{1}, y_{2})) \cdot (Q (x_{2}, y_{0}) \cdot Q (x_{0}, y_{0}) \cdot Q (x_{0}, y_{2})) = 1.
    \end{align*}
    Thus, it remains to show that $Q' (x_{0}, y_{1}) = S' (x_{0}, y_{1})$ and $Q' (x_{1}, y_{1}) = S' (x_{1}, y_{1})$.

    Consider the entry $Q' (x_{0}, y_{1})$. If $D_{0} (x_{0}, y_{1}) = 0$, then $Q' (x_{0}, y_{1}) = 0 = S' (x_{0}, y_{1})$. Otherwise, we have $D_{0} (x_{0}, y_{1}) = 1$, and so $Q' (x_{0}, y_{1}) \in \{\pm 1\}$, as $Q'$ is a signing of $S$. If $Q' (x_{0}, y_{1}) = -1$, then
    \[
        \det Q' (\{x_{0}, x_{2}\}, \{y_{0}, y_{1}\}) = \det \begin{bmatrix} 1 & -1 \\ 1 & 1 \end{bmatrix} = 2 \notin \{0, \pm 1\},
    \]
    which contradicts TUness of $Q'$. Thus, $Q' (x_{0}, y_{1}) = 1 = S' (x_{0}, y_{1})$.

    Consider the entry $Q' (x_{1}, y_{1})$. Since $Q'$ is a signing of $S$, we have $Q' (x_{1}, y_{1}) \in \{\pm 1\}$. Consider two cases.
    \begin{enumerate}
        \item Suppose that $D_{0} = \begin{bmatrix} 1 & 0 \\ 0 & 1 \end{bmatrix}$. If $Q' (x_{1}, y_{1}) = 1$, then
        $
            \det Q = \det \begin{bmatrix}
                1 & 1 & 0 \\
                1 & 0 & 1 \\
                0 & 1 & 1
            \end{bmatrix} = -2 \notin \{0, \pm 1\},
        $
        which contradicts TUness of $Q'$. Thus, $Q' (x_{1}, y_{1}) = -1 = S' (x_{1}, y_{1})$.
        \item Suppose that $D_{0} = \begin{bmatrix} 1 & 1 \\ 0 & 1 \end{bmatrix}$. If $Q' (x_{1}, y_{1}) = -1$, then
        $
            \det Q (\{x_{1}, x_{0}\}, \{y_{1}, y_{2}\}) = \det \begin{bmatrix}
                1 & 1 \\
                -1 & 1
            \end{bmatrix} = 2 \notin \{0, \pm 1\},
        $
        which contradicts TUness of $Q'$. Thus, $Q' (x_{1}, y_{1}) = 1 = S' (x_{1}, y_{1})$.
    \end{enumerate}
\end{proof}

\begin{definition}
    \label{Matrix.toCanonicalSigning}
    \uses{Matrix.IsTotallyUnimodular}
    \leanok
    Let $X$ and $Y$ be sets with $\{x_{2}, x_{1}, x_{0}\} \subseteq X$ and $\{y_{0}, y_{1}, y_{2}\} \subseteq Y$. Let $Q \in \mathbb{Q}^{X \times Y}$ be a TU matrix. Define $u \in \{0, \pm 1\}^{X}$, $v \in \{0, \pm 1\}^{Y}$, and $Q'$ as follows:
    \begin{align*}
        u(i) &= \begin{cases}
            Q (x_{2}, y_{0}) \cdot Q (x_{0}, y_{0}), & i = x_{0}, \\
            Q (x_{2}, y_{0}) \cdot Q (x_{0}, y_{0}) \cdot Q (x_{0}, y_{2}) \cdot Q (x_{1}, y_{2}), & i = x_{1}, \\
            1, & i = x_{2}, \\
            1, & i \in X \setminus \{x_{2}, x_{1}, x_{0}\},
        \end{cases} \\
        v(j) &= \begin{cases}
            Q (x_{2}, y_{0}), & j = y_{0}, \\
            Q (x_{2}, y_{1}), & j = y_{1}, \\
            Q (x_{2}, y_{0}) \cdot Q (x_{0}, y_{0}) \cdot Q (x_{0}, y_{2}), & j = y_{2}, \\
            1, & j \in Y \setminus \{y_{0}, y_{1}, y_{2}\}, \\
        \end{cases} \\
        Q' (i, j) &= Q (i, j) \cdot u(i) \cdot v(j) \quad \forall i \in X, \ \forall j \in Y.
    \end{align*}
    We call $Q'$ the canonical re-signing of $Q$.
\end{definition}

\begin{lemma}
    \label{Matrix.HasTuCanonicalSigning.toCanonicalSigning}
    \uses{Matrix.IsTuSigningOf,matrix3x3signed,Matrix.toCanonicalSigning}
    \leanok
    Let $X$ and $Y$ be sets with $\{x_{2}, x_{1}, x_{0}\} \subseteq X$ and $\{y_{0}, y_{1}, y_{2}\} \subseteq Y$. Let $Q \in \mathbb{Q}^{X \times Y}$ be a TU signing of $Q_{0} \in \mathbb{Z}_{2}^{X \times Y}$ such that $Q_{0} (\{x_{2}, x_{1}, x_{0}\}, \{y_{0}, y_{1}, y_{2}\}) = S$ (from Definition~\ref{matrix3x3signed}). Then the canonical re-signing $Q'$ of $Q$ (from Definition~\ref{Matrix.toCanonicalSigning}) is a TU signing of $Q_{0}$ and $Q' (\{x_{2}, x_{1}, x_{0}\}, \{y_{0}, y_{1}, y_{2}\}) = S'$ (from Definition~\ref{matrix3x3signed}).
\end{lemma}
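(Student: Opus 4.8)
The plan is to establish the two assertions of the lemma separately: first that $Q'$ is a TU signing of $Q_{0}$, and then that the $3 \times 3$ corner $Q'(\{x_{2}, x_{1}, x_{0}\}, \{y_{0}, y_{1}, y_{2}\})$ equals $S'$. The whole argument rests on the observation that, by Definition~\ref{Matrix.toCanonicalSigning}, $Q'$ is obtained from $Q$ by scaling each row $i$ by the factor $u(i)$ and each column $j$ by the factor $v(j)$, and that all of these factors turn out to lie in $\{\pm 1\}$ rather than merely in $\{0, \pm 1\}$.

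To see that $u$ and $v$ are $\{\pm 1\}$-valued, note first that since $Q$ is TU every entry of $Q$ lies in $\{0, \pm 1\}$, so each product defining $u(i)$ or $v(j)$ lies in $\{0, \pm 1\}$; moreover the only entries of $Q$ occurring in those products are $Q(x_{2}, y_{0})$, $Q(x_{0}, y_{0})$, $Q(x_{0}, y_{2})$, $Q(x_{1}, y_{2})$ and $Q(x_{2}, y_{1})$. Because $Q$ is a signing of $Q_{0}$ and $Q_{0}$ restricted to $\{x_{2}, x_{1}, x_{0}\} \times \{y_{0}, y_{1}, y_{2}\}$ equals $S$, each of these entries has absolute value equal to the corresponding entry of $S$, which is $1$ by Definition~\ref{matrix3x3signed}; hence each such entry is $\pm 1$, and therefore $u(i), v(j) \in \{\pm 1\}$ for all $i \in X$ and $j \in Y$ (the remaining values of $u$ and $v$ being $1$ by definition). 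Now Lemmas~\ref{Matrix.IsTotallyUnimodular.mul_rows} and~\ref{Matrix.IsTotallyUnimodular.mul_cols}, applied with these $\{0, \pm 1\}$ factors, show that $Q'$ is TU; and since $|u(i)| = |v(j)| = 1$ we get $|Q'(i, j)| = |Q(i, j)| = Q_{0}(i, j)$ for all $i, j$, so $Q'$ is a signing of $Q_{0}$. Thus $Q'$ is a TU signing of $Q_{0}$.

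For the corner, I would restrict attention to rows $\{x_{2}, x_{1}, x_{0}\}$ and columns $\{y_{0}, y_{1}, y_{2}\}$. The submatrix $Q(\{x_{2}, x_{1}, x_{0}\}, \{y_{0}, y_{1}, y_{2}\})$ is TU (a submatrix of a TU matrix is TU, straight from Definition~\ref{Matrix.IsTotallyUnimodular}) and, as noted above, its entrywise absolute value is exactly $S$; hence it is a TU signing of $S$ in the sense of Definition~\ref{Matrix.IsTuSigningOf}. Comparing Definition~\ref{Matrix.toCanonicalSigning} with the formulas in Lemma~\ref{Matrix.HasTuCanonicalSigning.toCanonicalSigning_submatrix3x3}, the restrictions of $u$ and $v$ to $\{x_{2}, x_{1}, x_{0}\}$ and $\{y_{0}, y_{1}, y_{2}\}$ coincide with the vectors $u$ and $v$ of that lemma (they depend only on the corner entries of $Q$), and the formula $Q'(i, j) = Q(i, j) \cdot u(i) \cdot v(j)$ is the same in both places. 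Therefore Lemma~\ref{Matrix.HasTuCanonicalSigning.toCanonicalSigning_submatrix3x3} applies verbatim and yields $Q'(\{x_{2}, x_{1}, x_{0}\}, \{y_{0}, y_{1}, y_{2}\}) = S'$.

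I expect the only genuine subtlety to be the step establishing that $u$ and $v$ are nowhere zero: this is precisely what guarantees that the re-signing does not turn a nonzero entry of $Q_{0}$ into a zero entry of $Q'$, and it is the one place where the specific shape of $S$ — in particular that $S(x_{2}, y_{0})$, $S(x_{0}, y_{0})$, $S(x_{0}, y_{2})$, $S(x_{1}, y_{2})$ and $S(x_{2}, y_{1})$ are all $1$ — is used. Everything else is bookkeeping: unfolding the definition of the canonical re-signing, invoking the row/column scaling lemmas, and checking that the restricted construction matches the $3 \times 3$ corner lemma.
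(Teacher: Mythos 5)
Your proof is correct and follows essentially the same route as the paper: show that the canonical re-signing multiplies rows and columns by $\pm 1$ factors (hence preserves both total unimodularity and the signing property), then reduce the claim about the $3\times 3$ corner to Lemma~\ref{Matrix.HasTuCanonicalSigning.toCanonicalSigning_submatrix3x3}. The one thing you do more carefully than the paper is explain \emph{why} $u$ and $v$ take values in $\{\pm 1\}$ rather than $\{0,\pm 1\}$ --- namely that the five entries of $Q$ appearing in their definitions lie over entries of $S$ equal to $1$ --- a point the paper's proof asserts without justification; your version is a welcome clarification.
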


\begin{proof}
    \uses{Matrix.IsTuSigningOf,Matrix.IsTotallyUnimodular.mul_rows,Matrix.IsTotallyUnimodular.mul_cols,Matrix.HasTuCanonicalSigning.toCanonicalSigning_submatrix3x3}
    \leanok
    Since $Q$ is a TU signing of $Q_{0}$ and $Q'$ is obtained from $Q$ by multiplying some rows and columns by $\pm 1$ factors, $Q'$ is also a TU signing of $Q_{0}$. Equality $Q' (\{x_{2}, x_{1}, x_{0}\}, \{y_{0}, y_{1}, y_{2}\}) = S'$ follows from Lemma~\ref{Matrix.HasTuCanonicalSigning.toCanonicalSigning_submatrix3x3}.
\end{proof}

\begin{definition}
    \label{MatrixSum3.toCanonicalSigning}
    \uses{standardReprSum3,Matrix.IsTuSigningOf,Matrix.toCanonicalSigning}
    \leanok
    Suppose that $B_{\ell}$ and $B_{r}$ from Definition~\ref{standardReprSum3} have TU signings $B_{\ell}'$ and $B_{r}'$, respectively. Let $B_{\ell}''$ and $B_{r}''$ be the canonical re-signings (from Definition~\ref{Matrix.toCanonicalSigning}) of $B_{\ell}'$ and $B_{r}'$, respectively. Let $A_{\ell}''$, $A_{r}''$, $D_{\ell}''$, $D_{r}''$, and $D_{0}''$ be blocks of $B_{\ell}''$ and $B_{r}''$ analogous to blocks $A_{\ell}$, $A_{r}$, $D_{\ell}$, $D_{r}$, and $D_{0}$ of $B_{\ell}$ and $B_{r}$. The canonical signing $B''$ of $B$ is defined as
    \begin{center}
        \noindent
        \begin{tikzpicture}
            \begin{scope}[scale=0.5, shift={(0, -4)}]
                \node[anchor=east] at (0, 4) {$B'' =$};
                \draw (0, 0) -- (8, 0) -- (8, 8) -- (0, 8) -- cycle;
                \draw (0, 4) -- (8, 4);
                \draw (4, 0) -- (4, 8);
                \draw (0, 2) -- (5, 2) -- (5, 5) -- (2, 5) -- (2, 0);
                \draw (3, 4) -- (3, 5);
                \draw (4, 3) -- (5, 3);
                \node at (2, 6) {$A_{\ell}''$};
                \node at (6, 2) {$A_{r}''$};
                \node at (6, 6) {$0$};
                \node at (1, 1) {$D_{\ell r}''$};
                \node at (1, 3) {$D_{\ell}''$};
                \node at (3, 1) {$D_{r}''$};
                \node at (3, 3) {$D_{0}''$};
                \node at (2.5, 4.5) {$1$};
                \node at (3.5, 4.5) {$1$};
                \node at (4.5, 4.5) {$0$};
                \node at (4.5, 3.5) {$1$};
                \node at (4.5, 2.5) {$1$};
            \end{scope}
            \node[anchor=west] at (4.25, 0) {where $D_{\ell r}'' = D_{r}'' \cdot (D_{0}'')^{-1} \cdot D_{\ell}''$.};
        \end{tikzpicture}
    \end{center}
    Note that $D_{0}''$ is non-singular by construction, so $D_{\ell r}''$ and hence $B''$ are well-defined.
\end{definition}

\subsection{Properties of Canonical Signing}

\begin{lemma}
    \label{MatrixSum3.HasCanonicalSigning.toCanonicalSigning}
    \uses{MatrixSum3.toCanonicalSigning}
    \leanok
    $B''$ from Definition~\ref{MatrixSum3.toCanonicalSigning} is a signing of $B$.
\end{lemma}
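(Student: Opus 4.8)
The plan is to check the defining equality $|B''(i,j)| = B(i,j)$ one block at a time, following the block partitions of $B$ and $B''$ in Definitions~\ref{standardReprSum3} and~\ref{MatrixSum3.toCanonicalSigning}.

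I would first dispose of every block that is copied from one of the two summands. Lemma~\ref{Matrix.HasTuCanonicalSigning.toCanonicalSigning}, applied to the TU signing $B_{\ell}'$ of $B_{\ell}$ (whose $\{x_{2},x_{1},x_{0}\}\times\{y_{0},y_{1},y_{2}\}$ submatrix is $S$ from Definition~\ref{matrix3x3signed} by Definition~\ref{standardReprSum3}), shows that the canonical re-signing $B_{\ell}''$ is again a signing of $B_{\ell}$ and that its $\{x_{2},x_{1},x_{0}\}\times\{y_{0},y_{1},y_{2}\}$ submatrix equals $S'$ (from Definition~\ref{matrix3x3signed}); the same reasoning applies to $B_{r}'$, $B_{r}''$, $B_{r}$. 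Consequently the blocks $A_{\ell}''$, $A_{r}''$, $D_{\ell}''$, $D_{r}''$, the common block $D_{0}''$ (which therefore equals the canonical signing $D_{0}'$ of $D_{0}$, and in particular is nonsingular, so $D_{\ell r}''$ is well-defined), together with all the fixed $0$ and $1$ entries, have exactly the absolute values prescribed by $B$. This reduces the lemma to a single claim: $D_{\ell r}'' = D_{r}''\,(D_{0}'')^{-1}\,D_{\ell}''$ is a signing of $D_{\ell r} = D_{r}\,D_{0}^{-1}\,D_{\ell}$, where the latter product is taken over $\mathbb{Z}_{2}$.

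For this I would split on the two normal forms of $D_{0}$ from Lemma~\ref{Matrix.isUnit_2x2} and write down $D_{0}^{-1}$ over $\mathbb{Z}_{2}$ and $(D_{0}'')^{-1}$ over $\mathbb{Q}$ explicitly in each case. Keeping track of the index pairing between $\{y_{0},y_{1}\}$ and $\{x_{1},x_{0}\}$, each entry $D_{\ell r}''(i,j)$ then becomes a sum $t_{1}+t_{2}$ of exactly two terms with $t_{1},t_{2}\in\{0,\pm 1\}$, while the matching entry $D_{\ell r}(i,j)$ equals $(|t_{1}|+|t_{2}|)\bmod 2$, with $|t_{k}|$ the $\mathbb{Z}_{2}$-product out of which $t_{k}$ was built. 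Since $|t_{1}+t_{2}|\equiv|t_{1}|+|t_{2}|\pmod 2$ automatically, the only way the signing identity can fail is when $|t_{1}|=|t_{2}|=1$ but $t_{1}$ and $t_{2}$ fail to cancel, so it suffices to rule this out. I would do so using total unimodularity of $B_{r}''$ on the $2\times 2$ submatrix with rows $\{x_{2},i\}$ and columns $\{y_{0},y_{1}\}$: since by $S'$ the $x_{2}$-row is $(1,1)$ there, its determinant is $D_{r}''(i,y_{1})-D_{r}''(i,y_{0})\in\{0,\pm 1\}$, which forces any two nonzero entries in that row of $D_{r}''$ to be equal; symmetrically, total unimodularity of $B_{\ell}''$ on the $2\times 2$ submatrix with rows $\{x_{1},x_{0}\}$ and columns $\{j,y_{2}\}$ forces any two nonzero entries in column $j$ of $D_{\ell}''$ to be equal. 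Feeding these equalities, together with the explicit $\pm 1$ pattern of $(D_{0}'')^{-1}$, into the two cases yields $t_{1}+t_{2}=0$ whenever $|t_{1}|=|t_{2}|=1$. (In the case $D_{0}=I$ this is precisely where the canonical signing's choice of $\operatorname{diag}(1,-1)$ rather than $I$ gets used.)

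The step I expect to be the main obstacle is this last cancellation argument for $D_{\ell r}''$, and in particular the bookkeeping in the unitriangular case for $D_{0}$: there $(D_{0}'')^{-1}$ is not diagonal, so each entry of $D_{\ell r}''$ already mixes the two rows of $D_{\ell}''$ before it is cancelled against a row of $D_{r}''$, and one has to observe that one of the a-priori-dangerous sub-cases (the one that would require an entry of $D_{\ell}''$ to be simultaneously present and absent) is vacuous. Everything outside this block is a direct consequence of Lemma~\ref{Matrix.HasTuCanonicalSigning.toCanonicalSigning} and the blockwise definitions.
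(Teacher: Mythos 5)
Your proof takes the same route as the paper's: invoke Lemma~\ref{Matrix.HasTuCanonicalSigning.toCanonicalSigning} to handle the $A_{\ell}''$, $A_{r}''$, $D_{\ell}''$, $D_{r}''$, $D_{0}''$ blocks and the fixed $0/1$ entries, then reduce to verifying that $D_{\ell r}''$ is a signing of $D_{\ell r}$. The paper dismisses that last step as ``a direct calculation,'' and your cancellation argument (two-term decomposition, sign-coherence from the $2\times 2$ TU submatrices against the $(1,1)$ row of $S'$ and the $(1,1)^{\mathsf T}$ column, and the observation that the $\operatorname{diag}(1,-1)$ choice in $D_{0}''$ is what makes the dangerous case cancel) correctly fills in exactly that gap.
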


\begin{proof}
    \uses{Matrix.HasTuCanonicalSigning.toCanonicalSigning,Matrix.IsTuSigningOf}
    \leanok
    By Lemma~\ref{Matrix.HasTuCanonicalSigning.toCanonicalSigning}, $B_{\ell}''$ and $B_{r}''$ are TU signings of $B_{\ell}$ and $B_{r}$, respectively. As a result, blocks $A_{\ell}''$, $A_{r}''$, $D_{\ell}''$, $D_{r}''$, and $D_{0}''$ in $B''$ are signings of the corresponding blocks in $B$. Thus, it remains to show that $D_{\ell r}''$ is a signing of $D_{\ell r}$. This can be verified via a direct calculation.
\end{proof}

\begin{lemma}
    \label{MatrixSum3.HasTuBr.cccAr_isTotallyUnimodular}
    \uses{standardReprSum3,Matrix.IsTuSigningOf,Matrix.toCanonicalSigning}
    \leanok
    Suppose that $B_{r}$ from Definition~\ref{standardReprSum3} has a TU signing $B_{r}'$. Let $B_{r}''$ be the canonical re-signing (from Definition~\ref{Matrix.toCanonicalSigning}) of $B_{r}'$. Let $c_{0}'' = B_{r}'' (X_{r}, y_{0})$, $c_{1}'' = B_{r}'' (X_{r}, y_{1})$, and $c_{2}'' = c_{0}'' - c_{1}''$. Then the following statements hold.
    \begin{enumerate}
        \item\label{item:tss_Brp_c01} For every $i \in X_{r}$, $\begin{bmatrix} c_{0}'' (i) & c_{1}'' (i) \end{bmatrix} \in \{0, \pm 1\}^{\{y_{0}, y_{1}\}} \setminus \left\{ \begin{bmatrix} 1 & -1 \end{bmatrix}, \begin{bmatrix} -1 & 1 \end{bmatrix} \right\}$.
        \item\label{item:tss_Brp_c2} For every $i \in X_{r}$, $c_{2}'' (i) \in \{0, \pm 1\}$.
        \item\label{item:tss_Brp_tu1} $\begin{bmatrix} c_{0}'' & c_{2}'' & A_{r}'' \end{bmatrix}$ is TU.
        \item\label{item:tss_Brp_tu2} $\begin{bmatrix} c_{1}'' & c_{2}'' & A_{r}'' \end{bmatrix}$ is TU.
        \item\label{item:tss_Brp_tu3} $\begin{bmatrix} c_{0}'' & c_{1}'' & c_{2}'' & A_{r}'' \end{bmatrix}$ is TU.
    \end{enumerate}
\end{lemma}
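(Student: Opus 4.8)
The plan is to read everything off from Lemma~\ref{Matrix.HasTuCanonicalSigning.toCanonicalSigning}, which tells us that $B_r''$ is a TU signing of $B_r$ whose submatrix on $\{x_2, x_1, x_0\} \times \{y_0, y_1, y_2\}$ is $S'$ from Definition~\ref{matrix3x3signed}; in particular $B_r'' (x_2, y_0) = B_r'' (x_2, y_1) = 1$, and, since the row $x_2$ of $B_r$ is zero off of $\{y_0, y_1\}$, so is the row $x_2$ of $B_r''$. Write $A_r''$ for the block of $B_r''$ analogous to $A_r$; its columns lie in $Y_r \setminus \{y_0, y_1\}$ and its row $x_2$ is zero. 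Items~(1) and~(2) are then immediate: $B_r''$ is TU, so all entries lie in $\{0, \pm 1\}$; for $i \in X_r \setminus \{x_2\}$ the $2 \times 2$ submatrix of $B_r''$ on rows $\{x_2, i\}$ and columns $\{y_0, y_1\}$ has determinant $c_1'' (i) - c_0'' (i) \in \{0, \pm 1\}$, which forbids the patterns $\begin{bmatrix} 1 & -1 \end{bmatrix}$ and $\begin{bmatrix} -1 & 1 \end{bmatrix}$ (and for $i = x_2$ the pattern is $\begin{bmatrix} 1 & 1 \end{bmatrix}$); finally, running through the seven admissible patterns gives $c_2'' (i) = c_0'' (i) - c_1'' (i) \in \{0, \pm 1\}$.

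The core is the auxiliary claim that $\begin{bmatrix} c_2'' & A_r'' \end{bmatrix}$ is TU, which I would obtain by pivoting. Since $B_r'' (x_2, y_0) = 1 \ne 0$, the short tableau pivot in $B_r''$ on $(x_2, y_0)$ is defined; let $\tilde B$ be the result, which is TU by Lemma~\ref{Matrix.IsTotallyUnimodular.shortTableauPivot}. Plugging into the formula of Lemma~\ref{Matrix.shortTableauPivot_eq}, and using that the row $x_2$ of $B_r''$ vanishes off $\{y_0, y_1\}$, one reads off that for every $i \ne x_2$ one has $\tilde B (i, y_1) = c_1'' (i) - c_0'' (i) = -c_2'' (i)$, while every entry $\tilde B (i, j)$ with $i \ne x_2$ and $j$ a column of $A_r$ is unchanged, hence equals $A_r'' (i, j)$. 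Consequently, if $T$ is a square submatrix of $\begin{bmatrix} c_2'' & A_r'' \end{bmatrix}$ that avoids the $c_2''$-column, it is a submatrix of $A_r''$, a column-submatrix of the TU matrix $B_r''$; if $T$ uses the $c_2''$-column and contains row $x_2$, it has a zero row (because $c_2'' (x_2) = 1 - 1 = 0$ and $A_r'' (x_2, \cdot) = 0$), so $\det T = 0$; and if $T$ uses the $c_2''$-column but not row $x_2$, then after negating that one column it becomes a square submatrix of $\tilde B$. In every case $\det T \in \{0, \pm 1\}$, proving the claim.

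Items~(3), (4), and~(5) then follow by the same bookkeeping, using in addition that $\begin{bmatrix} c_0'' & c_1'' & A_r'' \end{bmatrix}$ and all of its column-subsets are column-submatrices of $B_r''$, hence TU. Let $T$ be a square submatrix of the relevant target matrix. If $T$ does not involve the $c_2''$-column, it is a submatrix of $\begin{bmatrix} c_0'' & c_1'' & A_r'' \end{bmatrix}$, so $\det T \in \{0, \pm 1\}$. If $T$ involves the $c_2''$-column together with the $c_0''$- or the $c_1''$-column, then, since $c_2'' = c_0'' - c_1''$, adding the latter column to or subtracting it from the $c_2''$-column (a determinant-preserving column operation) turns the $c_2''$-column into $\pm c_1''$ or $\pm c_0''$; the outcome is, up to sign, either a square submatrix of $B_r''$ or a matrix with two parallel columns, and either way $\det T \in \{0, \pm 1\}$. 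And if $T$ involves the $c_2''$-column but neither the $c_0''$- nor the $c_1''$-column, then $T$ is a submatrix of $\begin{bmatrix} c_2'' & A_r'' \end{bmatrix}$, which is TU by the auxiliary claim. Hence each of the five matrices is TU.

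I expect the main obstacle to be the bookkeeping for items~(3)--(5): one has to track precisely which of $c_0'', c_1'', c_2''$ occur in a given submatrix and verify that the column operation reabsorbing $c_2''$ either lands inside $B_r''$ or produces a provably singular (two-parallel-column) matrix, which in Lean is a finicky case split; a secondary point is checking, from the exact support of the row $x_2$ of $B_r''$, that the pivot on $(x_2, y_0)$ leaves the $A_r''$ block untouched away from that row.
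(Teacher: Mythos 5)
Your proof is correct, and the central idea — pivot on an entry of row $x_2$ to bring the column $c_2'' = c_0'' - c_1''$ under control, then case-split on which of $c_0'', c_1'', c_2''$ a given square submatrix uses — matches the paper's. The organizational route differs, though. The paper proves items (3) and (4) \emph{directly} by two separate short tableau pivots, on $(x_2, y_0)$ and on $(x_2, y_1)$ respectively: after deleting row $x_2$ and re-signing columns $y_0, y_1$, the pivoted matrix \emph{is} $\begin{bmatrix} c_0'' & c_2'' & A_r'' \end{bmatrix}$ (resp.\ $\begin{bmatrix} c_1'' & c_2'' & A_r'' \end{bmatrix}$), so TU-ness is inherited with no further case work; item (5) is then settled by casing on which of the three columns appear, falling back on items (3), (4), or linear dependence. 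You instead make a single pivot, extract the weaker auxiliary fact that $\begin{bmatrix} c_2'' & A_r'' \end{bmatrix}$ is TU, and then handle (3)--(5) uniformly by a case split plus determinant-preserving column operations (adding or subtracting $c_0''$ or $c_1''$ to re-absorb $c_2''$, and spotting parallel columns when all three are present). Both routes are sound. The paper's version makes (3) and (4) immediate from the pivot identity, which avoids the column-operation reasoning you flag as the expected pain point in Lean; your version has the advantage of a single pivot and a uniform treatment of the three TU claims, at the price of more explicit submatrix/column bookkeeping. One small point to tidy up if you formalize your route: the statement's $c_0'', c_1'', c_2''$ are indexed over all of $X_r$ (including $x_2$), while the block $A_r''$ is naturally indexed over $X_r \setminus \{x_2\}$; you pad $A_r''$ with a zero row at $x_2$, which is fine, but you must then be consistent so that the ``submatrix of $B_r''$'' and ``submatrix of $\tilde B$'' identifications used in your argument literally hold.
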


\begin{proof}
    \uses{Matrix.HasTuCanonicalSigning.toCanonicalSigning,Matrix.IsTotallyUnimodular,Matrix.shortTableauPivot,Matrix.IsTotallyUnimodular.shortTableauPivot,Matrix.IsTotallyUnimodular.mul_rows,Matrix.IsTotallyUnimodular.mul_cols}
    \leanok
    Throughout the proof we use that $B_{r}''$ is TU, which holds by Lemma~\ref{Matrix.HasTuCanonicalSigning.toCanonicalSigning}.

    \begin{enumerate}
        \item Since $B_{r}''$ is TU, all its entries are in $\{0, \pm 1\}$, and in particular $\begin{bmatrix} c_{0}'' (i) & c_{1}'' (i) \end{bmatrix} \in \{0, \pm 1\}^{\{y_{0}, y_{1}\}}$. If $\begin{bmatrix} c_{0}' (i) & c_{1}'' (i) \end{bmatrix} = \begin{bmatrix} 1 & -1 \end{bmatrix}$, then
        \[
            \det B_{r}'' (\{x_{2}, i\}, \{y_{0}, y_{1}\}) = \det \begin{bmatrix} 1 & 1 \\ 1 & -1 \end{bmatrix} = -2 \notin \{0, \pm 1\},
        \]
        which contradicts TUness of $B_{r}''$. Similarly, if $\begin{bmatrix} c_{0}'' (i) & c_{1}'' (i) \end{bmatrix} = \begin{bmatrix} -1 & 1 \end{bmatrix} $, then
        \[
            \det B_{r}'' (\{x_{2}, i\}, \{y_{0}, y_{1}\}) = \det \begin{bmatrix} 1 & 1 \\ -1 & 1 \end{bmatrix} = 2 \notin \{0, \pm 1\},
        \]
        which contradicts TUness of $B_{r}''$. Thus, the desired statement holds.

        \item Follows from item~\ref{item:tss_Brp_c01} and a direct calculation.

        \item Performing a short tableau pivot in $B_{r}''$ on $(x_{2}, y_{0})$ yields:
        \[
            B_{r}'' = \begin{bmatrix}
                \fbox{1} & 1 & 0 \\
                c_{0} & c_{1} & A_{r}
            \end{bmatrix}
            \quad \to \quad
            \begin{bmatrix}
                1 & 1 & 0 \\
                -c_{0} & c_{1}'' - c_{0} & A_{r}
            \end{bmatrix}
        \]
        The resulting matrix can be transformed into $\begin{bmatrix} c_{0}'' & c_{2}'' & A_{r}'' \end{bmatrix}$ by removing row $x_{2}$ and multiplying columns $y_{0}$ and $y_{1}$ by $-1$. Since $B_{r}''$ is TU and since TUness is preserved under pivoting, taking submatrices, multiplying columns by ${\pm 1}$ factors, we conclude that $\begin{bmatrix} c_{0}'' & c_{2}'' & A_{r}'' \end{bmatrix}$ is TU.

        \item Similar to item~\ref{item:tss_Brp_tu2}, performing a short tableau pivot in $B_{r}''$ on $(x_{2}, y_{1})$ yields:
        \[
            B_{r}'' = \begin{bmatrix}
                1 & \fbox{1} & 0 \\
                c_{0} & c_{1} & A_{r}
            \end{bmatrix}
            \quad \to \quad
            \begin{bmatrix}
                1 & 1 & 0 \\
                c_{0}'' - c_{1} & -c_{1} & A_{r}
            \end{bmatrix}
        \]
        The resulting matrix can be transformed into $\begin{bmatrix} c_{1}'' & c_{2}'' & A_{r}'' \end{bmatrix}$ by removing row $x_{2}$, multiplying column $y_{1}$ by $-1$, and swapping the order of columns $y_{0}$ and $y_{1}$. Since $B_{r}''$ is TU and since TUness is preserved under pivoting, taking submatrices, multiplying columns by ${\pm 1}$ factors, and re-ordering columns, we conclude that $\begin{bmatrix} c_{1}'' & c_{2}'' & A_{r}'' \end{bmatrix}$ is TU.

        \item Let $V$ be a square submatrix of $\begin{bmatrix} c_{0}'' & c_{1}'' & c_{2}'' & A_{r}'' \end{bmatrix}$. Our goal is to show that $\det V \in \{0, \pm 1\}$.

        Suppose that column $c_{2}''$ is not in $V$. Then $V$ is a submatrix of $B_{r}''$, which is TU. Thus, $\det V \in \{0, \pm 1\}$. Going forward we assume that column $z$ is in $V$.

        Suppose that columns $c_{0}''$ and $c_{1}''$ are both in $V$. Then $V$ contains columns $c_{0}''$, $c_{1}''$, and $c_{2}'' = c_{0}'' - c_{1}''$, which are linearly. Thus, $\det V = 0$. Going forward we assume that at least one of the columns $c_{0}''$ and $c_{1}''$ is not in $V$.

        Suppose that column $c_{1}''$ is not in $V$. Then $V$ is a submatrix of $\begin{bmatrix} c_{0}'' & c_{2}'' & A_{r}'' \end{bmatrix}$, which is TU by item~\ref{item:tss_Brp_tu1}. Thus, $\det V \in \{0, \pm 1\}$. Similarly, if column $c_{0}''$ is not in $V$, then $V$ is a submatrix of $\begin{bmatrix} c_{1}'' & c_{2}'' & A_{r}'' \end{bmatrix}$, which is TU by item~\ref{item:tss_Brp_tu2}. Thus, $\det V \in \{0, \pm 1\}$.

    \end{enumerate}
\end{proof}

\begin{lemma}
    \label{MatrixSum3.HasTuBr.dddAl_isTotallyUnimodular}
    \uses{standardReprSum3,Matrix.IsTuSigningOf,Matrix.toCanonicalSigning}
    \leanok
    Suppose that $B_{\ell}$ from Definition~\ref{standardReprSum3} has a TU signing $B_{\ell}'$. Let $B_{\ell}''$ be the canonical re-signing (from Definition~\ref{Matrix.toCanonicalSigning}) of $B_{\ell}'$. Let $d_{0}'' = B_{\ell}'' (x_{0}, Y_{\ell})$, $d_{1}'' = B_{\ell}'' (x_{1}, Y_{\ell})$, and $d_{2}'' = d_{0}'' - d_{1}''$. Then the following statements hold.
    \begin{enumerate}
        \item\label{item:tss_Blp_d01} For every $j \in Y_{\ell}$, $\begin{bmatrix} d_{0}'' (i) \\ d_{1}'' (j) \end{bmatrix} \in \{0, \pm 1\}^{\{x_{1}, x_{0}\}} \setminus \left\{ \begin{bmatrix} 1 \\ -1 \end{bmatrix}, \begin{bmatrix} -1 \\ 1 \end{bmatrix} \right\}$.
        \item\label{item:tss_Blp_d2} For every $j \in Y_{\ell}$, $d_{2}'' (j) \in \{0, \pm 1\}$.
        \item\label{item:tss_Blp_tu1} $\begin{bmatrix} A_{\ell}'' \\ d_{0}'' \\ d_{2}'' \end{bmatrix}$ is TU.
        \item\label{item:tss_Blp_tu2} $\begin{bmatrix} A_{\ell}'' \\ d_{1}'' \\ d_{2}'' \end{bmatrix}$ is TU.
        \item\label{item:tss_Blp_tu3} $\begin{bmatrix} A_{\ell}'' \\ d_{0}'' \\ d_{1}'' \\ d_{2}'' \end{bmatrix}$ is TU.
    \end{enumerate}
\end{lemma}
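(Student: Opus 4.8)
\emph{Approach.} This statement is the row--column mirror of Lemma~\ref{MatrixSum3.HasTuBr.cccAr_isTotallyUnimodular}: here the rows $d_0'',d_1''$ of $B_\ell''$ and the combination $d_2'' = d_0'' - d_1''$ play exactly the role that the columns $c_0'',c_1''$ of $B_r''$ and $c_2'' = c_0'' - c_1''$ played there, so I would mirror that proof. Two facts about $B_\ell''$ drive it: (a) by Lemma~\ref{Matrix.HasTuCanonicalSigning.toCanonicalSigning}, $B_\ell''$ is TU and its restriction to $\{x_2,x_1,x_0\}\times\{y_0,y_1,y_2\}$ equals $S'$ from Definition~\ref{matrix3x3signed}, so in particular $B_\ell''(x_0,y_2) = B_\ell''(x_1,y_2) = 1$; and (b) by the form of $B_\ell$ in Definition~\ref{standardReprSum3}, the column $y_2$ of $B_\ell''$ has support exactly $\{x_0,x_1\}$, that is $B_\ell''(x_2,y_2) = 0$ and $B_\ell''(i,y_2) = 0$ for all $i\in X_\ell\setminus\{x_2,x_1,x_0\}$.

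\emph{Items~(\ref{item:tss_Blp_d01}) and~(\ref{item:tss_Blp_d2}).} Since $B_\ell''$ is TU, all its entries lie in $\{0,\pm1\}$, hence each pair $\begin{bmatrix} d_0''(j) \\ d_1''(j) \end{bmatrix}$ lies in $\{0,\pm1\}^{\{x_1,x_0\}}$. For $j\in Y_\ell$, the $2\times2$ submatrix $B_\ell''(\{x_0,x_1\},\{j,y_2\}) = \begin{bmatrix} d_0''(j) & 1 \\ d_1''(j) & 1 \end{bmatrix}$ has determinant $d_0''(j) - d_1''(j)$; if the pair were $\begin{bmatrix} 1 \\ -1 \end{bmatrix}$ or $\begin{bmatrix} -1 \\ 1 \end{bmatrix}$ this determinant would be $\pm2$, contradicting TUness of $B_\ell''$. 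This establishes~(\ref{item:tss_Blp_d01}); item~(\ref{item:tss_Blp_d2}) then follows from it by the same elementary enumeration of admissible pairs as in Lemma~\ref{MatrixSum3.HasTuBr.cccAr_isTotallyUnimodular}.

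\emph{Items~(\ref{item:tss_Blp_tu1})--(\ref{item:tss_Blp_tu3}).} Because column $y_2$ of $B_\ell''$ has support $\{x_0,x_1\}$ with $B_\ell''(x_0,y_2)=1$, a short tableau pivot in $B_\ell''$ on $(x_0,y_2)$ leaves every row outside $\{x_0,x_1\}$ unchanged, carries row $x_1$ (away from the pivot column) to $d_1''-d_0'' = -d_2''$, and carries row $x_0$ (away from the pivot column) to $d_0''$. Deleting the pivot column and negating the image of row $x_1$ then produces $\begin{bmatrix} A_\ell'' \\ d_0'' \\ d_2'' \end{bmatrix}$ up to a reordering of rows, so item~(\ref{item:tss_Blp_tu1}) follows from TUness of $B_\ell''$ together with Lemma~\ref{Matrix.IsTotallyUnimodular.shortTableauPivot}, Lemma~\ref{Matrix.IsTotallyUnimodular.mul_rows}, and closure of TUness under submatrices and reindexing. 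Pivoting instead on $(x_1,y_2)$ gives item~(\ref{item:tss_Blp_tu2}) in the same way. For item~(\ref{item:tss_Blp_tu3}), let $V$ be a square submatrix of $\begin{bmatrix} A_\ell'' \\ d_0'' \\ d_1'' \\ d_2'' \end{bmatrix}$: if $V$ omits the row $d_2''$ it is a submatrix of $B_\ell''$; if $V$ contains all of $d_0'',d_1'',d_2''$ then $\det V = 0$ since those rows are linearly dependent; otherwise $V$ is a submatrix of the matrix from item~(\ref{item:tss_Blp_tu1}) or item~(\ref{item:tss_Blp_tu2}). In each case $\det V\in\{0,\pm1\}$.

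\emph{Main obstacle.} Conceptually nothing goes beyond Lemma~\ref{MatrixSum3.HasTuBr.cccAr_isTotallyUnimodular}; the real work is the index bookkeeping in items~(\ref{item:tss_Blp_tu1})--(\ref{item:tss_Blp_tu3}): picking the pivot entry, identifying which rows and columns of $B_\ell''$ form the block $A_\ell''$ (and checking that the untouched row $x_2$ belongs to it), and tracking the sign flip and reindexing after the pivot. A tempting alternative -- deducing the lemma from Lemma~\ref{MatrixSum3.HasTuBr.cccAr_isTotallyUnimodular} applied to a transpose of $B_\ell$ with the labels $x_i$ and $y_i$ interchanged -- is best avoided, because the $3\times3$ block $S'$ is not symmetric under that interchange when $D_0$ is diagonal, so the reduction would need an additional diagonal $\pm1$ correction before it lands on a matrix of the form covered by that lemma.
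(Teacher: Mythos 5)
Your proof is correct and is essentially the ``repeat the same arguments up to transposition'' option that the paper's one-line proof gestures at, worked out in detail. Each step checks out: items (1)--(2) follow from TUness of $B_\ell''$ together with the $2\times 2$ minor $B_\ell''(\{x_1,x_0\},\{j,y_2\})$ having determinant $\pm(d_0''(j)-d_1''(j))$; for items (3)--(4) the short tableau pivot on $(x_0,y_2)$ (resp.\ $(x_1,y_2)$) leaves rows outside $\{x_0,x_1\}$ unchanged because column $y_2$ of $B_\ell''$ is supported on $\{x_0,x_1\}$, and the pivot-row/off-pivot-row formulas deliver $d_0''$ and $-d_2''$ (resp.\ $d_1''$ and $d_2''$), so a submatrix, a sign flip, and a row permutation produce the stated matrices; item (5) is the same three-way case split on whether $d_2''$, or all of $d_0'',d_1'',d_2''$, appear that the paper uses in the $c$-version. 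Your caveat about the naive route -- applying Lemma~\ref{MatrixSum3.HasTuBr.cccAr_isTotallyUnimodular} verbatim to $B_\ell^{\top}$ -- is well founded: the canonical re-signing of Definition~\ref{Matrix.toCanonicalSigning} does not commute with transposition (concretely, for the diagonal $D_0$, $S'^{\top}$ after the $x_i\leftrightarrow y_i$ relabeling differs from $S'$ by a diagonal sign change), so one would have to track an extra $\pm1$ row/column scaling when translating the conclusions back; repeating the argument directly, as you do, avoids that bookkeeping. One small point worth recording explicitly: in item~(1) the $2\times2$-minor argument is vacuous for $j=y_2$, but that case is immediate since the pair is then $(1,1)$ from $S'$; likewise $j\in\{y_0,y_1\}$ can be read off from $S'$ directly.
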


\begin{proof}
    \uses{MatrixSum3.HasTuBr.cccAr_isTotallyUnimodular}
    \leanok
    Apply Lemma~\ref{MatrixSum3.HasTuBr.cccAr_isTotallyUnimodular} to $B_{\ell}^{\top}$, or repeat the same arguments up to transposition.
\end{proof}

\begin{lemma}
    \label{MatrixSum3.IsCanonicalSigning.Al_D_isTotallyUnimodular}
    \uses{MatrixSum3.toCanonicalSigning,Matrix.IsTotallyUnimodular}
    \leanok
    Let $B''$ be from Definition~\ref{MatrixSum3.toCanonicalSigning}. Let $c_{0}'' = B'' (X_{r}, y_{0})$, $c_{1}'' = B'' (X_{r}, y_{1})$, and $c_{2}'' = c_{0}'' - c_{1}''$. Similarly, let $d_{0}'' = B'' (x_{0}, Y_{\ell})$, $d_{1}'' = B'' (x_{1}, Y_{\ell})$, and $d_{2}'' = d_{0}'' - d_{1}''$. Then the following statements hold.
    \begin{enumerate}
        \item\label{item:tss_Bp_c2} For every $i \in X_{r}$, $c_{2}'' (i) \in \{0, \pm 1\}$.
        \item\label{item:tss_Bp_Deq} If $D_{0}'' = \begin{bmatrix} 1 & 0 \\ 0 & -1 \end{bmatrix}$, then $D'' = c_{0}'' \otimes d_{0}'' - c_{1}'' \otimes d_{1}''$. If $D_{0}'' = \begin{bmatrix} 1 & 1 \\ 0 & 1 \end{bmatrix}$, then $D'' = c_{0}'' \otimes d_{0}'' - c_{0}'' \otimes d_{1}'' + c_{1}'' \otimes d_{1}''$.
        \item\label{item:tss_Bp_Dcols} For every $j \in Y_{\ell}$, $D'' (X_{r}, j) \in \{0, \pm c_{0}'', \pm c_{1}'', \pm c_{2}''\}$.
        \item\label{item:tss_Bp_Drows} For every $i \in X_{r}$, $D'' (i, Y_{\ell}) \in \{0, \pm d_{0}'', \pm d_{1}'', \pm d_{2}''\}$.
        \item\label{item:tss_Bp_AlD} $\begin{bmatrix} A_{\ell}'' \\ D'' \end{bmatrix}$ is TU.
    \end{enumerate}
\end{lemma}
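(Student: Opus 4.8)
I would prove the five items in the order stated; each one reduces to one of the two preceding structural lemmas (Lemma~\ref{MatrixSum3.HasTuBr.cccAr_isTotallyUnimodular} and Lemma~\ref{MatrixSum3.HasTuBr.dddAl_isTotallyUnimodular}) together with a finite calculation. For item~\ref{item:tss_Bp_c2}, note that by the construction of the canonical signing $B''$ (Definition~\ref{MatrixSum3.toCanonicalSigning}), the columns $y_0$ and $y_1$ of $B''$ restricted to the rows of $B$ coming from $B_r$ agree with the vectors $c_0''$, $c_1''$ built from $B_r''$ in Lemma~\ref{MatrixSum3.HasTuBr.cccAr_isTotallyUnimodular}; hence $c_2'' = c_0'' - c_1''$ is the same vector there, and item~\ref{item:tss_Brp_c2} of that lemma gives $c_2''(i) \in \{0, \pm 1\}$.

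Item~\ref{item:tss_Bp_Deq} is a direct matrix computation and the technical heart of the lemma. By the standing normal-form assumption (Lemma~\ref{Matrix.isUnit_2x2}) and Lemma~\ref{Matrix.HasTuCanonicalSigning.toCanonicalSigning}, the block $D_0''$ of $B''$ equals the canonical signing $D_0'$ of $D_0$, so it is either $\begin{bmatrix} 1 & 0 \\ 0 & -1 \end{bmatrix}$ or $\begin{bmatrix} 1 & 1 \\ 0 & 1 \end{bmatrix}$, with inverse $\begin{bmatrix} 1 & 0 \\ 0 & -1 \end{bmatrix}$ or $\begin{bmatrix} 1 & -1 \\ 0 & 1 \end{bmatrix}$ respectively. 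Since $D_{\ell r}'' = D_r'' \cdot (D_0'')^{-1} \cdot D_\ell''$, the two columns of $D_r''$ are the restrictions of $c_0'', c_1''$, and the two rows of $D_\ell''$ are the restrictions of $d_0'', d_1''$, substituting the explicit inverse and multiplying the three matrices out gives precisely the asserted combination of outer products on the $D_{\ell r}''$ part of $D''$. Any remaining entries of $D''$ — those lying in $D_\ell''$, $D_r''$, $D_0''$, or in the fixed $0/\pm 1$ pattern inherited from $S'$ — are then checked against the same formula entrywise, using the known values of $S'$ and $D_0''$ and the definitions of the $c_i''$ and $d_i''$.

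Items~\ref{item:tss_Bp_Dcols} and~\ref{item:tss_Bp_Drows} follow from item~\ref{item:tss_Bp_Deq} by a finite case split. Fixing a column $j$, item~\ref{item:tss_Bp_Deq} expresses $D''(\cdot, j)$ as $d_0''(j)\, c_0'' - d_1''(j)\, c_1''$ in the first case and $d_0''(j)\, c_0'' - d_1''(j)\,(c_0'' - c_1'')$ in the second; by item~\ref{item:tss_Blp_d01} of Lemma~\ref{MatrixSum3.HasTuBr.dddAl_isTotallyUnimodular} the pair $(d_0''(j), d_1''(j))$ ranges only over $\{0, \pm 1\}^{2} \setminus \{(1,-1),(-1,1)\}$, and running through those seven values shows the column is $0$, $\pm c_0''$, $\pm c_1''$, or $\pm c_2''$. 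Item~\ref{item:tss_Bp_Drows} is the transpose of this argument, using item~\ref{item:tss_Bp_Deq} again together with item~\ref{item:tss_Brp_c01} of Lemma~\ref{MatrixSum3.HasTuBr.cccAr_isTotallyUnimodular}.

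For item~\ref{item:tss_Bp_AlD}, item~\ref{item:tss_Blp_tu3} of Lemma~\ref{MatrixSum3.HasTuBr.dddAl_isTotallyUnimodular} gives that $\begin{bmatrix} A_\ell'' \\ d_0'' \\ d_1'' \\ d_2'' \end{bmatrix}$ is TU, and by item~\ref{item:tss_Bp_Drows} every row of $D''$ is $0$, $\pm d_0''$, $\pm d_1''$, or $\pm d_2''$; hence $\begin{bmatrix} A_\ell'' \\ D'' \end{bmatrix}$ is obtained from that TU matrix by selecting rows (a possibly non-injective submatrix, harmless by Definition~\ref{Matrix.IsTotallyUnimodular} and Lemma~\ref{Matrix.isTotallyUnimodular_iff_forall_isPartiallyUnimodular}) and then rescaling rows by $\{0, \pm 1\}$ factors (harmless by Lemma~\ref{Matrix.IsTotallyUnimodular.mul_rows}), so it is TU. The one genuine obstacle is item~\ref{item:tss_Bp_Deq}: it needs no new idea, but one must carefully track the orderings of the $2 \times 2$ blocks, the exact shape of $(D_0'')^{-1}$ in each case, and the handful of entries of $D''$ not covered by the $D_{\ell r}''$ product — getting every sign right there is the delicate part, and it is exactly what makes items~\ref{item:tss_Bp_Dcols}--\ref{item:tss_Bp_AlD} come out cleanly.
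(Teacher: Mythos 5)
Your proof is correct and matches the paper's approach essentially step for step: item 1 via the earlier lemma on $c_2''$, item 2 by multiplying out $\begin{bmatrix} c_0'' & c_1'' \end{bmatrix}(D_0'')^{-1}\begin{bmatrix} d_0'' \\ d_1'' \end{bmatrix}$ in the two cases, items 3 and 4 by a finite case split over the admissible pairs $(d_0''(j), d_1''(j))$ and $(c_0''(i), c_1''(i))$, and item 5 by stacking $A_\ell''$ over signed copies of $d_0''$, $d_1''$, $d_2''$ and using that TUness is preserved under taking (possibly repeated) rows and $\pm 1$ row scalings. The only cosmetic difference is that the paper derives the item-2 formula for all of $D''$ at once via the block identity $D'' = \begin{bmatrix} D_0'' \\ D_r'' \end{bmatrix}(D_0'')^{-1}\begin{bmatrix} D_\ell'' & D_0'' \end{bmatrix}$, whereas you handle $D_{\ell r}''$ via the product and then verify the remaining blocks entrywise against the same formula; both routes are valid and give the same result.
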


\begin{proof}
    \uses{MatrixSum3.HasTuBr.cccAr_isTotallyUnimodular,MatrixSum3.HasTuBr.dddAl_isTotallyUnimodular,Matrix.IsTotallyUnimodular}
    \leanok
    \begin{enumerate}
        \item Holds by Lemma~\ref{MatrixSum3.HasTuBr.cccAr_isTotallyUnimodular}.\ref{item:tss_Brp_c2}.

        \item Note that
        \[
            \begin{bmatrix} D_{\ell}'' \\ D_{\ell r}'' \end{bmatrix} = \begin{bmatrix} D_{0}'' \\ D_{r}'' \end{bmatrix} \cdot (D_{0}'')^{-1} \cdot D_{\ell}'', \quad
            \begin{bmatrix} D_{0}'' \\ D_{r}'' \end{bmatrix} = \begin{bmatrix} D_{0}'' \\ D_{r}'' \end{bmatrix} \cdot (D_{0}'')^{-1} \cdot D_{0}'', \quad
            \begin{bmatrix} D_{0}'' \\ D_{r}'' \end{bmatrix} = \begin{bmatrix} c_{0}'' & c_{1}'' \end{bmatrix}, \quad
            \begin{bmatrix} D_{\ell}'' & D_{0}'' \end{bmatrix} = \begin{bmatrix} d_{0}'' \\ d_{1}'' \end{bmatrix}.
        \]
        Thus,
        \[
            D''
            = \begin{bmatrix} D_{\ell}'' & D_{0}'' \\ D_{\ell r}'' & D_{r}'' \end{bmatrix}
            = \begin{bmatrix} D_{0}'' \\ D_{r}'' \end{bmatrix} \cdot (D_{0}'')^{-1} \cdot \begin{bmatrix} D_{\ell}'' & D_{0}'' \end{bmatrix}
            = \begin{bmatrix} c_{0}'' & c_{1}'' \end{bmatrix} \cdot (D_{0}'')^{-1} \cdot \begin{bmatrix} d_{0}'' \\ d_{1}'' \end{bmatrix}.
        \]
        Considering the two cases for $D_{0}''$ and performing the calculations yields the desired results.

        \item Let $j \in Y_{\ell}$. By Lemma~\ref{MatrixSum3.HasTuBr.dddAl_isTotallyUnimodular}.\ref{item:tss_Blp_d01}, $\begin{bmatrix} d_{0}'' (i) \\ d_{1}'' (j) \end{bmatrix} \in \{0, \pm 1\}^{\{x_{1}, x_{0}\}} \setminus \left\{ \begin{bmatrix} 1 \\ -1 \end{bmatrix}, \begin{bmatrix} -1 \\ 1 \end{bmatrix} \right\}$. Consider two cases.
        \begin{enumerate}
            \item If $D_{0}'' = \begin{bmatrix} 1 & 0 \\ 0 & -1 \end{bmatrix}$, then by item~\ref{item:tss_Bp_Deq} we have $D'' (X_{r}, j) = d_{0}'' (j) \cdot c_{0}''  + (-d_{1}'' (j)) \cdot c_{1}''$. By considering all possible cases for $d_{0}'' (j)$ and $d_{1}'' (j)$, we conclude that $D'' (X_{r}, j) \in \{0, \pm c_{0}'', \pm c_{1}'', \pm (c_{0}'' - c_{1}'')\}$.
            \item If $D_{0}'' = \begin{bmatrix} 1 & 1 \\ 0 & 1 \end{bmatrix}$, then by item~\ref{item:tss_Bp_Deq} we have $D'' (X_{r}, j) = (d_{0}'' (j) - d_{1}'' (j)) \cdot c_{0}''  + d_{1}'' (j) \cdot c_{1}''$. By considering all possible cases for $d_{0}'' (j)$ and $d_{1}'' (j)$, we conclude that $D'' (X_{r}, j) \in \{0, \pm c_{0}'', \pm c_{1}'', \pm (c_{0}'' - c_{1}'')\}$.
        \end{enumerate}

        \item Let $i \in X_{r}$. By Lemma~\ref{MatrixSum3.HasTuBr.cccAr_isTotallyUnimodular}.\ref{item:tss_Brp_c01}, $\begin{bmatrix} c_{0}'' (i) & c_{1}'' (i) \end{bmatrix} \in \{0, \pm 1\}^{\{y_{0}, y_{1}\}} \setminus \left\{ \begin{bmatrix} 1 & -1 \end{bmatrix}, \begin{bmatrix} -1 & 1 \end{bmatrix} \right\}$. Consider two cases.
        \begin{enumerate}
            \item If $D_{0}'' = \begin{bmatrix} 1 & 0 \\ 0 & -1 \end{bmatrix}$, then by item~\ref{item:tss_Bp_Deq} we have $D'' (i, Y_{\ell}) = c_{0}'' (i) \cdot d_{0}''  + (-c_{1}'' (i)) \cdot d_{1}''$. By considering all possible cases for $c_{0}'' (i)$ and $c_{1}'' (i)$, we conclude that $D'' (i, Y_{\ell}) \in \{0, \pm d_{0}'', \pm d_{1}'', \pm d_{2}''\}$.
            \item If $D_{0}'' = \begin{bmatrix} 1 & 1 \\ 0 & 1 \end{bmatrix}$, then by item~\ref{item:tss_Bp_Deq} we have $D'' (i, Y_{\ell}) = c_{0}'' (i) \cdot d_{0}'' + (c_{1}'' (i) - c_{0}'' (i)) \cdot d_{1}''$. By considering all possible cases for $c_{0}'' (i)$ and $c_{1}'' (i)$, we conclude that $D'' (i, Y_{\ell}) \in \{0, \pm d_{0}'', \pm d_{1}'', \pm d_{2}''\}$.
        \end{enumerate}


        \item By Lemma~\ref{MatrixSum3.HasTuBr.dddAl_isTotallyUnimodular}.\ref{item:tss_Blp_tu3}, $\begin{bmatrix} A_{\ell}'' \\ d_{0}'' \\ d_{1}'' \\ d_{2}'' \end{bmatrix}$ is TU. Since TUness is preserved under adjoining zero rows, copies of existing rows, and multiplying rows by $\pm 1$ factors, $\begin{bmatrix} A_{\ell}'' \\ 0 \\ \pm d_{0}'' \\ \pm d_{1}'' \\ \pm d_{2}'' \end{bmatrix}$ is also TU. By item~\ref{item:tss_Bp_Drows}, $\begin{bmatrix} A_{\ell}'' \\ D'' \end{bmatrix}$ is a submatrix of the latter matrix, hence it is also TU.
    \end{enumerate}
\end{proof}

\subsection{Proof of Regularity}

\begin{definition}
    \label{MatrixLikeSum3}
    \uses{Matrix.IsTotallyUnimodular}
    \leanok
    Let $X_{\ell}'$, $Y_{\ell}'$, $X_{r}'$, $Y_{r}'$ be sets and let $x_{0}$ and $x_{1}$ be distinct elements contained neither in $X_{\ell}'$ nor $X_{r}'$. Additionally, let $c_{0}, c_{1} \in \mathbb{Q}^{X_{r}' \cup \{x_{1}, x_{0}\}}$ be column vectors. We define $\mathcal{C} (X_{\ell}', Y_{\ell}', X_{r}', Y_{r}'; c_{0}, c_{1})$ to be the family of matrices of the form $\begin{bmatrix} A_{\ell} & 0 \\ D & A_{r} \end{bmatrix}$ such that:
    \begin{enumerate}
        \item $A_{\ell} \in \mathbb{Q}^{X_{\ell}' \times Y_{\ell}'}$, $A_{r} \in \mathbb{Q}^{(X_{r}' \cup \{x_{1}, x_{0}\}) \times Y_{r}'}$, and $D \in \mathbb{Q}^{(X_{r}' \cup \{x_{1}, x_{0}\}) \times Y_{\ell}'}$;
        \item\label{item:mls3_LeftTU} $\begin{bmatrix} A_{\ell} \\ D \end{bmatrix}$ is TU;
        \item\label{item:mls3_Parallels} for every $j \in Y_{\ell}'$, $D (X_{r}', j) \in \{0, \pm c_{0}, \pm c_{1}, \pm (c_{0} - c_{1})\}$;
        \item\label{item:mls3_BottomTU} $\begin{bmatrix} c_{0} & c_{1} & c_{0} - c_{1} & A_{r} \end{bmatrix}$ is TU;
        \item\label{item:mls3_AuxTU} $\begin{bmatrix} A_{\ell} & 0 \\ D (x_{0}, Y_{\ell}') & 1 \\ D (x_{1}, Y_{\ell}') & 1 \end{bmatrix}$ is TU;
        \item\label{item:mls3_Col0} $c_{0} (x_{0}) = 1$ and $c_{0} (x_{1}) = 0$;
        \item\label{item:mls3_Col1} either $c_{1} (x_{0}) = 0$ and $c_{1} (x_{1}) = -1$, or $c_{1} (x_{0}) = 1$ and $c_{1} (x_{1}) = 1$.
    \end{enumerate}
\end{definition}

\begin{lemma}
    \label{MatrixSum3.IsCanonicalSigning.toMatrixLikeSum3}
    \uses{MatrixSum3.toCanonicalSigning,MatrixLikeSum3}
    \leanok
    Let $B''$ be from Definition~\ref{MatrixSum3.toCanonicalSigning}. Then $B'' \in \mathcal{C} (X_{\ell}', Y_{\ell}', X_{r}', Y_{r}'; c_{0}'', c_{1}'')$ with $X_{\ell}' = X_{\ell} \setminus \{x_{1}, x_{0}\}$, $X_{r}' = X_{r} \setminus \{x_{2}, x_{1}, x_{0}\}$, $Y_{\ell}' = Y_{\ell} \setminus \{y_{2}\}$, $Y_{r}' = Y_{r} \setminus \{y_{0}, y_{1}\}$, $x_{0}$ and $x_{1}$ are the same, $c_{0}'' = B'' (X_{r}', y_{0})$, and $c_{1}'' = B'' (X_{r}', y_{1})$.
\end{lemma}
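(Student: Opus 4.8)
The plan is to unwind Definition~\ref{MatrixLikeSum3} and verify its seven defining clauses for $B''$ one at a time; the mathematical substance is already packaged in the earlier lemmas of this subsection, so the remaining work is almost entirely reindexing bookkeeping. The first step is to fix the dictionary between the natural layout of $B''$ from Definition~\ref{MatrixSum3.toCanonicalSigning} and the generic layout $\begin{bmatrix} A_\ell & 0 \\ D & A_r \end{bmatrix}$ of Definition~\ref{MatrixLikeSum3}: under the stated identifications the rows $x_0, x_1$ and the column $y_2$ of $B''$ get absorbed into the right-hand blocks, so that $A_\ell$ is the restriction of $B_\ell''$ to $X_\ell' \times Y_\ell'$ (with $x_2 \in X_\ell'$ and $y_0, y_1 \in Y_\ell'$), $A_r$ is the restriction of $B_r''$ to $(X_r' \cup \{x_1, x_0\}) \times Y_r'$ (with $y_2 \in Y_r'$), and $D$ is the restriction of the block $D''$ of Definition~\ref{MatrixSum3.toCanonicalSigning} to $(X_r' \cup \{x_1, x_0\}) \times Y_\ell'$. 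In particular $c_0''$ and $c_1''$ are the columns of $B''$ indexed by $y_0$ and $y_1$, restricted to the rows $X_r' \cup \{x_1, x_0\}$.

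With this dictionary the easy clauses go as follows. The index-set requirements in the first clause of Definition~\ref{MatrixLikeSum3} are immediate from the disjointness hypotheses of Definition~\ref{standardReprSum3} together with the displayed identifications. For clauses~\ref{item:mls3_Col0} and~\ref{item:mls3_Col1}, the entries of $c_0''$ and $c_1''$ in rows $x_0, x_1$ are precisely the entries of the block $D_0''$ of $B''$, which by Definition~\ref{matrix3x3signed} is in canonical form, i.e. equals $\begin{bmatrix} 1 & 0 \\ 0 & -1 \end{bmatrix}$ or $\begin{bmatrix} 1 & 1 \\ 0 & 1 \end{bmatrix}$ up to the fixed reindexing of Lemma~\ref{Matrix.isUnit_2x2}; reading off the columns indexed by $y_0$ and $y_1$ yields exactly the two value patterns permitted by~\ref{item:mls3_Col0} and~\ref{item:mls3_Col1}. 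Clause~\ref{item:mls3_LeftTU} follows from Lemma~\ref{MatrixSum3.IsCanonicalSigning.Al_D_isTotallyUnimodular}.\ref{item:tss_Bp_AlD}: the $\mathcal{C}$-layout matrix $\begin{bmatrix} A_\ell \\ D \end{bmatrix}$ is obtained from $\begin{bmatrix} A_\ell'' \\ D'' \end{bmatrix}$ by deleting the column $y_2$, and TUness passes to submatrices. Clause~\ref{item:mls3_Parallels} is, after restricting to the rows $X_r'$ and recalling $c_0'' - c_1'' = c_2''$, exactly Lemma~\ref{MatrixSum3.IsCanonicalSigning.Al_D_isTotallyUnimodular}.\ref{item:tss_Bp_Dcols}. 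Clause~\ref{item:mls3_BottomTU} follows from Lemma~\ref{MatrixSum3.HasTuBr.cccAr_isTotallyUnimodular}.\ref{item:tss_Brp_tu3} applied to the canonical re-signing $B_r''$ of the TU signing $B_r'$ of $B_r$, after restricting the rows from $X_r$ to $X_r' \cup \{x_1, x_0\}$.

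The only clause that calls for an extra remark is~\ref{item:mls3_AuxTU}, and I expect this, together with the ambient index juggling, to be the only real friction in the formalization. The matrix $\begin{bmatrix} A_\ell & 0 \\ D(x_0, Y_\ell') & 1 \\ D(x_1, Y_\ell') & 1 \end{bmatrix}$ is, up to a column permutation, $B_\ell''$ itself: its rows are $X_\ell' \cup \{x_0, x_1\} = X_\ell$, and the adjoined column, which is zero on $X_\ell'$ and equal to $1$ on each of $x_0$ and $x_1$, is exactly the column of $B_\ell''$ indexed by $y_2$ (whose entries on $\{x_2, x_1, x_0\}$ are the $y_2$-column of $S'$, namely $0, 1, 1$, and whose entries on $X_\ell \setminus \{x_2, x_1, x_0\}$ vanish). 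Hence clause~\ref{item:mls3_AuxTU} amounts to the assertion that $B_\ell''$ is TU, which holds by Lemma~\ref{Matrix.HasTuCanonicalSigning.toCanonicalSigning} since $B_\ell''$ is the canonical re-signing of the TU signing $B_\ell'$ of $B_\ell$; alternatively one could read it off Lemma~\ref{MatrixSum3.HasTuBr.dddAl_isTotallyUnimodular}.\ref{item:tss_Blp_tu3} after adjoining the row $x_2$. Once all seven clauses are verified, $B'' \in \mathcal{C}(X_\ell', Y_\ell', X_r', Y_r'; c_0'', c_1'')$, as claimed.
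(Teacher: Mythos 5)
Your proposal is correct and follows essentially the same strategy as the paper: unwind Definition~\ref{MatrixLikeSum3} and check each clause against the lemmas of the preceding subsection, citing Lemma~\ref{MatrixSum3.IsCanonicalSigning.Al_D_isTotallyUnimodular}.\ref{item:tss_Bp_AlD} for clause~\ref{item:mls3_LeftTU}, Lemma~\ref{MatrixSum3.IsCanonicalSigning.Al_D_isTotallyUnimodular}.\ref{item:tss_Bp_Dcols} for clause~\ref{item:mls3_Parallels}, and Lemma~\ref{MatrixSum3.HasTuBr.cccAr_isTotallyUnimodular}.\ref{item:tss_Brp_tu3} for clause~\ref{item:mls3_BottomTU}. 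You are in fact more thorough than the paper's own terse proof, which leaves clauses~\ref{item:mls3_AuxTU}, \ref{item:mls3_Col0}, and~\ref{item:mls3_Col1} implicit; your observations that the auxiliary matrix of clause~\ref{item:mls3_AuxTU} is a column reordering of $B_\ell''$, and that the $\{x_0,x_1\}$-entries of $c_0'', c_1''$ are read off the canonical $D_0''$, correctly fill those in. One minor slip: for clause~\ref{item:mls3_LeftTU} you describe $\begin{bmatrix} A_\ell \\ D \end{bmatrix}$ as obtained from $\begin{bmatrix} A_\ell'' \\ D'' \end{bmatrix}$ by deleting the $y_2$ column, but under the stated identifications the two matrices already share the index sets $(X_\ell' \cup X_r' \cup \{x_1,x_0\}) \times Y_\ell'$, so no deletion occurs; the TU conclusion carries over directly and the argument is unaffected.
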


\begin{proof}
    \uses{MatrixSum3.IsCanonicalSigning.Al_D_isTotallyUnimodular,MatrixLikeSum3}
    \leanok
    Recall that $c_{0}'' - c_{1}'' \in \{0, \pm 1\}^{X_{r}'}$ by Lemma~\ref{MatrixSum3.IsCanonicalSigning.Al_D_isTotallyUnimodular}.\ref{item:tss_Bp_c2}, so $\mathcal{C} (X_{\ell}', Y_{\ell}', X_{r}', Y_{r}'; c_{0}'', c_{1}'')$ is well-defined. To see that $B'' \in \mathcal{C} (X_{\ell}', Y_{\ell}', X_{r}', Y_{r}'; c_{0}'', c_{1}'')$, note that all properties from Definition~\ref{MatrixLikeSum3} are satisfied: property~\ref{item:mls3_Parallels} holds by Lemma~\ref{MatrixSum3.IsCanonicalSigning.Al_D_isTotallyUnimodular}.\ref{item:tss_Bp_Dcols}, property~\ref{item:mls3_BottomTU} holds by Lemma~\ref{MatrixSum3.HasTuBr.cccAr_isTotallyUnimodular}.\ref{item:tss_Brp_tu3}, and property~\ref{item:mls3_LeftTU} holds by Lemma~\ref{MatrixSum3.IsCanonicalSigning.Al_D_isTotallyUnimodular}.\ref{item:tss_Bp_AlD}.
\end{proof}

\begin{lemma}
    \label{MatrixLikeSum3.shortTableauPivot}
    \uses{MatrixLikeSum3,Matrix.shortTableauPivot}
    \leanok
    Let $C \in \mathcal{C} (X_{\ell}', Y_{\ell}', X_{r}', Y_{r}'; c_{0}, c_{1})$ from Definition~\ref{MatrixLikeSum3}. Let $x \in X_{\ell}'$ and $y \in Y_{\ell}'$ be such that $A_{\ell} (x, y) \neq 0$, and let $C'$ be the result of performing a short tableau pivot in $C$ on $(x, y)$. Then $C' \in \mathcal{C} (X_{\ell}', Y_{\ell}', X_{r}', Y_{r}'; c_{0}, c_{1})$.
\end{lemma}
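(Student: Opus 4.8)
The plan is to push the pivot through the block structure of $C$ and then reverify the seven conditions of Definition~\ref{MatrixLikeSum3}. Since the pivot row $x$ lies in $X_\ell'$ and the top-right block of $C = \begin{bmatrix} A_\ell & 0 \\ D & A_r \end{bmatrix}$ vanishes, Lemma~\ref{Matrix.shortTableauPivot_zero} applies with the distinguished zero block being the $X_\ell' \times Y_r'$ corner: it gives that $C'$ has the same block shape $\begin{bmatrix} A_\ell' & 0 \\ D' & A_r \end{bmatrix}$, that $A_r$ is unchanged, and that $\begin{bmatrix} A_\ell' \\ D' \end{bmatrix}$ is precisely the short tableau pivot of $\begin{bmatrix} A_\ell \\ D \end{bmatrix}$ on $(x,y)$. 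Condition~1 (the index sets) is then immediate since the short tableau pivot preserves them, and condition~\ref{item:mls3_LeftTU} follows from Lemma~\ref{Matrix.IsTotallyUnimodular.shortTableauPivot} applied to the TU matrix $\begin{bmatrix} A_\ell \\ D \end{bmatrix}$.

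Next I would dispatch the conditions that do not really see the pivot. Conditions~\ref{item:mls3_BottomTU}, \ref{item:mls3_Col0} and~\ref{item:mls3_Col1} mention only $A_r$, $c_0$, $c_1$, which are untouched, so they carry over from $C$ to $C'$ verbatim. For condition~\ref{item:mls3_AuxTU}, I would perform the short tableau pivot on $(x,y)$ in the TU matrix $\begin{bmatrix} A_\ell & 0 \\ D(x_0, Y_\ell') & 1 \\ D(x_1, Y_\ell') & 1 \end{bmatrix}$: its appended unit column has a zero entry in the pivot row $x$, so by the pivot formula of Lemma~\ref{Matrix.shortTableauPivot_eq} (or another application of Lemma~\ref{Matrix.shortTableauPivot_zero}) that column is left unchanged while the remaining part transforms exactly as in $\begin{bmatrix} A_\ell' \\ D' \end{bmatrix}$; the result is $\begin{bmatrix} A_\ell' & 0 \\ D'(x_0, Y_\ell') & 1 \\ D'(x_1, Y_\ell') & 1 \end{bmatrix}$, which is TU by Lemma~\ref{Matrix.IsTotallyUnimodular.shortTableauPivot}.

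The substantive point is condition~\ref{item:mls3_Parallels}: for every $j \in Y_\ell'$ the column of $D'$ at $j$ must again be one of $0, \pm c_0, \pm c_1, \pm(c_0 - c_1)$ (vectors on the common index set of $c_0$ and $c_1$). Set $\alpha = A_\ell(x,y)$ and $\lambda = A_\ell(x,j)/\alpha$. By Lemma~\ref{Matrix.shortTableauPivot_eq}, the column of $D'$ at $y$ equals $-\alpha^{-1} D(\,\cdot\,, y)$, and for $j \neq y$ the column of $D'$ at $j$ equals $D(\,\cdot\,, j) - \lambda\, D(\,\cdot\,, y)$, where $D(\,\cdot\,, j)$ denotes the column of $D$ at $j$. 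Because $\begin{bmatrix} A_\ell \\ D \end{bmatrix}$ is TU we have $\alpha \in \{\pm 1\}$ and $\lambda \in \{0, \pm 1\}$, and because $C$ satisfies condition~\ref{item:mls3_Parallels} both $D(\,\cdot\,, y)$ and $D(\,\cdot\,, j)$ already lie in $\{0, \pm c_0, \pm c_1, \pm(c_0 - c_1)\}$, which is closed under negation. So the column at $y$ is fine, and for $j \neq y$ we may assume $\lambda = \pm 1$ with both $D(\,\cdot\,, y), D(\,\cdot\,, j)$ nonzero (otherwise the new column is $\pm D(\,\cdot\,, j)$ or $\pm D(\,\cdot\,, y)$). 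The new column is then $\varepsilon_p\, g_p - \lambda\,\varepsilon_q\, g_q$ for some $g_p, g_q \in \{c_0, c_1, c_0 - c_1\}$ and $\varepsilon_p, \varepsilon_q \in \{\pm 1\}$.

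I expect this last enumeration to be the main obstacle. If $g_p = g_q$ the value is $0$ or $\pm 2 g_p$, and the latter is impossible because $\begin{bmatrix} A_\ell' \\ D' \end{bmatrix}$ is TU (all entries in $\{0, \pm 1\}$) while $g_p \neq 0$; hence it is $0$. If $\{g_p, g_q\} = \{c_0, c_1\}$ the value is $\pm(c_0 - c_1)$ (in the set) or $\pm(c_0 + c_1)$; and if $c_0 - c_1 \in \{g_p, g_q\}$ the value is one of $\pm c_0, \pm c_1$ (in the set) or $\pm(2c_0 - c_1)$ or $\pm(2c_1 - c_0)$. Each of the bad vectors $c_0 + c_1$, $2c_0 - c_1$, $2c_1 - c_0$ is excluded by inspecting the rows $x_0$ and $x_1$: conditions~\ref{item:mls3_Col0} and~\ref{item:mls3_Col1} force $c_0(x_0) = 1$, $c_0(x_1) = 0$ and $(c_1(x_0), c_1(x_1)) \in \{(0,-1),(1,1)\}$, so each bad vector either has a coordinate equal to $\pm 2$ — contradicting TU-ness of $\begin{bmatrix} A_\ell' \\ D' \end{bmatrix}$ — or takes opposite nonzero values at $x_0$ and $x_1$, making the $2 \times 2$ minor of $\begin{bmatrix} A_\ell' & 0 \\ D'(x_0, Y_\ell') & 1 \\ D'(x_1, Y_\ell') & 1 \end{bmatrix}$ on rows $x_0, x_1$ and columns $j$ and the unit column equal to $\pm 2$, contradicting condition~\ref{item:mls3_AuxTU} for $C'$. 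Only the good values survive, so condition~\ref{item:mls3_Parallels} holds and the proof is complete. This final case split is small but fiddly, and in the formalization it is naturally discharged by normalizing the few possible shapes of the two columns and then deciding.
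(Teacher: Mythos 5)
Your proof follows essentially the same route as the paper's: apply Lemma~\ref{Matrix.shortTableauPivot_zero} to preserve the block shape (so $A_r$, $c_0$, $c_1$ are untouched and $\begin{bmatrix} A_\ell' \\ D' \end{bmatrix}$ is the pivoted left block), deduce properties~\ref{item:mls3_LeftTU} and~\ref{item:mls3_BottomTU} from TU preservation under pivoting, and then handle property~\ref{item:mls3_Parallels} via the pivot formula and a finite case analysis in which the excluded column shapes are ruled out by contradictions with TUness of $\begin{bmatrix} A_\ell' \\ D' \end{bmatrix}$ or property~\ref{item:mls3_AuxTU}. Your write-up is in fact slightly more complete than the paper's prose proof: you explicitly establish property~\ref{item:mls3_AuxTU} for $C'$ (by the same zero-column/pivot argument), a fact the paper's proof invokes in its case analysis but does not spell out, and you organize the exhaustive case split cleanly around the signed pairs $\varepsilon_p g_p - \lambda \varepsilon_q g_q$ with $g_p, g_q \in \{c_0, c_1, c_0 - c_1\}$.
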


\begin{proof}
    \uses{Matrix.shortTableauPivot_zero,Matrix.IsTotallyUnimodular,Matrix.IsTotallyUnimodular.shortTableauPivot}
    \leanok
    Our goal is to show that $C'$ satisfies all properties from Definition~\ref{MatrixLikeSum3}. Let $C' = \begin{bmatrix} C_{11}' & C_{12}' \\ C_{21}' & C_{22}' \end{bmatrix}$, and let $\begin{bmatrix} A_{\ell}' \\ D' \end{bmatrix}$ be the result of performing a short tableau pivot on $(x, y)$ in $\begin{bmatrix} A_{\ell} \\ D \end{bmatrix}$. Observe the following.

    \begin{itemize}
        \item By Lemma~\ref{Matrix.shortTableauPivot_zero}, $C_{11}' = A_{\ell}'$, $C_{12}' = 0$, $C_{21}' = D'$, and $C_{22}' = A_{r}$.
        \item Since $\begin{bmatrix} A_{\ell} \\ D \end{bmatrix}$ is TU by property~\ref{item:mls3_LeftTU} for $C$, all entries of $A_{\ell}$ are in $\{0, \pm 1\}$.
        \item $A_{\ell} (x, y) \in \{\pm 1\}$, as $A_{\ell} (x, y) \in \{0, \pm 1\}$ by the above observation and $A_{\ell} (x, y) \neq 0$ by the assumption.
        \item Since $\begin{bmatrix} A_{\ell} \\ D \end{bmatrix}$ is TU by property~\ref{item:mls3_LeftTU} for $C$, and since pivoting preserves TUness, $\begin{bmatrix} A_{\ell}' \\ D' \end{bmatrix}$ is also TU.
    \end{itemize}

    These observations immediately imply properties~\ref{item:mls3_BottomTU} and~\ref{item:mls3_LeftTU} for $C'$. Indeed, property~\ref{item:mls3_BottomTU} holds for $C'$, since $C_{22}' = A_{r}$ and $\begin{bmatrix} c_{0} & c_{1} & c_{0} - c_{1} & A_{r} \end{bmatrix}$ is TU by property~\ref{item:mls3_BottomTU} for $C$. On the other hand, property~\ref{item:mls3_LeftTU} follows from $C_{11}' = A_{\ell}'$, $C_{21}' = D'$, and $\begin{bmatrix} A_{\ell}' \\ D' \end{bmatrix}$ being TU. Thus, it only remains to show that $C'$ satisfies property~\ref{item:mls3_Parallels}. Let $j \in Y_{r}$. Our goal is to prove that $D' (X_{r}', j) \in \{0, \pm c_{0}, \pm c_{1}, \pm (c_{0} - c_{1})\}$.

    Suppose $j = y$. By the pivot formula, $D' (X_{r}', y) = -\frac{D (X_{r}', y)}{A_{\ell} (x, y)}$. Since $D (X_{r}', y) \in \{0, \pm c_{0}, \pm c_{1}, \pm (c_{0} - c_{1})\}$ by property~\ref{item:mls3_Parallels} for $C$ and since $A_{\ell} (x, y) \in \{\pm 1\}$, we get $D' (X_{r}', y) \in \{0, \pm c_{0}, \pm c_{1}, \pm (c_{0} - c_{1})\}$.

    Now suppose $j \in Y_{\ell} \setminus \{y\}$. By the pivot formula, $D' (X_{r}', j) = D (X_{r}', j) - \frac{A_{\ell} (x, j)}{A_{\ell} (x, y)} \cdot D (X_{r}', y)$. Here $D (X_{r}', j), \ D (X_{r}', y) \in \{0, \pm c_{0}, \pm c_{1}, \pm (c_{0} - c_{1})\}$ by property~\ref{item:mls3_Parallels} for $C$, and $A_{\ell} (x, j) \in \{0, \pm 1\}$ and $A_{\ell} (x, y) \in \{\pm 1\}$ by the prior observations. Perform an exhaustive case distinction on $D (X_{r}', j)$, $D (X_{r}', y)$, $A_{\ell} (x, j)$, and $A_{\ell} (x, y)$. The number of cases can be significantly reduced by using symmetries. In every remaining case, we can either show that $D' (X_{r}', j) \in \{0, \pm c_{0}, \pm c_{1}, \pm (c_{0} - c_{1})\}$, as desired, or obtain a contradiction with property~\ref{item:mls3_AuxTU}.
\end{proof}

\begin{lemma}
    \label{MatrixLikeSum3.isTotallyUnimodular}
    \uses{MatrixLikeSum3,Matrix.IsTotallyUnimodular}
    \leanok
    Let $C \in \mathcal{C} (X_{\ell}', Y_{\ell}', X_{r}', Y_{r}'; c_{0}, c_{1})$ from Definition~\ref{MatrixLikeSum3}. Then $C$ is TU.
\end{lemma}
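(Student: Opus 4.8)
The plan is to mirror the proof of Lemma~\ref{standardReprSum2_isTotallyUnimodular}. By Lemma~\ref{Matrix.isTotallyUnimodular_iff_forall_isPartiallyUnimodular} it suffices to show that every matrix in $\mathcal{C}(X_\ell', Y_\ell', X_r', Y_r'; c_0, c_1)$ is $k$-PU for every $k \in \mathbb{N}$, and I would prove this by induction on $k$, keeping the statement quantified over \emph{all} choices of $X_\ell', Y_\ell', X_r', Y_r', c_0, c_1$ so that the induction hypothesis applies to the matrices produced by pivoting. The base case $k = 1$ amounts to checking that every entry of $C = \begin{bmatrix} A_\ell & 0 \\ D & A_r \end{bmatrix}$ lies in $\{0, \pm 1\}$: the entries of $A_\ell$ and $D$ because $\begin{bmatrix} A_\ell \\ D \end{bmatrix}$ is TU by property~\ref{item:mls3_LeftTU}, the entries of $A_r$ because $\begin{bmatrix} c_0 & c_1 & c_0 - c_1 & A_r \end{bmatrix}$ is TU by property~\ref{item:mls3_BottomTU}, and the remaining block is zero.

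For the inductive step, fix a $(k+1) \times (k+1)$ submatrix $T$ of $C$ and distinguish two cases. If $T$ uses no row of $X_\ell'$, then $T$ is a submatrix of $\begin{bmatrix} D & A_r \end{bmatrix}$, and I would conclude from a separate claim that $\begin{bmatrix} D & A_r \end{bmatrix}$ is TU. That claim is proved by the same column-operation argument as Lemma~\ref{Matrix.IsTotallyUnimodular.fromCols_outer}: by property~\ref{item:mls3_Parallels} every column of $D$ is either zero or a $\pm 1$ multiple of $c_0$, $c_1$, or $c_0 - c_1$, so $\begin{bmatrix} D & A_r \end{bmatrix}$ arises from the TU matrix $\begin{bmatrix} c_0 & c_1 & c_0 - c_1 & A_r \end{bmatrix}$ of property~\ref{item:mls3_BottomTU} by adjoining zero columns, duplicating columns, and negating columns, each of which preserves total unimodularity (Lemma~\ref{Matrix.IsTotallyUnimodular.mul_cols}); hence $\det T \in \{0, \pm 1\}$.

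If instead $T$ uses some row $x \in X_\ell'$, then since the top-right block of $C$ is zero, the $x$-row of $T$ vanishes on every column of $T$ drawn from $Y_r'$. Thus, if $T(x, y) = 0$ for every column $y$ of $T$ drawn from $Y_\ell'$, then $T$ has a zero row and $\det T = 0$. Otherwise pick such a $y$ with $T(x, y) \neq 0$; since all entries of $C$ are in $\{0, \pm 1\}$ (base case), $T(x, y) \in \{\pm 1\}$, so by Lemma~\ref{shortTableauPivot_submatrix_det_abs_eq_div} the short tableau pivot in $T$ on $(x, y)$ produces a $k \times k$ submatrix $T''$ with $|\det T''| = |\det T|$. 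Since $T$ contains the pivot row and column, $T''$ is a submatrix of the matrix $C'$ obtained by performing the short tableau pivot in $C$ on $(x, y)$, which is legitimate because $A_\ell(x, y) = T(x, y) \neq 0$ (cf.\ Lemma~\ref{Matrix.IsTotallyUnimodular.shortTableauPivot}). By Lemma~\ref{MatrixLikeSum3.shortTableauPivot} we have $C' \in \mathcal{C}(X_\ell', Y_\ell', X_r', Y_r'; c_0, c_1)$, so the induction hypothesis gives $\det T'' \in \{0, \pm 1\}$, whence $\det T \in \{0, \pm 1\}$.

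The only step that is not a direct transcription of the $2$-sum proof is the auxiliary claim that $\begin{bmatrix} D & A_r \end{bmatrix}$ is TU: the column-operation reduction works literally on the rows of $X_r'$, where property~\ref{item:mls3_Parallels} applies, but the two rows $x_0$ and $x_1$ require knowing that the $x_0$- and $x_1$-entries of each column of $D$ are compatible with the same signed copy of $c_0$, $c_1$, or $c_0 - c_1$. I expect to extract this from properties~\ref{item:mls3_Col0}, \ref{item:mls3_Col1}, and~\ref{item:mls3_AuxTU} by a short, symmetry-reduced case distinction of the kind already carried out in the proof of Lemma~\ref{MatrixLikeSum3.shortTableauPivot}; this is the part I anticipate being the most delicate. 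With this lemma available, regularity of the $3$-sum follows the pattern of Theorems~\ref{Matroid.IsSum1of.isRegular} and~\ref{Matroid.IsSum2of.isRegular}: sign $B_\ell$ and $B_r$, pass to canonical signings, apply Lemma~\ref{MatrixSum3.IsCanonicalSigning.toMatrixLikeSum3} and this lemma, and finish with Lemma~\ref{StandardRepr.toMatroid_isRegular_iff_hasTuSigning}.
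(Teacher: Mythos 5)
Your proof matches the paper's step by step: reduce to $k$-partial unimodularity via Lemma~\ref{Matrix.isTotallyUnimodular_iff_forall_isPartiallyUnimodular}, induct on $k$ keeping the statement quantified over all of $\mathcal{C}$, split on whether the chosen $(k+1)\times(k+1)$ submatrix meets $X_\ell'$, pivot using Lemma~\ref{shortTableauPivot_submatrix_det_abs_eq_div}, invoke Lemma~\ref{MatrixLikeSum3.shortTableauPivot} to stay inside $\mathcal{C}$, and close by the induction hypothesis.

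The one place you go beyond the paper is the worry about the rows $x_0$, $x_1$ when showing that $\begin{bmatrix} D & A_r \end{bmatrix}$ is TU. This stems from reading property~\ref{item:mls3_Parallels} as constraining only $D$'s restriction to $X_r'$. But $c_0, c_1$ are vectors on $X_r' \cup \{x_1, x_0\}$, so the membership $D(X_r', j) \in \{0, \pm c_0, \pm c_1, \pm(c_0 - c_1)\}$ only type-checks if the displayed index set is understood to include $x_0, x_1$; and the item of Lemma~\ref{MatrixSum3.IsCanonicalSigning.Al_D_isTotallyUnimodular} that later discharges this property is stated for the full column. With that (intended) reading, your column-operation reduction applies to all rows of $\begin{bmatrix} D & A_r \end{bmatrix}$ simultaneously from properties~\ref{item:mls3_Parallels} and~\ref{item:mls3_BottomTU}, and the extra case analysis you sketch from properties~\ref{item:mls3_Col0}, \ref{item:mls3_Col1}, \ref{item:mls3_AuxTU} is unnecessary. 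It is worth noting that under the restricted reading those three properties would not actually suffice either: they force the pair $\bigl(D(x_0,j), D(x_1,j)\bigr)$ to avoid $\pm(1,-1)$, but do not tie it to the same signed copy of $c_0$, $c_1$, or $c_0 - c_1$ that $D(X_r', j)$ matches, so the lemma would be false as stated. Apart from this point, your argument is correct and is essentially the paper's.
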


\begin{proof}
    \uses{MatrixLikeSum3,Matrix.isTotallyUnimodular_iff_forall_isPartiallyUnimodular,shortTableauPivot_submatrix_det_abs_eq_div,MatrixLikeSum3.shortTableauPivot}
    \leanok
    By Lemma~\ref{Matrix.isTotallyUnimodular_iff_forall_isPartiallyUnimodular}, it suffices to show that $C$ is $k$-PU for every $k \in \mathbb{N}$. We prove this claim by induction on $k$. The base case with $k = 1$ holds, since properties~\ref{item:mls3_BottomTU} and~\ref{item:mls3_LeftTU} in Definition~\ref{MatrixLikeSum3} imply that $A_{\ell}$, $A_{r}$, and $D$ are TU, so all their entries of $C = \begin{bmatrix} A_{\ell} & 0 \\ D & A_{r} \end{bmatrix}$ are in $\{0, \pm 1\}$, as desired.

    Suppose that for some $k \in \mathbb{N}$ we know that every $C' \in \mathcal{C} (X_{\ell}', Y_{\ell}', X_{r}', Y_{r}'; c_{0}, c_{1})$ is $k$-PU. Our goal is to show that $C$ is $(k + 1)$-PU, i.e., that every $(k + 1) \times (k + 1)$ submatrix $S$ of $C$ has $\det V \in \{0, \pm 1\}$.

    First, suppose that $V$ has no rows in $X_{\ell}'$. Then $V$ is a submatrix of $\begin{bmatrix} D & A_{r} \end{bmatrix}$, which is TU by property~\ref{item:mls3_BottomTU} in Definition~\ref{MatrixLikeSum3}, so $\det V \in \{0, \pm 1\}$. Thus, we may assume that $S$ contains a row $x_{\ell} \in X_{\ell}'$.

    Next, note that without loss of generality we may assume that there exists $y_{\ell} \in Y_{\ell}'$ such that $V (x_{\ell}, y_{\ell}) \neq 0$. Indeed, if $V (x_{\ell}, y) = 0$ for all $y$, then $\det V = 0$ and we are done, and $V (x_{\ell}, y) = 0$ holds whenever $y \in Y_{r}'$.

    Since $C$ is $1$-PU, all entries of $V$ are in $\{0, \pm 1\}$, and hence $V (x_{\ell}, y_{\ell}) \in \{\pm 1\}$. Thus, by Lemma~\ref{shortTableauPivot_submatrix_det_abs_eq_div}, performing a short tableau pivot in $V$ on $(x_{\ell}, y_{\ell})$ yields a matrix that contains a $k \times k$ submatrix $S''$ such that $|\det V| = |\det V''|$. Since $V$ is a submatrix of $C$, matrix $V''$ is a submatrix of the matrix $C'$ resulting from performing a short tableau pivot in $C$ on the same entry $(x_{\ell}, y_{\ell})$. By Lemma~\ref{MatrixLikeSum3.shortTableauPivot}, we have $C' \in \mathcal{C} (X_{\ell}', Y_{\ell}', X_{r}', Y_{r}'; c_{0}, c_{1})$. Thus, by the inductive hypothesis applied to $V''$ and $C'$, we have $\det V'' \in \{0, \pm 1\}$. Since $|\det V| = |\det V''|$, we conclude that $\det V \in \{0, \pm 1\}$.
\end{proof}

\begin{lemma}
    \label{MatrixSum3.IsCanonicalSigning.isTotallyUnimodular}
    \uses{MatrixSum3.toCanonicalSigning,Matrix.IsTotallyUnimodular}
    \leanok
    $B''$ from Definition~\ref{MatrixSum3.toCanonicalSigning} is TU.
\end{lemma}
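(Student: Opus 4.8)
The plan is to recognize $B''$ as an instance of the matrix family $\mathcal{C}$ from Definition~\ref{MatrixLikeSum3} and then quote the general total unimodularity theorem for that family. Concretely, I would first apply Lemma~\ref{MatrixSum3.IsCanonicalSigning.toMatrixLikeSum3}, which states that $B'' \in \mathcal{C}(X_{\ell}', Y_{\ell}', X_{r}', Y_{r}'; c_{0}'', c_{1}'')$ with $X_{\ell}' = X_{\ell} \setminus \{x_{1}, x_{0}\}$, $X_{r}' = X_{r} \setminus \{x_{2}, x_{1}, x_{0}\}$, $Y_{\ell}' = Y_{\ell} \setminus \{y_{2}\}$, $Y_{r}' = Y_{r} \setminus \{y_{0}, y_{1}\}$, and $c_{0}'' = B''(X_{r}', y_{0})$, $c_{1}'' = B''(X_{r}', y_{1})$. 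This single step already packages all the structural facts about the canonical signing that were proved in the previous subsection: the block form $\begin{bmatrix} A_{\ell}'' & 0 \\ D'' & A_{r}'' \end{bmatrix}$, the total unimodularity of $\begin{bmatrix} A_{\ell}'' \\ D'' \end{bmatrix}$ and of $\begin{bmatrix} c_{0}'' & c_{1}'' & c_{0}'' - c_{1}'' & A_{r}'' \end{bmatrix}$, the "parallel columns" description of $D''$, and the normalizations of $c_{0}''$ and $c_{1}''$.

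Having established membership, I would then conclude immediately by Lemma~\ref{MatrixLikeSum3.isTotallyUnimodular}, which asserts that every matrix in $\mathcal{C}(X_{\ell}', Y_{\ell}', X_{r}', Y_{r}'; c_{0}, c_{1})$ is TU. No additional argument is required: the real content of the statement sits entirely in its two prerequisites, which in turn rest on Lemmas~\ref{MatrixSum3.HasTuBr.cccAr_isTotallyUnimodular}, \ref{MatrixSum3.HasTuBr.dddAl_isTotallyUnimodular}, and~\ref{MatrixSum3.IsCanonicalSigning.Al_D_isTotallyUnimodular}, together with the pivoting/induction argument in the proof of Lemma~\ref{MatrixLikeSum3.isTotallyUnimodular}.

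Accordingly, I do not expect a genuine mathematical obstacle at this step; the only care needed is bookkeeping. One must ensure that the index-set decomposition and the definitions of $c_{0}''$ and $c_{1}''$ produced by Lemma~\ref{MatrixSum3.IsCanonicalSigning.toMatrixLikeSum3} line up exactly with the hypotheses of Lemma~\ref{MatrixLikeSum3.isTotallyUnimodular}. In the Lean formalization the main friction will be reconciling the ambient row and column types of $B''$ (namely $X_{\ell} \cup X_{r}$ and $Y_{\ell} \cup Y_{r}$) with the $X_{\ell}' \cup (X_{r}' \cup \{x_{1}, x_{0}\})$ / $Y_{\ell}' \cup Y_{r}'$ presentation demanded by $\mathcal{C}$ — routine but fiddly rewriting along set identities such as $X_{\ell} \cup X_{r} = X_{\ell}' \cup \{x_{2}, x_{1}, x_{0}\} \cup X_{r}'$. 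The substantive work is already complete in the cited lemmas.
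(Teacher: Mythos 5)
Your proof is correct and matches the paper's approach exactly: it identifies $B''$ as a member of $\mathcal{C}$ via Lemma~\ref{MatrixSum3.IsCanonicalSigning.toMatrixLikeSum3} and then concludes by Lemma~\ref{MatrixLikeSum3.isTotallyUnimodular}. Your added observations about the index-type bookkeeping are accurate but not part of the paper's (one-line) proof.
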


\begin{proof}
    \uses{MatrixSum3.IsCanonicalSigning.toMatrixLikeSum3,MatrixLikeSum3.isTotallyUnimodular}
    \leanok
    Combine the results of Lemmas~\ref{MatrixSum3.IsCanonicalSigning.toMatrixLikeSum3} and~\ref{MatrixLikeSum3.isTotallyUnimodular}.
\end{proof}

\begin{theorem}
    \label{Matroid.IsSum3of.isRegular}
    \uses{Matroid.IsSum3of,Matroid.IsRegular}
    \leanok
    Let $M$ be a $3$-sum of regular matroids $M_{\ell}$ and $M_{r}$. Then $M$ is also regular.
\end{theorem}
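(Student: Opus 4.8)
The plan is to follow the same high-level strategy already used for the $1$- and $2$-sums in Theorems~\ref{Matroid.IsSum1of.isRegular} and~\ref{Matroid.IsSum2of.isRegular}: reduce regularity of $M$ to the existence of a TU signing of a fixed standard $\mathbb{Z}_{2}$ representation, and produce such a signing by combining TU signings of the two summands. Concretely, by Definition~\ref{Matroid.IsSum3of} we may fix standard $\mathbb{Z}_{2}$ representation matrices $B_{\ell}$, $B_{r}$, and $B$ for $M_{\ell}$, $M_{r}$, and $M$ of the form described in Definition~\ref{standardReprSum3}; invoking Lemma~\ref{Matrix.isUnit_2x2} and the accompanying convention, we may assume the $2 \times 2$ block $D_{0}$ is already in one of its two normal forms. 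Since $M_{\ell}$ and $M_{r}$ are regular, Lemma~\ref{StandardRepr.toMatroid_isRegular_iff_hasTuSigning} supplies TU signings $B_{\ell}'$ of $B_{\ell}$ and $B_{r}'$ of $B_{r}$.

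The next step is to pass to the canonical signing. Applying Definition~\ref{Matrix.toCanonicalSigning} to $B_{\ell}'$ and $B_{r}'$ yields canonical re-signings $B_{\ell}''$ and $B_{r}''$; by Lemma~\ref{Matrix.HasTuCanonicalSigning.toCanonicalSigning} these are again TU signings of $B_{\ell}$ and $B_{r}$, and moreover both restrict on the shared $\{x_{2},x_{1},x_{0}\} \times \{y_{0},y_{1},y_{2}\}$ block to the \emph{same} matrix $S'$ of Definition~\ref{matrix3x3signed}. In particular their common corner $D_{0}''$ coincides and is non-singular, which is precisely what makes the canonical signing $B''$ of $B$ from Definition~\ref{MatrixSum3.toCanonicalSigning} well-defined, since the block $D_{\ell r}'' = D_{r}'' \cdot (D_{0}'')^{-1} \cdot D_{\ell}''$ then makes sense.

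It now remains to verify the two defining properties of a TU signing of $B$. That $B''$ is a signing of $B$ is exactly Lemma~\ref{MatrixSum3.HasCanonicalSigning.toCanonicalSigning}, and that $B''$ is totally unimodular is Lemma~\ref{MatrixSum3.IsCanonicalSigning.isTotallyUnimodular}. Hence $B''$ is a TU signing of the standard $\mathbb{Z}_{2}$ representation $B$ of $M$, and a final application of Lemma~\ref{StandardRepr.toMatroid_isRegular_iff_hasTuSigning} gives that $M$ is regular.

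Given the infrastructure assembled above, the theorem itself is a short combination of the cited results, so I do not expect a genuine obstacle at this final level. The real difficulty has been displaced into the preceding development: normalizing the shared $3 \times 3$ block of the two re-signings, establishing the structural constraints on the $D$-block (the $\mathcal{C}(\cdot)$ family of Definition~\ref{MatrixLikeSum3}), and pushing total unimodularity through the pivot induction in Lemmas~\ref{MatrixLikeSum3.shortTableauPivot} and~\ref{MatrixLikeSum3.isTotallyUnimodular}. The one point in the present argument that deserves explicit care is the compatibility of $B_{\ell}''$ and $B_{r}''$ on the overlap, which is what guarantees that $B''$ is well-formed as a $3$-sum and that $D_{\ell r}''$ signs $D_{\ell r}$.
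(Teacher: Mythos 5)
Your proposal is correct and follows essentially the same route as the paper's proof: obtain TU signings of $B_{\ell}$ and $B_{r}$ via Lemma~\ref{StandardRepr.toMatroid_isRegular_iff_hasTuSigning}, pass to the canonical signing $B''$ of $B$ from Definition~\ref{MatrixSum3.toCanonicalSigning}, and conclude by Lemma~\ref{MatrixSum3.HasCanonicalSigning.toCanonicalSigning} (signing) together with Lemma~\ref{MatrixSum3.IsCanonicalSigning.isTotallyUnimodular} (total unimodularity), finishing with one more application of Lemma~\ref{StandardRepr.toMatroid_isRegular_iff_hasTuSigning}. You simply make explicit a few things the paper leaves implicit (the $D_0$ normalization via Lemma~\ref{Matrix.isUnit_2x2} and the compatibility of the two canonical re-signings on the shared $3\times 3$ block).
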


\begin{proof}
    \uses{StandardRepr,Matroid.IsSum3of,StandardRepr.toMatroid_isRegular_iff_hasTuSigning,MatrixSum3.toCanonicalSigning,MatrixSum3.HasCanonicalSigning.toCanonicalSigning,MatrixSum3.IsCanonicalSigning.isTotallyUnimodular,Matrix.isUnit_2x2}
    \leanok
    Let $B_{\ell}$, $B_{r}$, and $B$ be standard $\mathbb{Z}_{2}$ representation matrices from Definition~\ref{Matroid.IsSum3of}. Since $M_{\ell}$ and $M_{r}$ are regular, by Lemma~\ref{StandardRepr.toMatroid_isRegular_iff_hasTuSigning}, $B_{\ell}$ and $B_{r}$ have TU signings. Then the canonical signing $B''$ from Definition~\ref{MatrixSum3.toCanonicalSigning} is a TU signing of $B$. Indeed, $B''$ is a signing of $B$ by Lemma~\ref{MatrixSum3.HasCanonicalSigning.toCanonicalSigning}, and $B''$ is TU by Lemma~\ref{MatrixSum3.IsCanonicalSigning.isTotallyUnimodular}. Thus, $M$ is regular by Lemma~\ref{StandardRepr.toMatroid_isRegular_iff_hasTuSigning}.
\end{proof}

\section{Special Matroids}

\begin{definition}
    \label{Matrix.IsGraphic}
    \uses{Matrix}
    \leanok
    Let $A \in \mathbb{Q}^{X \times Y}$ be a matrix. If for all $j \in Y$, one has that $a_{i,j} = 0$ for all $i \in X$, or that there exists $i_1,i_2 \in X$ such that
    \[
    a_{i,j} = \begin{cases}
        1 & \text{ if $i = i_1$} \\
        -1 & \text{ if $i = i_2$} \\
        0 & \text{ otherwise},
    \end{cases}
    \]
    then we call $A$ a node-incidence matrix for a (directed) graph whose nodes are indexed by $X$ and whose edges are indexed by $Y$.
\end{definition}

\begin{definition}
    \label{Matroid.IsGraphic}
    \uses{VectorMatroid,Matrix.IsGraphic}
    \leanok
    We say that a matroid is graphic if it can be represented by a node-incidence matrix.
\end{definition}

\begin{definition}
    \label{StandardRepr.dual}
    \uses{StandardRepr}
    \leanok
    Let $S$ be a standard representation given by matrix $B$. The dual of $S$ is given by $-B^\intercal$.
\end{definition}

\begin{definition}
    \label{Matroid.IsCographic}
    \uses{Matroid.IsGraphic,StandardRepr.dual}
    \leanok
    We say a matroid is co-graphic if its dual is graphic.
\end{definition}

\begin{definition}
    \label{matroidR10}
    \uses{StandardRepr}
    \leanok
    The matroid with standard representation
        \[\begin{bmatrix}
            1 & 0 & 0 & 1 & 1 \\
            1 & 1 & 0 & 0 & 1 \\
            0 & 1 & 1 & 0 & 1 \\
            0 & 0 & 1 & 1 & 1 \\
            1 & 1 & 1 & 1 & 1 \\
        \end{bmatrix}\]
        over $\mathbb{Z}_2$ is called $R_{10}.$
    \end{definition}

\begin{theorem}
    \label{matroidR10.isRegular}
    \uses{matroidR10,Matroid.IsRegular}
    \leanok
    The matroid $R_{10}$ is regular.
\end{theorem}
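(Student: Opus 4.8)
The plan is to reduce, via Lemma~\ref{StandardRepr.toMatroid_isRegular_iff_hasTuSigning}, to producing a TU signing of the displayed $\mathbb{Z}_2$ standard representation matrix $B$ of $R_{10}$. In fact no sign changes are needed: the $\{0,1\}$ matrix $B$, regarded as a matrix over $\mathbb{Q}$, is already totally unimodular, and hence is a TU signing of itself in the sense of Definition~\ref{Matrix.IsTuSigningOf} (every entry is $0$ or $1$, so $|B(i,j)| = B(i,j)$). Once total unimodularity of $B$ over $\mathbb{Q}$ is established, Lemma~\ref{StandardRepr.toMatroid_isRegular_iff_hasTuSigning} immediately gives that $R_{10}$ is regular. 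In particular $R_{10}$ is a genuine base case: no appeal to the $1$-, $2$-, $3$-sum machinery is required.

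So the whole content is checking that every square submatrix $T$ of $B$ has $\det T \in \{0, \pm 1\}$ (Definition~\ref{Matrix.IsTotallyUnimodular}); since $B$ is $5 \times 5$, this is a finite check over the submatrices of sizes $1$ through $5$. For sizes $1$ and $2$ there is nothing to do, as every $\{0,1\}$ matrix of order at most $2$ has determinant in $\{0,\pm 1\}$. For order $3$, recall that a $\{0,1\}$ matrix of order $3$ has $|\det| = 2$ exactly when its three columns are the three distinct weight-$2$ vectors of $\{0,1\}^3$, and $|\det| \le 1$ otherwise; using that in $B$ the columns $y_1,\dots,y_4$ form a "wheel-like" cyclic pattern (each supported on three coordinates, with the all-ones column $y_5$ and the identity columns all having the wrong support size to participate), one checks that this configuration never arises, so all order-$3$ subdeterminants lie in $\{0,\pm 1\}$. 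The order-$4$ subdeterminants and $\det B$ itself are then evaluated directly, cutting the work with the cyclic symmetry of $B$ that rotates rows $1,\dots,4$ and columns $y_1,\dots,y_4$ while fixing row $5$ and column $y_5$: one finds $\det B = 0$ and every order-$4$ submatrix has determinant $0$ or $\pm 1$.

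In the formalization this verification is most naturally carried out by a decision procedure: the set of square submatrices of the concrete matrix $B$ is finite and their determinants are computable, so the statement "$B$ is TU" is decidable and can be discharged by evaluation, possibly after using Lemma~\ref{Matrix.isTotallyUnimodular_iff_forall_isPartiallyUnimodular} to phrase it as $k$-partial unimodularity for $k \le 5$. The only real obstacle is thus a bounded but slightly tedious computation; there is no conceptual difficulty. An alternative, equally valid route would be to exhibit some other explicit $\{0,\pm 1\}$ signing of $B$ and verify its total unimodularity, but taking the signing to be $B$ itself is the most economical.
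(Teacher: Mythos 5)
The paper itself gives no written proof (it simply defers to the Lean implementation), so there is no written argument to compare against; but your reconstruction is exactly the argument one would expect that implementation to carry out, and it is correct. The reduction via Lemma~\ref{StandardRepr.toMatroid_isRegular_iff_hasTuSigning} to producing a TU signing of the displayed $5\times 5$ matrix $B$ is the only sensible first step, and your key observation --- that $B$ itself, read as a $\{0,1\}$ matrix over $\mathbb{Q}$, is already TU and hence is its own signing --- is true. I checked this: for order~$3$, restricted to any three rows the columns of $B$ never contain three \emph{distinct} weight-$2$ vectors (the only way a $\{0,1\}$ matrix of order~$3$ can have $|\det|=2$), so all $3\times 3$ minors lie in $\{0,\pm 1\}$; for order $\ge 4$ the crucial relations are $R_1 - R_2 + R_3 - R_4 = 0$ and $C_1 - C_2 + C_3 - C_4 = 0$ over $\mathbb{Q}$, which kill any $4\times 4$ submatrix omitting row~$5$ or column~$5$, and the remaining sixteen $4\times 4$ submatrices (four up to the cyclic symmetry you mention) all have determinant $\pm 1$; finally $\det B = 0$ by the row relation. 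Two minor remarks: the sentence ``the columns $y_1,\dots,y_4$ form a wheel-like cyclic pattern'' glosses over the rows involving row~$5$ (where column repetitions, rather than support size, are what rule out the bad $3\times 3$ configuration), so in a careful write-up you would want to say that explicitly; and the decidability observation is indeed how one would discharge this in Lean, ideally via $k$-partial unimodularity for $k\le 5$ as you suggest. No gap --- this is a complete and efficient proof of the base case.
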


\begin{proof}
    \leanok
    See Lean implementation.
\end{proof}

\begin{theorem}
    \label{Matroid.IsGraphic.isRegular}
    \uses{Matroid.IsGraphic,Matroid.IsRegular}
    \leanok
    Every graphic matroid is regular.
\end{theorem}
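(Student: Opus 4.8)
The plan is to reduce the statement to the classical fact that the node-incidence matrix of a directed graph is totally unimodular, and then to invoke Definition~\ref{Matroid.IsRegular} directly, noting that such a matrix is already rational so no change of field is needed.

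First I would unpack the hypothesis: let $M$ be a graphic matroid, so by Definition~\ref{Matroid.IsGraphic} there is a node-incidence matrix $A \in \mathbb{Q}^{X \times Y}$, in the sense of Definition~\ref{Matrix.IsGraphic}, whose vector matroid is $M$. By Definition~\ref{Matroid.IsRegular} it then suffices to prove that $A$ is TU.

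Next I would prove that $A$ is TU straight from Definition~\ref{Matrix.IsTotallyUnimodular}, by strong induction on the size $k$ of a square submatrix $T$ of $A$. The cases $k \le 1$ are immediate, since every entry of $A$ lies in $\{0, \pm 1\}$. For the inductive step, observe that each column of $T$ is a restriction of a column of $A$ and hence has at most one entry equal to $+1$, at most one entry equal to $-1$, and all remaining entries zero. Distinguish cases. If some column of $T$ is entirely zero, then $\det T = 0$. If some column of $T$ has exactly one nonzero entry, which is then $\pm 1$, expand $\det T$ along that column to get $\det T = \pm \det T'$ where $T'$ is a $(k-1) \times (k-1)$ submatrix of $A$; by the induction hypothesis $\det T' \in \{0, \pm 1\}$, so $\det T \in \{0, \pm 1\}$. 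In the remaining case every column of $T$ contains exactly one $+1$ and exactly one $-1$, so the sum of all rows of $T$ is the zero vector, the rows are linearly dependent, and $\det T = 0$. A repeated row or column of $T$ (possible if the submatrix is not injective) also forces $\det T = 0$ and is absorbed into the first two cases; alternatively one can route the whole argument through the $k$-partial-unimodularity characterization of Lemma~\ref{Matrix.isTotallyUnimodular_iff_forall_isPartiallyUnimodular}. This exhausts all cases, so $A$ is TU, and therefore $M$ is regular by Definition~\ref{Matroid.IsRegular}.

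The main obstacle is not conceptual — the incidence-matrix argument is classical — but bookkeeping in the formalization: carrying out the cofactor expansion and the three-way case split cleanly, in particular identifying "$(k-1)\times(k-1)$ submatrix obtained by deleting the pivot row and column" with a submatrix of $A$ so the induction hypothesis applies, and handling the non-injective-submatrix corner cases uniformly, likely by working with the partially-unimodular characterization rather than with submatrices of $A$ directly. A secondary point to verify is that Definition~\ref{Matroid.IsGraphic}'s notion of "represented by a node-incidence matrix" is exactly "the vector matroid of Definition~\ref{VectorMatroid} equals $M$", so that the TU matrix $A$ we produced is the one Definition~\ref{Matroid.IsRegular} asks for.
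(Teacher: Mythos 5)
Your proof is correct and is the standard classical argument: the node-incidence matrix of a directed graph is totally unimodular, proved by induction on the size of a square submatrix $T$, case-splitting on whether some column of $T$ is zero, some column has a single nonzero $\pm 1$ entry (cofactor-expand and recurse), or every column has exactly one $+1$ and one $-1$ (rows sum to zero, so $\det T = 0$). The paper's proof of this theorem is deferred entirely to the Lean implementation, so there is no written argument to compare against, but your approach is the canonical one; the only small imprecision is that a duplicated column of a non-injective submatrix is not literally ``absorbed into the first two cases'' (it may still have one $+1$ and one $-1$), but it forces $\det T = 0$ by linear dependence anyway, and routing through Lemma~\ref{Matrix.isTotallyUnimodular_iff_forall_isPartiallyUnimodular} as you suggest handles it cleanly.
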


\begin{proof}
    \leanok
    See Lean implementation.
\end{proof}

\subsection{Regularity of Cographic Matroids}

\begin{lemma}[Row space of a standard representation]
    \label{lem:row-space-standard}
    Let $X$ and $Y$ be disjoint finite sets and let
    
    \[
      B \in \mathbb{F}_2^{X \times Y}.
    \]

    Consider the matrix
    
    \[ A := [\, \mathbf{1}_x \mid B \,] \in \mathbb{F}_2^{X \times (X \cup Y)},\]
    
    where the columns are indexed by $E := X \cup Y$ and the rows by $X$.
    Then the row space of $A$ is
    
    \[ \text{row}(A) \;=\; \{\, (u,\, uB) \mid u \in \mathbb{F}_2^X \,\} \;\subseteq\; \mathbb{F}_2^X \oplus \mathbb{F}_2^Y \cong \mathbb{F}_2^{E}. \]
\end{lemma}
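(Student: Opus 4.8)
The plan is to unfold the definition of the row space of $A$ as the image of the linear map $\mathbb{F}_2^X \to \mathbb{F}_2^{E}$ that sends a coefficient vector $u$ (viewed as a row vector indexed by $X$) to the linear combination $uA$ of the rows of $A$. Writing $A$ in the block form $[\,\mathbf{1}_X \mid B\,]$, where $\mathbf{1}_X \in \mathbb{F}_2^{X \times X}$ is the identity matrix, multiplication by $u$ distributes over the block decomposition of the columns, giving $uA = [\,u\mathbf{1}_X \mid uB\,] = (u,\, uB)$ under the identification $\mathbb{F}_2^{E} \cong \mathbb{F}_2^X \oplus \mathbb{F}_2^Y$. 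The only computational input is $u\mathbf{1}_X = u$, i.e.\ that multiplication by the identity matrix is the identity, which is immediate.

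From here the proof is a double inclusion. For $\subseteq$: every element of $\mathrm{row}(A)$ has the form $uA$ for some $u \in \mathbb{F}_2^X$, and by the computation above this equals $(u, uB)$, which lies in the right-hand set. For $\supseteq$: given any $u \in \mathbb{F}_2^X$, the vector $(u, uB)$ is exactly the linear combination $uA$ of the rows of $A$ with coefficient vector $u$, hence lies in $\mathrm{row}(A)$. This settles both inclusions and proves the claimed equality of sets.

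The only step requiring any care — and it is mild — is the indexing bookkeeping: one must check that restricting the columns of $A$ to the index set $X$ returns $\mathbf{1}_X$ and restricting to $Y$ returns $B$ (these are the definitional content of the notation $[\,\mathbf{1}_X \mid B\,]$), and that the isomorphism $\mathbb{F}_2^X \oplus \mathbb{F}_2^Y \cong \mathbb{F}_2^{X \cup Y}$, which uses the disjointness of $X$ and $Y$, intertwines the two descriptions of the vector. In a Lean formalization this reindexing and the handling of the block/$\oplus$ decomposition are the only nontrivial part; mathematically the statement reduces to the one-line identity $u[\,\mathbf{1}_X \mid B\,] = (u, uB)$.
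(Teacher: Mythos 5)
Your proof is correct and takes essentially the same approach as the paper: both compute that the linear combination of the rows of $A$ with coefficient vector $u$ equals $(u, uB)$, and conclude by a two-sided inclusion. The paper writes the linear combination as $\sum_x u_x(e_x, B_{x,*})$ while you phrase it as the block matrix product $u[\,\mathbf{1}_X \mid B\,]$, but these are the same computation.
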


\begin{proof}
    The $x$-th row of $A$ is $(e_x, B_{x,*})$, where $e_x$ is the standard basis vector
    in $\mathbb{F}_2^X$ and $B_{x,*}$ is the $x$-th row of $B$.
    A general linear combination of the rows is therefore
    \[
      \sum_{x \in X} u_x (e_x, B_{x,*})
      = \bigl( u,\, \sum_{x \in X} u_x B_{x,*} \bigr)
      = (u,\, uB),
    \]
    where $u = (u_x)_{x \in X} \in \mathbb{F}_2^X$.
    Conversely, every pair $(u, uB)$ arises in this way, so these are exactly the row vectors.
\end{proof}

\begin{lemma}[Orthogonal complement of a standard row space]
    \label{lem:orth-complement-standard}
    Let $A = [\mathbf{1}_x \mid B]$ be as in Lemma~\ref{lem:row-space-standard}, and let
    \[U := \text{row}(A) \subseteq \mathbb{F}_2^{X \cup Y}.\]
    Then the orthogonal complement of $U$ is
    \[U^\perp = \{\, (b B^{\mathsf T},\, b) \mid b \in \mathbb{F}_2^Y \,\}.\]
    Equivalently, if $B^* := -B^{\mathsf T}$, then
    \[U^\perp= \{\, (b B^*,\, b) \mid b \in \mathbb{F}_2^Y \,\}.
    \]
\end{lemma}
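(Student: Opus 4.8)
The plan is to combine the explicit description of $U$ obtained in Lemma~\ref{lem:row-space-standard} with the adjunction property of the transpose and the non-degeneracy of the standard bilinear form on $\mathbb{F}_2^{X \cup Y}$. By that lemma,
\[ U = \{\, (u,\, uB) \mid u \in \mathbb{F}_2^X \,\}, \]
so a vector $(p, q) \in \mathbb{F}_2^X \oplus \mathbb{F}_2^Y$ lies in $U^\perp$ if and only if $u \cdot p + (uB) \cdot q = 0$ for every $u \in \mathbb{F}_2^X$.

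The key step is the identity $(uB) \cdot q = u \cdot (Bq)$, which is exactly the statement that $B^{\mathsf T}$ is the adjoint of $B$ for the standard bilinear forms; entrywise both sides equal $\sum_{i,j} u_i B_{i,j} q_j$, where $Bq \in \mathbb{F}_2^X$ denotes the usual matrix--vector product. Using it, the membership condition becomes $u \cdot (p + Bq) = 0$ for all $u \in \mathbb{F}_2^X$. Since the standard form on $\mathbb{F}_2^X$ is non-degenerate (specialize $u$ to each standard basis vector $e_x$ to read off the coordinates of $p + Bq$), this is equivalent to $p + Bq = 0$, i.e. $p = Bq$.

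It remains to translate this into the stated form: writing $b \in \mathbb{F}_2^Y$ for $q$ viewed as a row vector, we have $Bq = bB^{\mathsf T}$, so $U^\perp = \{\, (bB^{\mathsf T}, b) \mid b \in \mathbb{F}_2^Y \,\}$, which is the first formula. The second formula is then immediate because $-1 = 1$ in $\mathbb{F}_2$, whence $B^* = -B^{\mathsf T} = B^{\mathsf T}$. As a consistency check, $\dim U = |X|$ and $\dim U^\perp = |Y|$, matching $\dim U + \dim U^\perp = |X \cup Y|$. There is no substantial obstacle here; the only points needing care are the row/column bookkeeping and the adjunction identity $(uB)\cdot q = u\cdot(Bq)$, together with the observation that over $\mathbb{F}_2$ the sign in $B^* = -B^{\mathsf T}$ is irrelevant.
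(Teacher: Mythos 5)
Your proof is correct and follows essentially the same route as the paper: both plug in the row-space description from Lemma~\ref{lem:row-space-standard}, move the transpose across the pairing to factor out $u$, invoke non-degeneracy of the standard bilinear form to force the first-coordinate equation, and then note that $-1 = 1$ makes $B^* = B^{\mathsf T}$ over $\mathbb{F}_2$. The only cosmetic difference is that the paper writes the adjunction as $b\cdot(uB)=(bB^{\mathsf T})\cdot u$ while you write it as $(uB)\cdot q = u\cdot(Bq)$; these are the same identity.
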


\begin{proof}
    Write vectors in $\mathbb{F}_2^{X \cup Y}$ as pairs $(a,b)$ with
    $a \in \mathbb{F}_2^X$ and $b \in \mathbb{F}_2^Y$.
    By Lemma~\ref{lem:row-space-standard}, any element of $U$ has the form
    $(u, uB)$ with $u \in \mathbb{F}_2^X$.
    The orthogonality condition $(a,b) \in U^\perp$ means
    \begin{align*}
      0 &= (a,b) \cdot (u, uB) \\ 
        &= a \cdot u + b \cdot (uB) \\
        &= a \cdot u + (b B^{\mathsf T}) \cdot u \\
        &= (a + b B^{\mathsf T}) \cdot u
    \end{align*}
    for all $u \in \mathbb{F}_2^X$.
    Hence we must have $a = b B^{\mathsf T}$, and then
    \[
      U^\perp = \{\, (b B^{\mathsf T}, b) \mid b \in \mathbb{F}_2^Y \,\}.
    \]
    Over $\mathbb{F}_2$ we have $-1 = 1$, so $B^* = -B^{\mathsf T} = B^{\mathsf T}$,
    yielding the alternative description.
\end{proof}

\begin{lemma}[Row space of the dual standard matrix]
    \label{lem:row-space-dual-standard}
    With $B$ and $B^* = -B^{\mathsf T}$ as above, define
    \[A^* := [\, \mathbf{1}_y \mid B^* \,] \in \mathbb{F}_2^{Y \times (X \cup Y)}.\]
    Then
    \[\text{row}(A^*) = U^\perp,\]
    where $U = \text{row}(A)$ and $U^\perp$ is given by
    Lemma~\ref{lem:orth-complement-standard}.
\end{lemma}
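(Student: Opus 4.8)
The plan is to derive the claim directly from Lemma~\ref{lem:row-space-standard}, applied with the roles of $X$ and $Y$ interchanged and with $B$ replaced by $B^* = -B^{\mathsf T}$, and then to match the resulting description against the formula for $U^\perp$ furnished by Lemma~\ref{lem:orth-complement-standard}. No new computation should be required beyond what those two lemmas already provide.

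First I would observe that $A^* = [\, \mathbf{1}_y \mid B^* \,]$ is a standard matrix of exactly the shape treated in Lemma~\ref{lem:row-space-standard}, with the ground set $E = X \cup Y$ written this time with the $Y$-block playing the role of the identity part: the identity block occupies the columns indexed by $Y$, and the matrix $B^* \in \mathbb{F}_2^{Y \times X}$ occupies the columns indexed by $X$. Lemma~\ref{lem:row-space-standard} then yields
\[
  \text{row}(A^*) = \{\, (b,\, bB^*) \mid b \in \mathbb{F}_2^Y \,\},
\]
where the first component lives in $\mathbb{F}_2^Y$ and the second in $\mathbb{F}_2^X$.

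Next I would re-express this set as a subset of $\mathbb{F}_2^X \oplus \mathbb{F}_2^Y \cong \mathbb{F}_2^{X \cup Y}$ by listing the $X$-coordinates first, obtaining $\{\, (bB^*,\, b) \mid b \in \mathbb{F}_2^Y \,\}$. By Lemma~\ref{lem:orth-complement-standard} this is precisely $U^\perp$, which completes the proof. Over $\mathbb{F}_2$ one may freely replace $B^*$ by $B^{\mathsf T}$ throughout, since $-1 = 1$; keeping the notation $B^* = -B^{\mathsf T}$ only serves to align the statement with Definition~\ref{StandardRepr.dual}.

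The only real obstacle here is bookkeeping. One must be scrupulous about how the columns of $A$ and of $A^*$ are indexed by the common ground set $E = X \cup Y$, and about the identification $\mathbb{F}_2^{X \cup Y} \cong \mathbb{F}_2^X \oplus \mathbb{F}_2^Y$, so that ``the identity block of $A^*$'' is genuinely the $Y$-block and the passage from $(b, bB^*)$ to $(bB^*, b)$ is literally the identity on $\mathbb{F}_2^{X \cup Y}$ (a relabelling of which summand is written first) rather than an actual permutation of coordinates. Once the indexing conventions are fixed, the statement follows immediately by combining Lemmas~\ref{lem:row-space-standard} and~\ref{lem:orth-complement-standard}.
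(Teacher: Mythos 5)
Your proof is correct and follows essentially the same path as the paper's: both identify $\text{row}(A^*)$ with $\{(b,\,bB^*) \mid b \in \mathbb{F}_2^Y\}$ and then reorder the two coordinate blocks to the $(X,Y)$ convention so the result matches $U^\perp$ from Lemma~\ref{lem:orth-complement-standard}. The only cosmetic difference is that you obtain the row-space description by invoking Lemma~\ref{lem:row-space-standard} with $X$ and $Y$ interchanged, whereas the paper simply redoes that one-line computation inline.
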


\begin{proof}
    The $y$-th row of $A^*$ is $(e_y, B^*_{y,*})$ with $e_y \in \mathbb{F}_2^Y$.
    A general linear combination of the rows is
    \[\sum_{y \in Y} b_y (e_y, B^*_{y,*}) = \bigl( b,\, b B^* \bigr),
    \]
    where $b = (b_y)_{y \in Y} \in \mathbb{F}_2^Y$.
    Thus
    \[\text{row}(A^*) = \{\, (b, b B^*) \mid b \in \mathbb{F}_2^Y \,\}.\]
    Identifying $\mathbb{F}_2^{X \cup Y}$ as $\mathbb{F}_2^X \oplus \mathbb{F}_2^Y$
    with coordinates ordered as $(X, Y)$, this is exactly the set
    \[\{\, (b B^*, b) \mid b \in \mathbb{F}_2^Y \,\},\]
    which coincides with $U^\perp$ by Lemma~\ref{lem:orth-complement-standard}.
\end{proof}

\begin{lemma}[Dual vector matroid via orthogonal complement]
    \label{lem:dual-vectormatroid-orth}
    Let $A$ and $A'$ be matrices over a field $F$ with the same column index set $E$,
    and suppose
    \[\text{row}(A') = \text{row}(A)^{\perp} \subseteq F^{E}.\]
    Let $M(A)$ and $M(A')$ be the vector matroids represented by $A$ and $A'$. Then
    \[M(A') = M(A)^{*}.\]
\end{lemma}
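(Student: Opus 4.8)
The plan is to identify the independent sets of the vector matroid $M(A')$ directly and recognize them as the independent sets of the dual $M(A)^{*}$. Throughout I work over the finite ground set $E$ (consistent with the standing finiteness convention of the blueprint), write $F^{S}$ for the subspace of $F^{E}$ of vectors supported on $S \subseteq E$, so that $F^{E} = F^{S} \oplus F^{E \setminus S}$ and $(F^{S})^{\perp} = F^{E \setminus S}$ with respect to the standard symmetric bilinear form, and let $\pi_{S} \colon F^{E} \to F^{S}$ be the coordinate projection. I will freely use two standard facts about vector matroids: (i) the ground set of $M(A)$ is its column index set $E$, and a subset $S \subseteq E$ is independent in $M(A)$ iff $\ker(A) \cap F^{S} = 0$, where $\ker(A) = \{v \in F^{E} : A v = 0\} = \operatorname{row}(A)^{\perp}$ (since $A v = 0$ says precisely that $v$ is orthogonal to every row of $A$); and (ii) more generally $\operatorname{rank}_{M(A)}(S) = \dim \pi_{S}(\operatorname{row}(A))$, because the rank of the column submatrix $A[\,\cdot\,, S]$ equals the dimension of the span of its rows, and that span is $\pi_{S}(\operatorname{row}(A))$.

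First I would pin down $\ker(A')$. From the hypothesis $\operatorname{row}(A') = \operatorname{row}(A)^{\perp}$ together with the general identity $\ker(A') = \operatorname{row}(A')^{\perp}$ and the double–orthogonal–complement identity $(\operatorname{row}(A)^{\perp})^{\perp} = \operatorname{row}(A)$ (valid since $F^{E}$ is finite–dimensional and the standard form is non-degenerate), I obtain $\ker(A') = \operatorname{row}(A)$. In particular $\operatorname{rank}(M(A')) = \dim \operatorname{row}(A') = |E| - \dim \operatorname{row}(A) = |E| - \operatorname{rank}(M(A))$, which already matches the rank of the dual matroid.

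Next comes the main computation. For $S \subseteq E$, chaining equivalences via fact (i) applied to $A'$, the identity $\ker(A') = \operatorname{row}(A)$, and $\ker \pi_{E \setminus S} = F^{S}$:
\[
  S \text{ independent in } M(A')
  \iff \ker(A') \cap F^{S} = 0
  \iff \operatorname{row}(A) \cap F^{S} = 0
  \iff \pi_{E \setminus S}\big|_{\operatorname{row}(A)} \text{ injective}.
\]
Since $\pi_{E \setminus S}(\operatorname{row}(A)) = \operatorname{row}\big(A[\,\cdot\,, E \setminus S]\big)$ has dimension $\operatorname{rank}_{M(A)}(E \setminus S)$ by fact (ii), injectivity of $\pi_{E \setminus S}|_{\operatorname{row}(A)}$ is equivalent to $\operatorname{rank}_{M(A)}(E \setminus S) = \dim \operatorname{row}(A) = \operatorname{rank}(M(A))$, i.e.\ to $E \setminus S$ being spanning in $M(A)$. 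Hence $S$ is independent in $M(A')$ precisely when $E \setminus S$ is spanning in $M(A)$, which is exactly the characterization of the independent sets of $M(A)^{*}$ (equivalently, $B$ is a base of $M(A')$ iff $E \setminus B$ is a base of $M(A)$). As $M(A')$ and $M(A)^{*}$ share the ground set $E$ and have the same independent sets, they are equal.

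I expect the main obstacle to be bureaucratic rather than mathematical: in the formalization one must line up the relevant linear-algebra API (row space as the range of the transpose, kernel as the orthogonal complement of the row space, the rank of a column submatrix, and the double–orthogonal–complement theorem in finite dimension) and then reconcile whatever definitional form of the matroid dual is in use — bases as complements of bases, or the rank formula $r^{*}(S) = |S| - r(E) + r(E \setminus S)$ — with the "complement is spanning" description derived above. The only substantive content is the identity $\ker(A') = \operatorname{row}(A)$ combined with fact (ii); everything else is routine.
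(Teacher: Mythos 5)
Your proposal is correct and, importantly, takes a genuinely different and more careful route than the paper. You work at the level of ranks: you identify $\ker(A') = \operatorname{row}(A)$ via the double orthogonal complement, translate independence of $S$ in $M(A')$ into injectivity of the projection $\pi_{E\setminus S}$ restricted to $\operatorname{row}(A)$, and then use $\operatorname{rank}_{M(A)}(E\setminus S) = \dim \pi_{E\setminus S}(\operatorname{row}(A))$ to conclude that $S$ is independent in $M(A')$ iff $E\setminus S$ is spanning in $M(A)$ --- which is exactly the standard characterization of the dual's independent sets. This is airtight. The paper instead tries to argue directly that $F$ is dependent in $M(A)$ iff $E\setminus F$ is dependent in $M(A')$ and calls this ``the defining property of duality''; but that equivalence is not the definition of matroid duality and is in fact false in general (for instance with $A = [1\ 1\ 1]$ representing $U_{1,3}$, $F = \{1,2\}$ is dependent but $\{3\}$ is independent in $U_{2,3}$). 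The specific lapse in the paper's argument is the step from ``$(\sum_i \lambda_i r'_i)|_{E\setminus F} = 0$'' to ``$E\setminus F$ is dependent in $M(A')$'': a nontrivial linear relation among the rows of the column-submatrix $A'[\,\cdot\,, E\setminus F]$ says nothing about dependence of the columns indexed by $E\setminus F$. Your proof sidesteps this entirely by passing through rank and spanning; it is the version one would actually want to formalize, and it makes explicit the two linear-algebra facts (kernel-vs-row-space duality, rank of a column submatrix as the dimension of the projected row space) that carry the argument.
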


\begin{proof}
    Let $F \subseteq E$.  

    \emph{($\Rightarrow$)}  
    Suppose $F$ is dependent in $M(A)$.  
    Then there exists a nonzero vector $c \in F^{F}$ such that $A_F c = 0$.  
    Extend $c$ by zero outside $F$ (still denoted $c$).  
    The condition $A c = 0$ means each row $r$ of $A$ satisfies $r \cdot c = 0$,  
    hence $c \in \text{row}(A)^{\perp} = \text{row}(A')$.  
    Write
    \[
        c = \sum_i \lambda_i r'_i,
    \]
    where the $r'_i$ are rows of $A'$ and not all $\lambda_i$ are zero.  
    For every $e \in E\setminus F$ we have $c_e = 0$, so
    \[
        \left(\sum_i \lambda_i r'_i\right)\big|_{E\setminus F} = 0.
    \]
    Hence the rows of $A'$ indexed by $E \setminus F$ admit a nontrivial
    linear combination giving the zero row, so $E \setminus F$ is dependent in $M(A')$.

    \emph{($\Leftarrow$)}  
    The same argument with $A$ and $A'$ interchanged, using
    $\text{row}(A) = (\text{row}(A')^\perp)$, shows that if
    $E\setminus F$ is dependent in $M(A')$, then $F$ is dependent in $M(A)$.

    Thus
    \[
        F \text{ dependent in } M(A)
        \;\Longleftrightarrow\;
        E \setminus F \text{ dependent in } M(A'),
    \]
    which is the defining property of duality.
\end{proof}

\begin{theorem}[Dual of standard representation corresponds to dual matroid]
\label{thm:StandardRepr.toMatroid_dual}
Let M be a binary matroid on ground set $E = X \cup Y$, with
standard representation $B$ so that
\[A = [\, \mathbf{1}_X \mid B \,].\]
Let $B^{*} := -B^{\mathsf T}$ and
\[A^{*} := [\, \mathbf{1}_Y \mid B^{*} \,].\]
Then $M(A^{*}) = M(A)^{*} = M^{*}$.
\end{theorem}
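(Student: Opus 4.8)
The plan is to chain the three preceding lemmas; there is essentially no new content, only assembly. First, recall that by the definition of a standard representation (Definition~\ref{StandardRepr}) we have $M = M(A)$ for $A = [\,\mathbf{1}_X \mid B\,]$, with everything viewed over the field $\mathbb{F}_2$ and the column index set taken to be $E = X \cup Y$. Since $X$ and $Y$ are finite, $\mathbb{F}_2^E$ is a finite-dimensional $\mathbb{F}_2$-vector space equipped with the standard (non-degenerate) bilinear form, so orthogonal complements behave as usual: $\dim U + \dim U^{\perp} = |E|$ and $(U^{\perp})^{\perp} = U$ for every subspace $U \subseteq \mathbb{F}_2^E$. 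This is what licenses the use of Lemma~\ref{lem:dual-vectormatroid-orth} below.

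Next I would invoke Lemma~\ref{lem:row-space-dual-standard}, which states that $\text{row}(A^{*}) = U^{\perp}$ where $U = \text{row}(A)$ and $A^{*} = [\,\mathbf{1}_Y \mid B^{*}\,]$ with $B^{*} = -B^{\mathsf T}$. That lemma has already taken care of the only genuinely fiddly point, namely that $A$ presents the coordinates of $E$ in the order $(X, Y)$ while $A^{*}$ presents them in the order $(Y, X)$; after the identification it gives exactly $\text{row}(A^{*}) = \text{row}(A)^{\perp}$ inside $\mathbb{F}_2^{E}$.

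Finally, apply Lemma~\ref{lem:dual-vectormatroid-orth} with the field $F = \mathbb{F}_2$, the matrix $A$, and the matrix $A' = A^{*}$: these have the same column index set $E$ and satisfy $\text{row}(A^{*}) = \text{row}(A)^{\perp}$, so the lemma yields $M(A^{*}) = M(A)^{*}$. Combining this with $M(A) = M$ gives $M(A^{*}) = M^{*}$, and the chain $M(A^{*}) = M(A)^{*} = M^{*}$ is precisely the assertion of the theorem. The only thing to verify en route is that the hypotheses of Lemma~\ref{lem:dual-vectormatroid-orth} are actually met over $\mathbb{F}_2$ — in particular that its $(\Leftarrow)$ direction, which implicitly uses $(U^{\perp})^{\perp} = U$, remains valid — and this holds by the finite-dimensionality noted above. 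I do not expect any real obstacle here: all the substance of the statement is already contained in Lemmas~\ref{lem:row-space-standard}, \ref{lem:orth-complement-standard}, \ref{lem:row-space-dual-standard}, and \ref{lem:dual-vectormatroid-orth}, and this proof is pure bookkeeping on top of them.
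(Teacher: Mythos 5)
Your proof is correct and follows essentially the same route as the paper's: invoke Lemma~\ref{lem:row-space-dual-standard} to identify $\text{row}(A^{*}) = \text{row}(A)^{\perp}$ and then apply Lemma~\ref{lem:dual-vectormatroid-orth} to conclude $M(A^{*}) = M(A)^{*} = M^{*}$. Your added remark about finite-dimensionality ensuring $(U^{\perp})^{\perp} = U$ for the $(\Leftarrow)$ direction is a legitimate point the paper leaves implicit, but it does not change the structure of the argument.
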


\begin{proof}
    By Lemma~\ref{lem:row-space-standard} and Lemma~\ref{lem:orth-complement-standard},
    if $U = \text{row}(A)$ then $U^\perp$ has the form
    \[U^\perp = \{\, (b B^*, b) \mid b \in \mathbb{F}_2^Y \,\}.\]
    By Lemma~\ref{lem:row-space-dual-standard}, we have
    \[\text{row}(A^*) = U^\perp = \text{row}(A)^\perp.\]
    Therefore, by Lemma~\ref{lem:dual-vectormatroid-orth},
    the column-matroid $M(A^*)$ is the dual of $M(A)$:
    \[M(A^*) = M(A)^* = M^*.\]
\end{proof}

\begin{lemma}
    \label{lem:Matroid.Dual.isRegular}
    The dual matroid of a regular matroid is also a regular matroid.
\end{lemma}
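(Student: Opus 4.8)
The plan is to reduce the claim to Lemma~\ref{StandardRepr.toMatroid_isRegular_iff_hasTuSigning}, which characterizes regularity of a matroid through the existence of a TU signing of one of its $\mathbb{Z}_2$ standard representations, and then to produce the required signing for the dual by transposing a signing for the original matroid.

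First I would extract from regularity of $M$ an honest $\mathbb{Z}_2$ standard representation of $M$ together with a TU signing. Unfolding Definition~\ref{Matroid.IsRegular} gives a TU matrix $A$ over $\mathbb{Q}$ representing $M$; fixing a base of $M$ and applying Lemma~\ref{VectorMatroid.exists_standardRepr_isBase_isTotallyUnimodular} yields a TU standard representation $S$ of $M$ over $\mathbb{Q}$, and by Lemma~\ref{Matrix.IsTotallyUnimodular.toMatroid_eq_support_toMatroid} the support $S^{\#}$, read over $\mathbb{Z}_2$, is a $\mathbb{Z}_2$ standard representation $B \in \mathbb{Z}_2^{X \times Y}$ of the same matroid $M$. (If the development already records that a regular matroid is binary and thus admits a $\mathbb{Z}_2$ standard representation for any prescribed base, this step is immediate.) Lemma~\ref{StandardRepr.toMatroid_isRegular_iff_hasTuSigning} then supplies a TU signing $B' \in \mathbb{Q}^{X \times Y}$ of $B$, i.e.\ $B'$ is TU and $|B'(i,j)| = B(i,j)$ for all $i \in X$, $j \in Y$.

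Next I would pass to the dual. By Theorem~\ref{thm:StandardRepr.toMatroid_dual}, the matrix $B^{*} := -B^{\mathsf{T}}$ is a $\mathbb{Z}_2$ standard representation of $M^{*}$, now with row index set $Y$ and with the former column set $X$ playing the role of the identity block. I claim $(B')^{\mathsf{T}}$ is a TU signing of $B^{*}$. Total unimodularity is preserved by transposition — a square submatrix of $(B')^{\mathsf{T}}$ is the transpose of a square submatrix of $B'$, and the determinant is transpose-invariant, so Definition~\ref{Matrix.IsTotallyUnimodular} applies directly — hence $(B')^{\mathsf{T}}$ is TU; and $|(B')^{\mathsf{T}}(i,j)| = |B'(j,i)| = B(j,i) = B^{\mathsf{T}}(i,j) = B^{*}(i,j)$, the last equality holding because $-1 = 1$ in $\mathbb{Z}_2$, so the absolute-value condition holds. (If one prefers the signing to literally equal $-(B')^{\mathsf{T}}$, matching the formula $B^{*} = -B^{\mathsf{T}}$, note that negating all rows also preserves total unimodularity by Lemma~\ref{Matrix.IsTotallyUnimodular.mul_rows}.) Applying Lemma~\ref{StandardRepr.toMatroid_isRegular_iff_hasTuSigning} in the converse direction then shows $M^{*}$ is regular.

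The main obstacle I anticipate is not any deep mathematics but the bookkeeping in the first and third steps: moving from the ``$\mathbb{Q}$-valued TU matrix'' formulation of regularity to a genuine $\mathbb{Z}_2$ standard representation to which Lemma~\ref{StandardRepr.toMatroid_isRegular_iff_hasTuSigning} applies, and then tracking the re-partition of the ground set $X \cup Y$ into $Y \cup X$ forced by dualization so that all index sets line up when Theorem~\ref{thm:StandardRepr.toMatroid_dual} is invoked. The preservation of total unimodularity under transposition and row negation, and the matching of absolute values over $\mathbb{Z}_2$, are routine.
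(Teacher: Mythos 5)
Your proof is correct and follows essentially the same route as the paper's: obtain a TU signing $B'$ of a $\mathbb{Z}_2$ standard representation $B$ of $M$ via Lemma~\ref{StandardRepr.toMatroid_isRegular_iff_hasTuSigning}, identify $-B^{\mathsf T}$ as a standard representation of $M^{*}$ via Theorem~\ref{thm:StandardRepr.toMatroid_dual}, observe that transposing (and optionally negating) $B'$ yields a TU signing of it, and conclude by the same lemma. You fill in slightly more detail in justifying the existence of a $\mathbb{Z}_2$ standard representation than the paper does, and you note that over $\mathbb{Z}_2$ the sign on $-B^{\mathsf T}$ is immaterial, but these are refinements of the same argument rather than a different approach.
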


\begin{proof}
    Let $M$ be a regular matroid. We wish to show that $M^{*}$ is also regular. 
    
    Take a standard $ \mathbb Z_2$-representation matrix $B$ of $M$. By Lemma~\ref{StandardRepr.toMatroid_isRegular_iff_hasTuSigning}, since $M$ is regular, there exists a TU signing $B'$ of $B$: $B'$ is a matrix over $\mathbb Q$ that is TU, and $|B'(i,j)| = B(i,j)$ for all entries.  So $M$ is represented (over $\mathbb Q$) by a TU matrix $B'$ whose pattern of zero and non-zero entries is exactly that of $B$.

    From Theorem \ref{thm:StandardRepr.toMatroid_dual}, if a matroid $M$ has standard representation matrix $B$, then its dual $M^{*}$ has the standard representation matrix $B^{*} = -B^{\intercal}$. The TU signing of this dual standard matrix, $(B')^{*} = -(B')^{\intercal}$, preserves total unimodularity, so $(B')^{*}$ is a TU matrix whose support is exactly $B^{*}$.

    Since we have just exhibited a TU signing of $M^{*}$ (i.e., $(B')^{*}$), the dual matroid $M^{*}$ is regular by Lemma~\ref{StandardRepr.toMatroid_isRegular_iff_hasTuSigning}.
\end{proof}

\begin{theorem}
    \label{Matroid.IsCographic.isRegular}
    Every cographic matroid is regular.
\end{theorem}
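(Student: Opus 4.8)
The plan is to derive this as an immediate corollary of two results already available: graphic matroids are regular (Theorem~\ref{Matroid.IsGraphic.isRegular}), and the dual of a regular matroid is regular (Lemma~\ref{lem:Matroid.Dual.isRegular}).

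First I would unfold the definition of cographic. If $M$ is cographic, then by Definition~\ref{Matroid.IsCographic} its dual is graphic, where ``dual'' refers to the matroid obtained from a standard $\mathbb{Z}_2$-representation $B$ of $M$ by passing to $-B^{\intercal}$ (Definition~\ref{StandardRepr.dual}). By Theorem~\ref{thm:StandardRepr.toMatroid_dual}, this matroid is precisely the matroid dual $M^{*}$. So the hypothesis supplies us with the statement that $M^{*}$ is graphic.

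Next, apply Theorem~\ref{Matroid.IsGraphic.isRegular} to conclude that $M^{*}$ is regular. Then apply Lemma~\ref{lem:Matroid.Dual.isRegular} to the regular matroid $M^{*}$: its dual $(M^{*})^{*}$ is regular. Finally, invoke involutivity of matroid duality, $(M^{*})^{*} = M$, to conclude that $M$ is regular, which completes the proof.

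There is no real obstacle here; the only points needing care are (i) matching the ``dual'' of Definition~\ref{Matroid.IsCographic} with the genuine matroid dual via Theorem~\ref{thm:StandardRepr.toMatroid_dual}, and (ii) having the double-dual identity $(M^{*})^{*} = M$ available in the formalization. If one prefers to bypass the double dual, an equivalent route argues directly on standard representations: take a standard $\mathbb{Z}_2$-representation $B$ of $M$, so that $B^{*} = -B^{\intercal}$ represents the graphic, hence (by Theorem~\ref{Matroid.IsGraphic.isRegular}) regular, matroid $M^{*}$; by Lemma~\ref{StandardRepr.toMatroid_isRegular_iff_hasTuSigning}, $B^{*}$ admits a TU signing $C$; then $-C^{\intercal}$ is again TU, since total unimodularity is preserved under transposition and under negating rows (Lemma~\ref{Matrix.IsTotallyUnimodular.mul_rows} applied to the transpose), and since $-B^{\intercal} = B^{\intercal}$ over $\mathbb{Z}_2$ one checks entrywise that $-C^{\intercal}$ is a signing of $B$; hence $B$ has a TU signing and $M$ is regular by Lemma~\ref{StandardRepr.toMatroid_isRegular_iff_hasTuSigning} once more.
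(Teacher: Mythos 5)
Your proposal is correct and follows essentially the same route as the paper: unfold cographic to get that $M^{*}$ is graphic, apply Theorem~\ref{Matroid.IsGraphic.isRegular} to conclude $M^{*}$ is regular, and then apply Lemma~\ref{lem:Matroid.Dual.isRegular} (together with double duality) to conclude $M$ is regular. Your observation that the double-dual identity $(M^{*})^{*}=M$ is implicitly needed, and your alternative route working directly with the TU signing of $B^{*}=-B^{\intercal}$ to sidestep it, are both sound refinements that the paper leaves implicit.
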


\begin{proof}
    We know that all graphic matroids are regular by Theorem~\ref{Matroid.IsGraphic.isRegular}. Recall that we say a matroid is cographic if its dual is graphic. So it suffices to show regularity is preserved under duals, which we showed in Lemma \ref{lem:Matroid.Dual.isRegular}.
\end{proof}
\section{Conclusion}

\begin{definition}
    \label{Matroid.IsGood}
    \uses{Matroid.IsSum1of,Matroid.IsSum2of,Matroid.IsSum3of,Matroid.IsGraphic,Matroid.IsCographic,matroidR10}
    \leanok
    Any graphic matroid is good.
    Any cographic matroid is good.
    Any matroid isomorphic to R10 is good.
    Any 1-sum (in the sense of Definition~\ref{Matroid.IsSum1of}) of good matroids is a good matroid.
    Any 2-sum (in the sense of Definition~\ref{Matroid.IsSum2of}) of good matroids is a good matroid.
    Any 3-sum (in the sense of Definition~\ref{Matroid.IsSum3of}) of good matroids is a good matroid.
\end{definition}

\begin{corollary}
    \label{Matroid.IsGood.isRegular}
    \uses{Matroid.IsGood,Matroid.IsRegular}
    \leanok
    Any good matroid is regular. This is a corollary of the easy direction of the Seymour theorem.
\end{corollary}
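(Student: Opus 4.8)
The plan is to proceed by structural induction on the inductive definition of \emph{good} (Definition~\ref{Matroid.IsGood}). That predicate is generated by exactly six clauses --- three base constructors (graphic, cographic, isomorphic to $R_{10}$) and three closure operations ($1$-, $2$-, and $3$-sums) --- so it suffices to check that regularity holds in each base case and is preserved by each closure operation, and the induction principle of the inductive type then delivers the result for every good matroid.

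For the base cases: if $M$ is graphic, then $M$ is regular by Theorem~\ref{Matroid.IsGraphic.isRegular}; if $M$ is cographic, then $M$ is regular by Theorem~\ref{Matroid.IsCographic.isRegular}; and if $M$ is isomorphic to $R_{10}$, then since $R_{10}$ is regular by Theorem~\ref{matroidR10.isRegular} and regularity depends only on the isomorphism class of a matroid, $M$ is regular. For the inductive steps: if $M$ is a $1$-sum of good matroids $M_{\ell}$ and $M_{r}$, then the induction hypothesis gives that $M_{\ell}$ and $M_{r}$ are regular, whence $M$ is regular by Theorem~\ref{Matroid.IsSum1of.isRegular}; the $2$-sum and $3$-sum cases are handled identically via Theorems~\ref{Matroid.IsSum2of.isRegular} and~\ref{Matroid.IsSum3of.isRegular}. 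This exhausts all constructors, completing the induction.

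The one point requiring a small amount of extra care is the $R_{10}$ clause: unlike the other five, it speaks of matroids merely \emph{isomorphic} to a fixed one rather than produced by an operation already shown to preserve regularity, so the argument needs the auxiliary fact that regularity is invariant under matroid isomorphism. This is routine --- one transports a totally unimodular representation of $R_{10}$ along the relabelling of ground sets induced by the isomorphism, using that total unimodularity and the vector-matroid construction are unaffected by such relabellings --- but it is the only ingredient not already packaged as a theorem in the preceding sections. Everything else in the proof is a direct appeal to results established earlier, so the main work lies entirely in getting this isomorphism-invariance lemma stated and discharged in Lean.
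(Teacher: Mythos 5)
Your proof matches the paper's own argument exactly: the paper's proof is a one-line appeal to ``structural induction using theorems \ref{Matroid.IsGraphic.isRegular}, \ref{Matroid.IsCographic.isRegular}, \ref{matroidR10.isRegular}, \ref{Matroid.IsSum1of.isRegular}, \ref{Matroid.IsSum2of.isRegular}, and \ref{Matroid.IsSum3of.isRegular}.'' Your observation that the $R_{10}$ clause additionally requires isomorphism-invariance of regularity is a correct and useful refinement that the paper's terse proof leaves implicit.
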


\begin{proof}
    \uses{Matroid.IsSum1of.isRegular,Matroid.IsSum2of.isRegular,Matroid.IsSum3of.isRegular,Matroid.IsGraphic.isRegular,Matroid.IsCographic.isRegular,matroidR10.isRegular}
    Structural induction
using theorems \ref{Matroid.IsGraphic.isRegular},
\ref{Matroid.IsCographic.isRegular},
\ref{matroidR10.isRegular},
\ref{Matroid.IsSum1of.isRegular},
\ref{Matroid.IsSum2of.isRegular}, and
\ref{Matroid.IsSum3of.isRegular}.
\end{proof}


\begin{thebibliography}{9}

\bibitem{Truemper}
K.~Truemper,
\emph{Matroid Decomposition},
Leibniz Center for Informatics, 2016.
Available at
\url{https://www2.math.ethz.ch/EMIS/monographs/md/}.

\bibitem{Oxley}
J.~Oxley,
\emph{Matroid Theory},
2nd ed., Oxford University Press, 2011.
\url{https://doi.org/10.1093/acprof:oso/9780198566946.001.0001}.

\bibitem{BruhnDiestelKriesellPendavinghWollan}
H.~Bruhn, R.~Diestel, M.~Kriesell, R.~Pendavingh, and P.~Wollan,
\emph{Axioms for Infinite Matroids},
arXiv preprint, 2013.
Available at
\url{https://arxiv.org/abs/1303.5277}.

\end{thebibliography}
\end{document}